\documentclass[11pt,a4paper]{article}

\usepackage[T1]{fontenc}
\usepackage[utf8]{inputenc}
\usepackage{lmodern}
\usepackage[a4paper,margin=2.6cm,headheight=14pt]{geometry}
\usepackage{microtype}
\usepackage{tikz}
\usetikzlibrary{calc}

\usepackage{amsmath,amssymb,amsthm,mathtools}
\usepackage{mathrsfs}
\usepackage{bm}

\usepackage{enumitem}
\usepackage{booktabs}
\usepackage{array}
\usepackage{tikz}

\usetikzlibrary{
  arrows.meta,
  positioning,
  calc,
  shapes.geometric,
  decorations.pathreplacing
}

\setlist[itemize]{
  leftmargin=2.1em,
  itemsep=0.25em,
  topsep=0.35em
}

\setlist[enumerate]{
  leftmargin=2.4em,
  itemsep=0.25em,
  topsep=0.35em
}

\usepackage{fancyhdr}

\usepackage{xcolor}

\definecolor{linkblue}{RGB}{30,70,120}

\usepackage[
  colorlinks=true,
  linkcolor=linkblue,
  citecolor=linkblue,
  urlcolor=linkblue,
  bookmarksnumbered=true,
  pdfauthor={Chong Shen, Xiaoyong Xi, Weng Kin Ho, Dongsheng Zhao},
  pdftitle={
    Scott Function Spaces under One-Sided FS Assumptions:
    Counterexamples, Positive Results, and New Directions
  }
]{hyperref}

\usepackage[nameinlink,capitalise,noabbrev]{cleveref}

\newtheorem{theorem}{Theorem}[section]
\newtheorem{proposition}[theorem]{Proposition}
\newtheorem{lemma}[theorem]{Lemma}
\newtheorem{corollary}[theorem]{Corollary}
\newtheorem{conjecture}[theorem]{Conjecture}

\theoremstyle{definition}
\newtheorem{definition}[theorem]{Definition}
\newtheorem{construction}[theorem]{Construction}

\newtheorem{question}[theorem]{Question}

\theoremstyle{remark}
\newtheorem{remark}[theorem]{Remark}

\newcommand{\id}{\operatorname{id}}

\newcommand{\FinDir}{\operatorname{FinDir}}
\newcommand{\Idl}{\operatorname{Idl}}
\newcommand{\rank}{\operatorname{rk}}

\newcommand{\below}{\mathord{\ll}}

\newcommand{\dua}{\mathord{\rotatebox[origin=c]{90}{$\twoheadrightarrow$}}}
\newcommand{\dda}{\mathord{\rotatebox[origin=c]{90}{$\twoheadleftarrow$}}}

\newcommand{\waybelowset}[1]{\dda#1}
\newcommand{\wayaboveset}[1]{\dua#1}

\newcommand{\bottom}{\bot}
\newcommand{\topel}{\top}

\newcommand{\Disk}{\mathsf{Disk}}

\title{
  \bfseries
  Scott Function Spaces under One-Sided FS Assumptions:\\
  Counterexamples, Positive Results, and New Directions
}

\author{
  Chong Shen\thanks{School of Science, Beijing University of Post and Telecommunications, Beijing, China, }
  \and
  Weng Kin Ho\thanks{Mathematics and Mathematics Education, National Institute of Education, Nanyang Technological University, Singapore, wengkin.ho@nie.edu.sg}
  \and
  Xiaoyong Xi\thanks{School of Mathematics and Statistics, Yancheng Teachers University, Jiangsu, Yancheng, China, }
  \and
  Dongsheng Zhao\thanks{Mathematics and Mathematics Education, National Institute of Education, Nanyang Technological University, Singapore, dongsheng.zhao@nie.edu.sg}
}

\date{}

\begin{document}

\maketitle

\begin{abstract}
The class of FS-domains is known to be closed under Scott function
spaces when both the source and target are FS-domains. This paper
investigates what remains true under one-sided FS assumptions, with
particular emphasis on the role of Plotkin's tie. We establish two
complementary continuity theorems. First, whenever \(X\) is an
FS-domain, the Scott function space \([X\to T]\) is a continuous dcpo.
The proof introduces finite-layer truncation maps on Plotkin's tie,
which generate directed families of way-below approximants below every
Scott-continuous map. Secondly, whenever \(L\) is an FS-domain, the
Scott function space \([T\to L]\) is again a continuous dcpo. Here the
argument is based on finitely separating approximate identities,
together with a finite-control analysis of the two-branch order
structure of Plotkin's tie. These two approximation mechanisms are
conceptually different but both produce the directed families of
way-below approximants required for continuity.

To determine the limits of these positive results, we consider the
Lawson closed-disk domain. Although \(\Disk^{\top}\) is an FS-domain,
the Scott function space \([\Disk^{\top}\to T]\) is shown to be
continuous but not itself an FS-domain. This establishes that
preservation of continuity is strictly weaker than preservation of the
FS property. The paper concludes by identifying the boundaries of the
present methods and proposing a unified approximation principle that
may provide a general characterization of continuity for Scott function
spaces.
\end{abstract}

\medskip

\noindent
\textbf{Keywords:}
Scott function space;
FS-domain;
RB-domain;
bifinite domain;
Plotkin's tie;
closed-disc domain;
continuous dcpo.

\medskip

\noindent
\textbf{Mathematics Subject Classification (2020):}
06B35, 06F30, 68Q55.


\section{Introduction}
\label{sec:introduction}

Scott function spaces are fundamental to domain theory because they
provide the order-theoretic setting in which higher-order computation
can be interpreted. Given dcpos \(D\) and \(E\), the pointwise ordered
dcpo \([D\to E]\) of Scott-continuous maps represents computations that
take elements of \(D\) as inputs and return elements of \(E\) as
outputs. Closure under Scott function spaces is therefore essential in
the construction of cartesian closed categories of domains and, more
generally, in the solution of recursive domain equations arising in
denotational semantics. However, Scott function-space formation does
not preserve continuity in general. Consequently, a central theme in
domain theory has been to identify subclasses whose approximation
structure is sufficiently stable under the formation of exponentials
\cite{AbramskyJung1994,GoubaultLarrecq2013}.

The search for cartesian closed categories of domains grew out of
Scott's order-theoretic approach to the semantics of computation, in
which continuous domains provide mathematical models for recursively
defined higher-order programs \cite{Scott1970,Scott1972}. This
programme was further advanced by Plotkin's denotational semantics for
the programming language PCF, leading to the introduction of the class
of SFP-domains (Sequentially Generated from Finite Posets), now more
commonly known as bifinite domains \cite{Plotkin1977}. Their rich
finite approximation structure ensures that the Scott function space of
two bifinite domains is again bifinite, making bifinite domains one of
the first successful cartesian closed categories for higher-order
denotational semantics. Their success naturally prompted the search for
larger classes of domains enjoying the same closure property. This
programme culminated in Jung's celebrated classification theorem, which
identifies precisely two maximal cartesian closed full subcategories of
continuous domains, namely the L-domains and the FS-domains. Among
these, FS-domains substantially extend the class of bifinite domains by
replacing finite-image deflations with the more flexible notion of
finitely separated approximations to the identity, while still
satisfying the fundamental closure theorem that, whenever both \(D\)
and \(E\) are FS-domains, the Scott function space \([D\to E]\) is
again an FS-domain \cite{AbramskyJung1994,Jung1990}.

The two-sided closure theorem naturally raises the question of how much
of this conclusion survives when only one of the two domains is assumed
to be an FS-domain. In this direction, Abramsky and Jung proposed the
following statement \cite[Proposition~4.2.10]{AbramskyJung1994}:

\begin{quote}
\emph{If \(D\) is an FS-domain and \(E\) is pointed and continuous,
then \([D\to E]\) is continuous.}
\end{quote}

Immediately following this proposition, the authors remarked that its
proof was ``not only trickier'' but ``as yet unknown'', since the
proposed argument depended on the unresolved question of whether every
FS-domain is a Scott-continuous retract of a bifinite domain,
equivalently, whether every FS-domain is an RB-domain. Thus, despite
the cartesian closedness of the category of FS-domains, the relaxation
from an FS target to an arbitrary pointed continuous target remained
open for more than three decades.

The present paper shows that this assertion is, in fact, false. A
recent breakthrough of Chen, Kou and Lyu established that the class of
FS-domains is strictly larger than that of RB-domains, thereby removing
the key assumption on which the proposed proof strategy depended.
Building upon this development, we construct an explicit counterexample
consisting of an FS-domain \(D\) and a carefully chosen algebraic
domain \(L\) for which the Scott function space \([D\to L]\) fails to
be continuous. This disproves Proposition~4.2.10 of Abramsky and Jung,
thereby settling the one-sided closure problem in the negative.
Nevertheless, this negative answer is only part of the story
\cite{ChenKouLyu2026}.

The counterexample reveals that continuity of Scott function spaces
under one-sided FS assumptions depends in a subtle way on the
interaction between the source and target domains. To investigate this
phenomenon, we turn to Plotkin's tie, a classical non-bifinite
algebraic domain introduced by Plotkin in his study of PCF. We prove
that, for every FS-domain \(D\), both Scott function spaces
\([D\to T]\) and \([T\to D]\) are continuous. These two positive
results are established by fundamentally different approximation
mechanisms, reflecting the distinct roles played by the source and the
target in Scott function-space formation. Together with the
counterexample, they completely resolve the one-sided FS continuity
problem posed implicitly by Proposition~4.2.10: the general statement
is false, yet two natural one-sided classes continue to preserve
continuity.

The remainder of this paper is organized as follows. Section~2 reviews
the necessary background on Scott function spaces, bifinite domains,
RB-domains, and FS-domains. Section~3 introduces the two classical
domains that play central roles throughout the paper, namely Plotkin's
tie and the closed-disk domain. In Section~4, we construct a
counterexample showing that Scott function spaces need not be
continuous under one-sided FS assumptions, thereby disproving
Proposition~4.2.10 of Abramsky and Jung. Sections~5 and~6 establish two
complementary positive results by proving the continuity of the Scott
function spaces \([D\to T]\) and \([T\to D]\), respectively, for every
FS-domain \(D\). Section~7 contrasts the behaviours of the closed-disk
domain and Plotkin's tie, highlighting how the interaction between the
approximation structures of the source and target determines continuity
of Scott function spaces under one-sided FS assumptions. Finally,
Section~8 discusses the consequences of these results, identifies the
boundaries of the present methods, and proposes several open problems
arising from this work.

\section{Preliminaries on Scott Function Spaces and FS-Domains}
\label{sec:preliminaries}

This section briefly reviews the notions and results from domain theory that
will be used throughout the paper. Standard references include
\cite{AbramskyJung1994,GierzEtAl2003,GoubaultLarrecq2013}.

\subsection{Continuous dcpos and Scott function spaces}
\label{subsec:scott-function-spaces}

We briefly recall the notions of continuous dcpos and Scott function spaces
used throughout the paper. Standard references include
\cite{AbramskyJung1994,GierzEtAl2003,GoubaultLarrecq2013}.

A \emph{directed complete partial order} (dcpo) is a partially ordered set
\(D\) in which every directed subset has a supremum. A map
\(f:D\to E\) between dcpos is \emph{Scott-continuous} if it is monotone and
preserves the suprema of directed subsets.

Let \(D\) be a dcpo. An element \(a\in D\) is said to be \emph{way below}
\(x\in D\), written \(a\below x\), if for every directed subset
\(A\subseteq D\) satisfying \(x\le\bigvee A\), there exists \(y\in A\) such
that \(a\le y\).

Following standard notation, we write
\[
\waybelowset{x}
=
\{a\in D:a\below x\},
\qquad
\wayaboveset{x}
=
\{y\in D:x\below y\},
\]
for the way-below and way-above sets of \(x\), respectively. A dcpo \(D\) is
said to be \emph{continuous} if, for every \(x\in D\), the set
\(\waybelowset{x}\) is directed and \(x=\bigvee\waybelowset{x}\).

Given dcpos \(D\) and \(E\), we denote by \([D\to E]\) the dcpo of all
Scott-continuous maps from \(D\) to \(E\), ordered pointwise; that is,
\(f\le g\) whenever \(f(x)\le g(x)\) for every \(x\in D\). Whenever \(D\) and \(E\) are dcpos, the Scott function space
\([D\to E]\) is again a dcpo, with directed suprema computed
pointwise.


\subsection{Bifinite domains}
\label{subsec:bifinite}

Bifinite domains, originally introduced by Plotkin as SFP-domains
(Sequentially Generated from Finite Posets), form one of the earliest
cartesian closed categories in domain theory. They admit several equivalent
characterizations. Among these, we adopt the formulation in terms of
finite-image deflations, as it provides the natural point of departure for the
subsequent notions of RB-domains and FS-domains.

A \emph{deflation} on a dcpo \(D\) is a Scott-continuous map
\(f:D\to D\) satisfying \(f\circ f=f\) and \(f(x)\le x\) for every
\(x\in D\). A deflation is said to be \emph{finite-image} if its image is
finite. A continuous dcpo \(D\) is said to be \emph{bifinite} if there exists
an increasing sequence \((f_n)_{n\ge1}\) of finite-image deflations such that
\(x=\bigvee_{n\ge1}f_n(x)\) for every \(x\in D\).

One of the fundamental results of domain theory asserts that the class of
bifinite domains is cartesian closed; that is, if \(D\) and \(E\) are
bifinite domains, then the Scott function space \([D\to E]\) is again a
bifinite domain
\cite{AbramskyJung1994,GierzEtAl2003}.


\subsection{RB-domains}
\label{subsec:rb}

RB-domains were introduced by Lawson as a natural generalization of bifinite
domains, replacing increasing sequences of finite-image deflations by directed
families.

A continuous dcpo \(D\) is said to be an \emph{RB-domain} if there exists a
directed family \((f_i)_{i\in I}\) of finite-image deflations such that
\(x=\bigvee_{i\in I}f_i(x)\) for every \(x\in D\).

Every bifinite domain is an RB-domain, and every RB-domain is an FS-domain
\cite{AbramskyJung1994,Lawson2008}.


\subsection{FS-domains}
\label{subsec:fs}

FS-domains were introduced by Jung as a further generalization of
RB-domains, replacing finite-image deflations by the broader class of
Scott-continuous maps that are finitely separated from the identity.

Let \(D\) be a dcpo. A Scott-continuous map \(f:D\to D\) is said to be
\emph{finitely separated from the identity} if there exists a finite subset
\(M\subseteq D\) such that, for every \(x\in D\), there exists
\(m\in M\) satisfying \(f(x)\le m\le x\).

A continuous dcpo \(D\) is said to be an \emph{FS-domain} if there exists a
directed family \((f_i)_{i\in I}\) of Scott-continuous maps, each finitely
separated from the identity, such that
\(x=\bigvee_{i\in I}f_i(x)\) for every \(x\in D\).

Jung proved that the resulting class of FS-domains is cartesian closed; that
is, if \(D\) and \(E\) are FS-domains, then the Scott function space
\([D\to E]\) is again an FS-domain
\cite{AbramskyJung1994,Jung1990}.


\subsection{Ideal completion}
\label{subsec:ideal-completion}

Ideal completion provides a canonical method of constructing algebraic
domains from arbitrary posets. It will play a central role in the
construction of the target domain in Section~\ref{subsec:algebraic-target}.

Recall that an \emph{ideal} of a poset \(P\) is a nonempty directed
lower subset of \(P\). We denote by \(\Idl(P)\) the collection of all
ideals of \(P\), ordered by inclusion.

The following classical result is fundamental.

\begin{proposition}[Ideal Completion Theorem]
\label{prop:ideal-completion}
For every poset \(P\), the ideal completion \(\Idl(P)\) is an algebraic
dcpo. Its compact elements are precisely the principal ideals
\[
\downarrow_P p
=
\{q\in P:q\le p\},
\qquad p\in P.
\]
Moreover, directed suprema in \(\Idl(P)\) are given by unions.
\end{proposition}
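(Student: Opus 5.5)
The plan is to verify the three assertions in turn: that \(\Idl(P)\) is a dcpo with directed suprema given by unions, that the principal ideals are exactly the compact elements, and that every ideal is the directed supremum of the compact elements below it.

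\textbf{Step 1: directed suprema are unions.} Let \(\mathcal{A}\subseteq\Idl(P)\) be directed under inclusion and put \(J=\bigcup\mathcal{A}\).
\begin{itemize}
\item \(J\) is nonempty, because \(\mathcal{A}\) is nonempty and its members are nonempty.
\item \(J\) is a lower set, being a union of lower sets.
\item \(J\) is directed. Given \(p,q\in J\), say \(p\in I_1\) and \(q\in I_2\), directedness of \(\mathcal{A}\) gives some \(I_3\in\mathcal{A}\) containing \(I_1\cup I_2\). Since \(I_3\) is directed, it contains an upper bound of \(p\) and \(q\), and that bound lies in \(J\).
\end{itemize}
So \(J\) is an ideal. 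It is clearly the least upper bound of \(\mathcal{A}\) under inclusion. Hence \(\Idl(P)\) is a dcpo whose directed joins are unions.

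\textbf{Step 2: compact elements are the principal ideals.} First, \(\downarrow_P p\) is an ideal, directed because \(p\) is its top element. To see it is compact, suppose \(\downarrow_P p\subseteq\bigcup\mathcal{A}\) for directed \(\mathcal{A}\). Then \(p\) lies in some \(I\in\mathcal{A}\), and since \(I\) is a lower set, \(\downarrow_P p\subseteq I\).

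Conversely, let \(I\) be any ideal. The family \(\{\downarrow_P p: p\in I\}\) is directed, because an upper bound \(r\in I\) of \(p,q\) yields \(\downarrow_P p\cup\downarrow_P q\subseteq\downarrow_P r\). Its union is \(I\), since \(I\) is a lower set.

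Now assume \(I\) is compact, i.e.\ \(I\ll I\). Applying this to the directed family above gives \(I\subseteq\downarrow_P p\) for some \(p\in I\). Together with \(\downarrow_P p\subseteq I\), this gives \(I=\downarrow_P p\).

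\textbf{Step 3: algebraicity.} The same decomposition shows that every ideal is the directed join of the compact elements below it. Indeed, the compact elements below \(I\) are exactly the \(\downarrow_P p\) with \(p\in I\), by Step 2 and the lower-set property. Hence \(\Idl(P)\) is algebraic, and in particular continuous.

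There is no genuine obstacle in this argument. The only point needing care is the directedness of the union in Step 1, which uses the directedness of \(\mathcal{A}\) and of its members together.
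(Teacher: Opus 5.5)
Your proof is correct and complete. It verifies each of the three claims properly: directed joins in \(\Idl(P)\) are unions, the compact elements are exactly the principal ideals, and every ideal \(I\) is the directed union of the family \(\{\downarrow_P p : p\in I\}\), which is the set of compact elements below \(I\). The paper gives no proof of its own and simply cites the result as classical (Abramsky--Jung, Proposition~2.2.22), so there is no alternative route to compare; your argument is the standard direct verification of that fact.
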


This is classical; see
\cite[Proposition~2.2.22]{AbramskyJung1994}.


\subsection{Retracts}
\label{subsec:retracts}

Retracts provide one of the principal mechanisms for transferring
approximation properties between domains. A continuous dcpo \(D\) is
said to be a \emph{Scott-continuous retract} of a continuous dcpo
\(E\) if there exist Scott-continuous maps
\[
s:D\to E,
\qquad
r:E\to D,
\]
such that
\[
r\circ s=\id_D.
\]
The map \(s\) is called a \emph{section}, while
\(r\) is called a \emph{retraction}.

Retracts preserve many important approximation properties. In
particular, every Scott-continuous retract of a bifinite domain is an
RB-domain \cite{AbramskyJung1994,Jung1990}. Moreover, both the classes
of RB-domains and FS-domains are closed under Scott-continuous
retracts. This permanence property will play a crucial role in
Section~\ref{sec:disk-tie-contrast}.

\section{Two Classical Boundary Domains}
\label{sec:boundary-domains}

This section recalls two classical domains that play central roles throughout
the paper. Plotkin's tie serves as the distinguished algebraic domain in our
two positive results, while the closed-disc domain provides the starting point
for the construction of our counterexample. Their contrasting approximation
structures ultimately account for the different behaviours of Scott function
spaces established in the subsequent sections.

\subsection{Plotkin's Tie}
\label{subsec:plotkin-tie}

Plotkin's tie was introduced by Plotkin in his study of the denotational
semantics of the programming language PCF and has since occupied a
distinguished position in domain theory. Despite its remarkably simple order
structure, it exhibits several striking properties. In particular, it is an
algebraic domain that is Lawson compact and coherent, yet fails to be
bifinite. These features make Plotkin's tie an ideal testing ground for
understanding the behaviour of Scott function spaces.

Figure~\ref{fig:plotkin-tie} depicts the Hasse diagram of Plotkin's tie. The
infinite family of crossing cover relations gives rise to the characteristic
``tie'' shape from which the domain derives its name. This deceptively simple
structure is responsible for many of its remarkable order-theoretic and
topological properties.

\begin{figure}[ht]
\centering

\begin{tikzpicture}[
    x=1.45cm,
    y=1.15cm,
    ordernode/.style={
        circle,
        draw,
        fill=white,
        inner sep=0pt,
        minimum size=4.8pt,
        line width=0.55pt
    },
    orderedge/.style={
        line width=0.55pt
    }
]

\node[ordernode] (b) at (0,0) {};

\node[ordernode] (l1) at (-1,1) {};
\node[ordernode] (r1) at ( 1,1) {};

\node[ordernode] (l2) at (-1,2.25) {};
\node[ordernode] (r2) at ( 1,2.25) {};

\node[ordernode] (l3) at (-1,3.50) {};
\node[ordernode] (r3) at ( 1,3.50) {};

\draw[orderedge] (b) -- (l1);
\draw[orderedge] (b) -- (r1);

\draw[orderedge] (l1) -- (l2);
\draw[orderedge] (r1) -- (r2);
\draw[orderedge] (l1) -- (r2);
\draw[orderedge] (r1) -- (l2);

\draw[orderedge] (l2) -- (l3);
\draw[orderedge] (r2) -- (r3);
\draw[orderedge] (l2) -- (r3);
\draw[orderedge] (r2) -- (l3);

\node at (-1,4.35) {\(\vdots\)};
\node at ( 1,4.35) {\(\vdots\)};

\node[ordernode] (t) at (0,5.55) {};

\node[below=3pt] at (b) {\(\bottom\)};
\node[above=3pt] at (t) {\(\topel\)};

\end{tikzpicture}
\caption{Plotkin's tie \(T\).}
\label{fig:plotkin-tie}
\end{figure}

We now give a formal description of Plotkin's tie.

\begin{definition}
\label{def:plotkin-tie}
Plotkin's tie is the poset
\[
T=\{\bottom,\topel\}\cup\{a_n,b_n:n\geq 1\}.
\]
The order on \(T\) is determined by
\[
\bottom\le x\le\topel
\qquad(x\in T),
\]
together with
\[
u_m\le v_n
\Longleftrightarrow
m<n
\text{ or }
(m=n \text{ and }u_m=v_n).
\]
Thus, \(\{a_n,b_n\}\) is a two-element antichain at each level \(n\), and
each element at level \(n\) lies below both elements at every higher level.
\end{definition}

\begin{proposition}
\label{prop:plotkin-tie-properties}
Plotkin's tie \(T\) satisfies the following properties.

\begin{enumerate}
\item
\(T\) is algebraic.

\item
\(T\) is Lawson compact.

\item
\(T\) is coherent.

\item
Although \(T\) is algebraic, it is not bifinite; whence it is not an FS domain.
\end{enumerate}
\end{proposition}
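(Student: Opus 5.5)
We verify the four items in turn, working directly from Definition~\ref{def:plotkin-tie}. Throughout we use one structural fact: every element other than \(\topel\) has a finite principal down-set. Indeed, \(\downarrow a_n=\{\bottom\}\cup\{a_m,b_m:m<n\}\cup\{a_n\}\), and symmetrically for \(b_n\). Consequently every directed set is either finite, and so has a largest element, or it is infinite. An infinite directed set must meet infinitely many levels, so its supremum is \(\topel\). Hence \(T\) is a dcpo.

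\textbf{Algebraicity.} First we show that every \(x\neq\topel\) is compact. Take a directed \(A\) with \(x\le\bigvee A\). If \(A\) has a maximum, that maximum is \(\bigvee A\ge x\) and we are done. Otherwise \(A\) meets arbitrarily high levels, so it contains some element of level greater than that of \(x\), and that element lies above \(x\). Next, \(\topel\) is the directed supremum of the chain \(a_1\le a_2\le\cdots\) of compact elements. It is not itself compact, since it is not below any \(a_n\). This proves (1).

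\textbf{Lawson compactness and coherence.} For (2) we first identify the Scott topology. A set \(U\) is Scott open iff it is an upper set and, whenever \(\topel\in U\), \(U\) contains some level \(n\) element. By upward closure \(U\) then contains every element of level \(\geq n\). The lower topology has subbasic closed sets \(\uparrow x\), which are finite-complement sets or cofinite-level tails. Compactness then follows from a direct argument on an open cover. Some member of the cover contains \(\topel\). If it is Scott open, it already contains a full tail \(\{x:\text{level}(x)\ge n\}\). If it is a basic Lawson open set \(U\setminus\uparrow F\) containing \(\topel\), then \(F=\emptyset\), since every \(\uparrow f\) contains \(\topel\). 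In either case the member omits only finitely many points, and finitely many further members cover those. We will cite the standard alternative only as a cross-check, since algebraicity is already established. For (3) we use the characterization that an algebraic domain is coherent iff the intersection of two compact-generated upper sets \(\uparrow x\cap\uparrow y\) is finitely generated. For \(x,y\) on the same level \(n\) and distinct, \(\uparrow a_n\cap\uparrow b_n=\uparrow a_{n+1}\cup\uparrow b_{n+1}\), which is finitely generated. All other cases are comparable or trivial. Since \(T\) is Lawson compact, this also matches the known equivalence of coherence with compactness of the patch topology.

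\textbf{Failure of bifiniteness and of FS.} This is the main obstacle. The cleanest route uses Plotkin's 2/3 SFP criterion. For a bifinite (even an FS) algebraic domain, every finite set \(F\) of compact elements has a finite mub-closure. Take \(F=\{a_1,b_1\}\). Its minimal upper bounds are \(a_2,b_2\), the minimal upper bounds of \(\{a_2,b_2\}\) are \(a_3,b_3\), and so on. The mub-closure is therefore infinite. This shows \(T\) is not bifinite. For the FS claim, rather than merely cite Jung's result that algebraic FS-domains are exactly the bifinite ones, we will argue directly. Suppose \(f\) is finitely separated from the identity by a finite \(M\) and \(f(\topel)\ge a_n\). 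Choose \(m\in M\) with \(f(\topel)\le m\le\topel\). A directed-family argument then shows that, for \(x\) of large level \(k\), \(f(x)\) must lie below some \(m\in M\cap\downarrow x\). This forces \(f(a_k)\) and \(f(b_k)\) to sit below finitely many compact points. Scott continuity together with \(a_n\le f(\topel)=\bigvee_k f(a_k)\) yields \(a_n\le f(a_k)\) for large \(k\), and by symmetry \(b_n\le f(b_k)\). Combining this with the finiteness of \(M\) and the two-branch symmetry, we aim to derive a contradiction. We expect to simply invoke Jung's theorem that an algebraic FS-domain is bifinite (cf.\ \cite{Jung1990,AbramskyJung1994}), which reduces the FS claim to the non-bifiniteness already proved.
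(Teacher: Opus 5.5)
Your proposal is correct, and for the final clause of (4) it ends up where the paper does. The paper gives no argument for this proposition: it declares the four items classical and cites the standard references. When it later needs the FS part (Theorem~\ref{thm:tie-not-fs-rb}), it uses the reduction you fall back on, namely Jung's theorem that an algebraic FS-domain is bifinite.

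Where you differ is that you actually verify (1)--(3) and non-bifiniteness from Definition~\ref{def:plotkin-tie}:
\begin{itemize}
\item Finiteness of the levels gives the dcpo structure and compactness of every element other than \(\topel\).
\item A basic Lawson neighbourhood \(U\setminus\uparrow F\) of \(\topel\) must have \(F=\emptyset\), so it is a cofinite Scott open set; this gives Lawson compactness in two lines.
\item \(\uparrow a_n\cap\uparrow b_n=\uparrow\{a_{n+1},b_{n+1}\}\) gives coherence.
\item The infinite mub-closure of \(\{a_1,b_1\}\) refutes even the directed-family version of bifiniteness.
\end{itemize}
That buys a self-contained proof at little cost. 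There are two harmless slips, neither of which is used. A Scott open set containing \(a_n\) contains every element of level at least \(n+1\), but not necessarily \(b_n\). And \(\uparrow\topel=\{\topel\}\) is neither of the two kinds of set you list.

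The direct FS argument you sketch and then abandon would not close as set up. Its starting hypothesis \(f(\topel)\ge a_n\) is satisfied by a single finitely separated map. Send \(a_j\) to \(a_j\) for \(j\le n\), \(b_1\) to \(\bottom\), \(b_j\) to \(a_{j-1}\) for \(2\le j\le n\), and every element of level \(>n\), as well as \(\topel\), to \(a_n\). This map is Scott-continuous and below the identity, and its own finite image \(\{\bottom,a_1,\dots,a_n\}\) separates it from \(\id_T\). Collapsing to \(a_{n+1}\) instead also gives \(f(\topel)\ge b_n\), so adding the symmetric condition does not help.

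The contradiction has to come from fixing both branches at once:
\begin{itemize}
\item Since \(a_1,b_1\) are compact and the approximate identity has supremum \(\id_T\), some \(f_i\) fixes both \(a_1\) and \(b_1\).
\item Then \(f_i(a_2)\) is an upper bound of \(\{a_1,b_1\}\) lying below \(a_2\), so \(f_i(a_2)=a_2\). Likewise \(f_i(b_2)=b_2\), and by induction \(f_i\) fixes every \(a_k\) and \(b_k\).
\item Separation at \(a_k\) then forces \(a_k\in M_i\) for every \(k\), so \(M_i\) is infinite.
\end{itemize}
This is your mub-closure argument transported to finitely separated maps, and it would remove the dependence on Jung's theorem altogether.
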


These properties are classical; see
\cite{AbramskyJung1994,GierzEtAl2003,GoubaultLarrecq2013}.

While Plotkin's tie serves as the algebraic domain underlying our two positive
results, the counterexample developed later in this paper arises from a rather
different source, namely the closed-disc domain. We now turn to this second
classical example.


\subsection{The Closed-Disk Domain}
\label{subsec:closed-disc}

The planar closed-disk domain was suggested by Jimmie Lawson and first
recorded by Jung in his classification of continuous domains. Its elements are
the closed disks in the Euclidean plane, together with the whole plane as a
least element, ordered by reverse inclusion. The example was introduced as an
FS-domain with a particularly transparent geometric structure and was already
regarded as a plausible candidate for an FS-domain that might fail to be an
RB-domain. Lawson later placed the construction in a more general setting by
showing that domains of closed formal balls over suitable metric spaces are
FS-domains; the planar closed-disk domain is a special case of this result
\cite{Jung1990,Lawson2008}.

For many years, however, it remained unknown whether the planar closed-disk
domain was an RB-domain. Chen, Kou and Lyu have recently proved that it is not,
thereby confirming Lawson's proposed candidate and establishing that the class
of FS-domains is strictly larger than the class of RB-domains
\cite{ChenKouLyu2026}. Consequently, the closed-disk domain provides the
geometric foundation for the counterexample developed in the next section.

Figure~\ref{fig:closed-disk} illustrates the geometric intuition behind the
closed-disk domain. The disks are ordered by reverse inclusion, so that a
smaller disk represents a larger element in the domain order.

\begin{figure}[ht]
\centering
\begin{tikzpicture}[scale=1]

\draw[line width=0.7pt] (0,0) circle (2);
\draw[line width=0.7pt] (0.2,0.1) circle (1.35);
\draw[line width=0.7pt] (0.35,0.15) circle (0.7);

\fill (0,0) circle (1.1pt);
\fill (0.2,0.1) circle (1.1pt);
\fill (0.35,0.15) circle (1.1pt);

\node at (2.45,1.15) {\(B_1\)};
\node at (1.65,-1.15) {\(B_2\)};
\node at (0.95,-0.35) {\(B_3\)};

\node at (0,-2.55) {\(B_1\supseteq B_2\supseteq B_3\)};
\node at (0,-3.05) {\(B_1\leq B_2\leq B_3\) in the domain order};

\end{tikzpicture}

\caption{Nested closed disks illustrating the reverse-inclusion order on the
closed-disk domain.}
\label{fig:closed-disk}
\end{figure}

\begin{definition}
\label{def:closed-disk}

For \(z\in\mathbb{R}^{2}\) and \(r\geq 0\), let
\[
B(z,r)=\{w\in\mathbb{R}^{2}:\lVert w-z\rVert\leq r\}.
\]
The \emph{planar closed-disk domain}, denoted by \(\Disk\), consists of all
closed Euclidean disks \(B(z,r)\), together with the whole plane
\(\mathbb{R}^{2}\), ordered by reverse inclusion. Thus, for
\(B(z,r),B(w,s)\in\Disk\),
\[
B(z,r)\leq B(w,s)
\quad\Longleftrightarrow\quad
B(z,r)\supseteq B(w,s).
\]

The whole plane \(\mathbb{R}^{2}\) is the least element of \(\Disk\), while
the disks of radius \(0\), equivalently the singleton disks, are precisely
its maximal elements.
\end{definition}

\begin{proposition}
\label{prop:closed-disk-properties}
The planar closed-disk domain \(\Disk\) satisfies the following properties.

\begin{enumerate}
\item
\(\Disk\) is a continuous domain.

\item
\(\Disk\) is an FS-domain.

\item
\(\Disk\) is not an RB-domain.
\end{enumerate}
\end{proposition}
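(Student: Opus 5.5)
The plan is to prove the three items in order. Items 1 and 2 are proved directly from the geometry of disks. Item 3 is not reproved here; it is the recent theorem of Chen, Kou and Lyu, which the paper already cites.

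\emph{Continuity.} The first step is to identify the way-below relation. For disks we claim
\[
B(z,r)\below B(w,s)\iff \lVert z-w\rVert+s<r ,
\]
that is, \(B(w,s)\) lies in the interior of \(B(z,r)\). The least element \(\mathbb{R}^2\) is way below everything.
\begin{itemize}
\item A directed family \(\{B(z_i,r_i)\}\) has as its supremum the intersection \(B(z,r)\), where \(z_i\to z\) and \(r_i\to r\). The intersection is a disk because a decreasing net of disks has centres converging and radii converging.
\item Hence, if \(B(w,s)\) lies in the interior of \(B(z,r)\) and \(B(w,s)\le\bigvee_i B(z_i,r_i)\), then \(B(w,s)\supseteq\bigcap_i B(z_i,r_i)\). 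The strict inequality then forces \(B(z_i,r_i)\subseteq B(z,r)\) for some \(i\).
\item Conversely, a disk tangent internally is not way below: approximate it by the slightly shrinking family \(B(w,s+1/n)\).
\item For a fixed \(B(w,s)\), the set of such \(B(z,r)\) contains the chain \(B(w,s+1/n)\), whose supremum is \(B(w,s)\). It is also directed: two disks containing \(B(w,s)\) in their interiors both lie below \(B(w,s+\varepsilon)\) for small \(\varepsilon\).
\end{itemize}
Adjoining the least element, we conclude that \(\Disk\) is a continuous domain.

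\emph{FS property.} We follow Lawson's formal-ball argument, specialised to the plane. For \(R>0\) and \(\varepsilon>0\), define \(f_{R,\varepsilon}\) on disks as follows:
\begin{itemize}
\item \(f_{R,\varepsilon}(\mathbb{R}^2)=\mathbb{R}^2\);
\item \(f_{R,\varepsilon}(B(z,r))=B(\pi_R(z),\,r+\varepsilon+\lVert z-\pi_R(z)\rVert)\), where \(\pi_R\) is the radial retraction onto the closed ball of radius \(R\), which is 1-Lipschitz.
\end{itemize}
To keep the map continuous at \(\mathbb{R}^2\), we also cap large radii: when \(r\) exceeds \(R\), the value is sent to \(\mathbb{R}^2\) via a continuous cutoff, which we make precise by instead using the radius \(\max(r,\cdot)\) and sending radii \(\ge R\) to \(\mathbb{R}^2\).

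The verification proceeds in four checks.
\begin{itemize}
\item \(f(B)\supseteq B\), since \(\lVert w-\pi_R z\rVert\le\lVert w-z\rVert+\lVert z-\pi_R z\rVert\).
\item \(f\) is monotone, which follows from the Lipschitz property of \(\pi_R\).
\item \(f\) is Scott-continuous, since directed suprema correspond to limits of centres and radii.
\item \(f\) is finitely separated. Choose a finite \(\varepsilon/2\)-net \(N\) of the ball of radius \(R\), together with finitely many radii spaced \(\varepsilon/2\) apart in \([0,R]\). Let \(M\) consist of the disks with centres in \(N\) and radii from this list, plus \(\mathbb{R}^2\). For each \(B\) we pick \(m\in M\) with \(f(B)\le m\le B\), rounding the centre to the net and the radius upward; the slack \(\varepsilon\) absorbs both errors.
\end{itemize}
The family \(f_{R,\varepsilon}\) is directed as \(R\to\infty\) and \(\varepsilon\to0\), and its pointwise supremum is the identity.

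The main obstacle I expect is the behaviour near the bottom element. The cutoff sending large disks to \(\mathbb{R}^2\) must be simultaneously monotone, Scott-continuous, and compatible with separation. If this proves awkward, the fallback is to cite \cite{Jung1990,Lawson2008} for item 2, as the paper does.
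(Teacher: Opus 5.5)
You handle (i) correctly: directed suprema are intersections with convergent centres and radii, $B(z,r)\below B(w,s)\iff\lVert z-w\rVert+s<r$, and the set of disks way below $B(w,s)$ is directed with supremum $B(w,s)$. For (iii) you cite, as the paper does; the paper in fact proves none of the three items and cites Lawson for (i)--(ii) and Chen--Kou--Lyu for (iii).

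The gap is in your direct proof of (ii): the map $f_{R,\varepsilon}$ is not monotone. The $1$-Lipschitz property of $\pi_R$ controls the centres, but the added radius $\lVert z-\pi_R z\rVert$ can grow by more than the available slack. Take $\lVert z\rVert\le R<\lVert z'\rVert$ and put $B_1=B(z,\lVert z-z'\rVert)\le B_2=\{z'\}$. Then $f(B_2)\subseteq f(B_1)$ would require $\lVert z-\pi_R z'\rVert+\lVert \pi_R z'-z'\rVert\le\lVert z-z'\rVert$. That is equality in the triangle inequality, which fails whenever $\pi_R z'$ is not on the segment $[z,z']$. Concretely, $R=1$, $z=(0.6,0.8)$, $z'=(1.1,0)$ gives $\sqrt{0.8}+0.1>\sqrt{0.89}$. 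Here all input radii, and all output radii for small $\varepsilon$, are below $R$, so no cap on radii repairs this.

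Independently, capping only large input radii cannot give finite separation. A small disk with a distant centre gets a proper disk as image, so its separator must be a proper disk containing it. But a finite set of proper disks is bounded, so no finite $M$ works.

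Both defects disappear if you drop $\pi_R$ and cut off by location. A hard cutoff is enough; no ``continuous cutoff'' is needed. Let $U_R=\{B(z,r):\lVert z\rVert+r<R\}$. It is an upper set and, by your own description of directed suprema, Scott open, so its complement is Scott closed. Set $f_{R,\varepsilon}(B(z,r))=B(z,r+\varepsilon)$ for $B(z,r)\in U_R$, and send every other element, including $\mathbb{R}^2$, to $\mathbb{R}^2$. Monotonicity and Scott continuity then reduce to those of $B(z,r)\mapsto B(z,r+\varepsilon)$ on $U_R$, which are immediate.

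For separation, let $M$ consist of $\mathbb{R}^2$ together with the disks $B(n,k\varepsilon/4)$, where $n$ ranges over a finite $\varepsilon/4$-net of $B(0,R)$ and $0\le k\le 4R/\varepsilon+2$. For $B(z,r)\in U_R$, pick $n$ with $\lVert n-z\rVert\le\varepsilon/4$ and $k$ with $k\varepsilon/4\in[r+\varepsilon/4,\,r+\varepsilon/2]$. Then $B(z,r)\subseteq B(n,k\varepsilon/4)\subseteq B(z,r+\varepsilon)$. Your grids of mesh $\varepsilon/2$ leave zero slack, so they must be finer. The family is directed as $R$ increases and $\varepsilon$ decreases, and its supremum is $\id_{\Disk}$.
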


Properties (i) and (ii) are due to Lawson, while property (iii) was
established by Chen, Kou and Lyu
\cite{Lawson2008,ChenKouLyu2026}.

Unlike Plotkin's tie, the closed-disk domain itself plays no role in the positive continuity theorems; rather, it serves as the FS-domain from which the counterexample of Section 4 is constructed.

\section{Failure of Continuity for an FS Source and an Algebraic Target}
\label{sec:counterexample}

In this section, we establish the principal negative result of the paper by
constructing an explicit counterexample to Proposition~4.2.10 of Abramsky and
Jung. Starting from the planar closed-disk domain, we adjoin a compact
greatest element, construct a suitable algebraic target, and exhibit a
Scott-continuous retraction. We then analyse the finite-image approximations
of the corresponding section, culminating in a proof that the associated Scott
function space fails to be continuous.


\subsection{Adjoining a Compact Greatest Element}
\label{subsec:compact-top}

Throughout this section, let \(D\) be a pointed, continuous, non-RB
FS-domain. Our first step is to adjoin a new compact greatest element,
thereby obtaining another pointed domain that retains the approximation
properties of \(D\). This simple construction provides the source domain
used throughout the remainder of the counterexample.

\begin{construction}
\label{construction:compact-top}
Adjoin a new element \(\topel\notin D\), declare \(d\le\topel\) for every
\(d\in D\), and denote the resulting poset by
\[
X=D^{\top}=D\cup\{\topel\}.
\]
\end{construction}

\begin{lemma}
\label{lemma:compact-top-continuous}
The poset \(X=D^{\top}\) is a continuous dcpo, and the newly adjoined
greatest element \(\topel\) is compact.
\end{lemma}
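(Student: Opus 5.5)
We first check that \(X\) is a dcpo, then compute its way-below relation, and finally verify continuity. For directed completeness, let \(A\subseteq X\) be directed. If \(\topel\in A\), then \(\topel\) is the greatest element of \(A\) and hence \(\bigvee A=\topel\). Otherwise \(A\subseteq D\) is directed in \(D\), so it has a supremum \(s=\bigvee_D A\). This \(s\) is also the supremum in \(X\): the upper bounds of \(A\) in \(X\) are exactly the upper bounds in \(D\) together with \(\topel\), and \(s\le\topel\). Thus directed suprema in \(X\) are computed as in \(D\), except that any directed set containing \(\topel\) has supremum \(\topel\).

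Compactness of \(\topel\) follows at once. If \(\topel\le\bigvee A\) for a directed \(A\subseteq X\), then \(\bigvee A=\topel\). By the previous paragraph, a directed subset of \(D\) has its supremum in \(D\), so \(A\) cannot lie inside \(D\). Hence \(\topel\in A\), and so \(\topel\below\topel\).

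Next we describe the way-below relation of \(X\) on elements of \(D\). Let \(a,x\in D\). We claim that \(a\below_X x\) if and only if \(a\below_D x\). Every directed subset of \(D\) is directed in \(X\) with the same supremum, which gives one direction. For the converse, let \(A\subseteq X\) be directed with \(x\le\bigvee A\). If \(\topel\notin A\), we apply \(a\below_D x\) directly. If \(\topel\in A\), then \(\topel\) is itself an element of \(A\) above \(a\), so the condition holds trivially. We also note that every \(a\in D\) satisfies \(a\below_X\topel\), because \(a\le\topel\) and \(\topel\) is compact.

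Continuity now follows by cases. For \(x\in D\), the set \(\waybelowset{x}\) computed in \(X\) coincides with the set computed in \(D\). That set is directed with supremum \(x\) because \(D\) is continuous, and suprema in \(X\) of subsets of \(D\) agree with those in \(D\). For \(x=\topel\), the set \(\waybelowset{\topel}=X\) contains its greatest element \(\topel\), so it is directed with supremum \(\topel\). The step that needs the most care is the agreement of the way-below relations on \(D\), and in particular the case in which the directed set contains \(\topel\). That case is trivial once we note that \(\topel\) dominates \(a\). The one real subtlety is confirming that a directed set not containing \(\topel\) never has supremum \(\topel\), and this is exactly the fact that \(D\) is a dcpo.
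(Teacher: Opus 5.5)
Your proof is correct and follows the paper's argument essentially step for step: the dcpo check splits on whether \(\topel\in A\), \(\topel\) is compact because a directed subset of \(D\) has its supremum in \(D\), and \(a\below_D x\) transfers to \(a\below_X x\). The only difference is that you also prove the converse, so the way-below set of each \(x\in D\) is literally the same in \(X\) as in \(D\) (tacitly using \(\topel\not\le x\)); this lets you verify the stated definition of continuity directly, whereas the paper only shows that each element is a directed supremum of elements way below it.
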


\begin{proof}
Let \(A\subseteq X\) be directed. If \(\topel\in A\), then
\(\sup A=\topel\). Otherwise \(A\subseteq D\), so \(A\) has a supremum in
\(D\), which is also the supremum of \(A\) in \(X\). Hence \(X\) is a
dcpo.

To prove that \(\topel\) is compact, suppose that
\(\topel\le\sup A\). Since \(\topel\) is the greatest element of \(X\),
necessarily \(\sup A=\topel\). If \(\topel\notin A\), then
\(A\subseteq D\), and the previous paragraph implies that
\(\sup A\in D\), a contradiction. Thus \(\topel\in A\), proving
\(\topel\below\topel\).

Now let \(x\in D\), and suppose that \(a\below_D x\). Given a directed
subset \(A\subseteq X\) with \(x\le\sup A\), either \(\topel\in A\), in
which case \(a\le\topel\in A\), or \(A\subseteq D\), in which case
\(a\below_D x\) yields \(a\le d\) for some \(d\in A\). Hence
\(a\below_X x\), and therefore
\(\waybelowset[_D]{x}\subseteq\waybelowset[_X]{x}\).

Since \(D\) is continuous, \(\waybelowset[_D]{x}\) is directed and has
supremum \(x\); therefore \(x\) is also the directed supremum in \(X\) of
elements way below it. The new greatest element satisfies
\(\topel\below\topel\), so it is the supremum of the directed set
\(\{\topel\}\). Hence every element of \(X\) is the directed supremum of
elements way below it, and therefore \(X\) is continuous.
\end{proof}

\begin{corollary}
\label{cor:everything-below-top}
For every \(x\in X\), one has \(x\below\topel\).
\end{corollary}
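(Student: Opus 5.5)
The plan is to unwind the definition of the way-below relation and combine it with the two facts established in Lemma~\ref{lemma:compact-top-continuous}: the adjoined element \(\topel\) is compact, and \(\topel\) is the greatest element of \(X\). Compactness should pass downward to every element of \(X\), and the statement follows from that.

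Fix \(x\in X\) and a directed subset \(A\subseteq X\) with \(\topel\le\sup A\). Since \(\topel\below\topel\) by Lemma~\ref{lemma:compact-top-continuous}, there is some \(y\in A\) with \(\topel\le y\). Because \(x\le\topel\) for every \(x\in X\), transitivity gives \(x\le\topel\le y\), so \(x\le y\) for an element \(y\in A\). This is exactly the defining condition for \(x\below\topel\).

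The argument is just the standard fact that \(a\le b\below c\) implies \(a\below c\), applied with \(b=c=\topel\). The only point needing care is the compactness of \(\topel\), and that was already proved in Lemma~\ref{lemma:compact-top-continuous} by observing that a directed set not containing \(\topel\) lies in \(D\) and so has its supremum in \(D\). So no step here is a real obstacle, and the corollary follows at once from the lemma.
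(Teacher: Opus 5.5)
Your proposal is correct and is essentially the paper's proof: the paper simply cites the auxiliary property of the way-below relation, $x\le\topel\below\topel\Rightarrow x\below\topel$, using the compactness of $\topel$ from Lemma~\ref{lemma:compact-top-continuous}. You give the same argument with the definition unwound.
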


\begin{proof}
Since \(x\le\topel\below\topel\), the auxiliary property of the
way-below relation gives \(x\below\topel\).
\end{proof}

\begin{lemma}
\label{lemma:compact-top-fs}
If \(D\) is an FS-domain, then \(X=D^{\top}\) is also an FS-domain.
\end{lemma}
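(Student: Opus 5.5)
We extend each approximant of \(D\) to \(X\) by sending the new top to itself. Fix a directed family \((f_i)_{i\in I}\) of Scott-continuous maps on \(D\), each finitely separated from the identity, with \(x=\bigvee_{i\in I}f_i(x)\) for all \(x\in D\). For each \(i\), define \(g_i:X\to X\) by
\[
g_i(x)=f_i(x)\ (x\in D),\qquad g_i(\topel)=\topel .
\]
We then verify the three requirements in turn: Scott-continuity of \(g_i\), finite separation, and directedness together with pointwise convergence to the identity. Continuity of \(X\) itself is already supplied by Lemma~\ref{lemma:compact-top-continuous}.

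For Scott-continuity, monotonicity is immediate, since \(g_i(x)\in D\) lies below \(\topel=g_i(\topel)\). Let \(A\subseteq X\) be directed. If \(\topel\in A\), then \(\sup A=\topel\) and also \(\topel=g_i(\topel)\in g_i(A)\), so both sides equal \(\topel\). If \(\topel\notin A\), then \(A\subseteq D\) and, by the proof of Lemma~\ref{lemma:compact-top-continuous}, \(\sup_X A=\sup_D A\in D\). Hence \(g_i(\sup A)=f_i(\sup_D A)=\sup f_i(A)\), and this supremum computed in \(D\) is also the supremum in \(X\).

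For finite separation, if \(M_i\subseteq D\) is a finite separating set for \(f_i\), take \(M_i'=M_i\cup\{\topel\}\). For \(x\in D\) some \(m\in M_i\) satisfies \(f_i(x)\le m\le x\). For \(x=\topel\) we use \(m=\topel\), which gives \(g_i(\topel)=\topel\le\topel\le\topel\).

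Directedness of \((g_i)\) follows from that of \((f_i)\), because all \(g_i\) agree at \(\topel\). The pointwise supremum is \(x\) for \(x\in D\), and it is \(\topel\) at \(\topel\). This finishes the verification, so \(X\) is an FS-domain.

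The only delicate point is the Scott-continuity case \(\topel\notin A\): we must use that directed suprema of subsets of \(D\) stay in \(D\), so that \(\topel\) is never reached by a directed subset of \(D\). This is exactly the compactness of \(\topel\) established in Lemma~\ref{lemma:compact-top-continuous}. Without it, defining \(g_i(\topel)=\topel\) could break continuity, and that is why the new top is adjoined as a compact element.
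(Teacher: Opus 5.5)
Your proof is correct and follows essentially the same route as the paper. Like the paper, you extend each approximant by fixing the new top, use \(M_i\cup\{\topel\}\) as the separating set, and rely on the fact (from Lemma~\ref{lemma:compact-top-continuous}) that directed subsets of \(D\) have their suprema in \(D\) to get Scott-continuity; the only cosmetic difference is that you split cases on \(\topel\in A\) rather than on \(\sup A\in D\).
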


\begin{proof}
Let \((\delta_i)_{i\in I}\) be a directed approximate identity on \(D\),
where each \(\delta_i\) is finitely separated from the identity by a
finite set \(M_i\). Extend each \(\delta_i\) to a map
\(\widehat{\delta_i}:X\to X\) by
\[
\widehat{\delta_i}(x)=
\begin{cases}
\delta_i(x), & x\in D,\\
\topel, & x=\topel.
\end{cases}
\]

The map \(\widehat{\delta_i}\) is clearly monotone. Let \(A\subseteq X\)
be directed. If \(\sup A\in D\), then \(A\subseteq D\), so Scott
continuity follows from that of \(\delta_i\). If \(\sup A=\topel\), then
Lemma~\ref{lemma:compact-top-continuous} shows that \(\topel\in A\), and
hence
\[
\widehat{\delta_i}(\sup A)=\topel
=\sup\widehat{\delta_i}[A].
\]
Thus \(\widehat{\delta_i}\) is Scott-continuous.

For \(x\in D\), the original separator \(M_i\) remains valid, while
\(\topel\) itself separates
\(\widehat{\delta_i}(\topel)=\topel\) from the identity. Hence
\(M_i\cup\{\topel\}\) finitely separates
\(\widehat{\delta_i}\) from \(\mathrm{id}_X\).

Finally, the family \((\widehat{\delta_i})_{i\in I}\) remains directed
and satisfies
\[
\sup_{i\in I}\widehat{\delta_i}
=\mathrm{id}_X.
\]
Therefore \(X\) is an FS-domain.
\end{proof}

\begin{lemma}
\label{lemma:compact-top-nonrb}
If \(D\) is not an RB-domain, then \(X=D^{\top}\) is not an RB-domain.
\end{lemma}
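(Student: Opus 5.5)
We argue by contraposition: assuming \(X=D^{\top}\) is an RB-domain, we show that \(D\) is one. The cleanest route is to realise \(D\) as a Scott-continuous retract of \(X\) and then invoke the permanence property recalled in Section~\ref{subsec:retracts}, namely that RB-domains are closed under Scott-continuous retracts.

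The retraction is built as follows. Let \(s:D\to X\) be the inclusion. It is Scott-continuous, since directed suprema of subsets of \(D\) are computed in \(D\), as shown in the proof of Lemma~\ref{lemma:compact-top-continuous}. Define \(r:X\to D\) by \(r(d)=d\) for \(d\in D\) and \(r(\topel)=\top_D\). This is where the greatest element of \(D\) enters. Monotonicity of \(r\) is immediate. For Scott continuity, let \(A\subseteq X\) be directed. If \(\sup A\in D\), then \(A\subseteq D\) and \(r\) acts as the identity. If \(\sup A=\topel\), then \(\topel\in A\) by compactness of \(\topel\), so \(r(\sup A)=\top_D=\sup r[A]\). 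Hence \(r\circ s=\id_D\), and \(D\) is a Scott-continuous retract of \(X\). The conclusion then follows from the permanence property.

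\textbf{Main obstacle.} The retraction needs \(D\) to have a top element, which is not among the standing hypotheses of pointedness, continuity and the FS property. For example, \(\Disk\) itself has no top. I would therefore first try to avoid \(r\) and work directly with deflations. Given a directed family \((f_i)\) of finite-image deflations on \(X\) with \(\sup_i f_i=\id_X\), one would like to restrict each \(f_i\) to \(D\). The difficulty is that a deflation with \(f_i(d)\le d\) for \(d\in D\) never takes the value \(\topel\) on \(D\). So \(f_i|_D:D\to D\) is automatically a well-defined, Scott-continuous, idempotent, finite-image deflation, and \(\sup_i f_i|_D=\id_D\) holds pointwise. The family \((f_i|_D)\) is still directed, and therefore \(D\) is an RB-domain, contradicting the hypothesis.

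This direct restriction argument in fact needs no top element in \(D\). The only point requiring care is that directed suprema of the restricted maps, computed in \(X\), stay inside \(D\) and coincide with those computed in \(D\); this again follows from Lemma~\ref{lemma:compact-top-continuous}. I therefore expect the restriction argument to be the actual proof, with the retract argument kept only as the alternative for the case where \(D\) happens to have a top.
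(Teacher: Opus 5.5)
Your restriction argument is the paper's proof: since \(f_i(d)\le d<\topel\) for every \(d\in D\), each finite-image deflation on \(X\) restricts to a finite-image deflation on \(D\). The restricted family is directed and has pointwise supremum \(\id_D\), because directed suprema of subsets of \(D\) are the same in \(D\) and in \(X\). You were also right to set the retraction route aside as the main argument, since it needs a greatest element in \(D\), which is not assumed.
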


\begin{proof}
Suppose, to the contrary, that \(X\) is an RB-domain. Then there exists a
directed family of deflations \((q_j)_{j\in J}\) with
\[
\sup_{j\in J}q_j=\mathrm{id}_X.
\]

For every \(x\in D\), the inequality
\(q_j(x)\le x<\topel\) implies that \(q_j(x)\in D\). Thus each
restriction \(q_j|_D:D\to D\) is well defined. Since directed suprema of
subsets of \(D\) are unchanged when regarded in \(X\), each restriction
remains Scott-continuous. Moreover, it has finite image and satisfies
\(q_j|_D\le\mathrm{id}_D\).

The restricted family is still directed, and its pointwise supremum is
\(\mathrm{id}_D\). Hence it witnesses that \(D\) is an RB-domain,
contradicting the standing assumption. Therefore \(X=D^{\top}\) is not
an RB-domain.
\end{proof}

We have therefore obtained a pointed, continuous, non-RB FS-domain
\(X=D^{\top}\). In the next subsection we construct a pointed algebraic
domain \(L\) into which \(X\) admits a Scott-continuous retraction.

\subsection{Constructing an Algebraic Target}
\label{subsec:algebraic-target}

Having constructed the source domain \(X=D^{\top}\), we now turn to the
construction of the target domain. A natural first attempt would be to
take the ideal completion of a basis \(B\) of \(X\). However, this does
not yield the local finiteness property required later in the proof,
since principal ideals of \(B\) may be infinite.

Our solution is to replace individual basis elements by finite directed
subsets of \(B\). The resulting poset retains enough of the approximation
structure of \(X\), while its ideal completion is algebraic by the Ideal
Completion Theorem. More importantly, the compact elements of the ideal
completion have finite principal lower sets, giving precisely the local
finiteness property on which the counterexample ultimately depends.

We begin by explaining what is meant by a finite directed subset.

\paragraph{Finite directed subsets.}

Since the construction of the target domain is based on finite directed
subsets of a basis of \(X\), we first clarify what directedness means in
this setting. Recall that a subset \(F\subseteq X\) is directed if it is
nonempty and every pair of elements of \(F\) has a common upper bound
that already belongs to \(F\). Thus directedness is an intrinsic
property of the ordered set \(F\), rather than of the ambient dcpo
\(X\).

For finite subsets, directedness admits a particularly simple
characterization.

\begin{lemma}
\label{lemma:finite-directed-greatest}
A nonempty finite subset \(F\subseteq X\) is directed if and only if it
has a greatest element.
\end{lemma}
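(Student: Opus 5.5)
We prove the two implications separately. The backward direction is immediate, and the forward direction goes by induction on the size of \(F\). The only facts used are the definition of directedness recalled above and the antisymmetry of the order on \(X\).

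For the backward direction, suppose \(F\) is nonempty and has a greatest element \(g\in F\). Then for any \(x,y\in F\) we have \(x\le g\) and \(y\le g\). So \(g\) is a common upper bound of \(x\) and \(y\) lying in \(F\). Hence \(F\) is directed.

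For the forward direction, suppose \(F\) is directed. We show by induction on \(k\ge 1\) that every nonempty subset \(\{x_1,\dots,x_k\}\subseteq F\) has an upper bound in \(F\).
\begin{itemize}
\item The case \(k=1\) is trivial.
\item For the inductive step, let \(u\in F\) be an upper bound of \(x_1,\dots,x_{k}\). Directedness of \(F\) gives some \(v\in F\) with \(u\le v\) and \(x_{k+1}\le v\). Then \(v\) bounds all of \(x_1,\dots,x_{k+1}\).
\end{itemize}
Since \(F\) is finite and nonempty, apply this to \(F\) itself. This yields \(g\in F\) with \(x\le g\) for every \(x\in F\), so \(g\) is a greatest element of \(F\). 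By antisymmetry this greatest element is unique.

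The only point needing care is that the upper bound must be taken inside \(F\), not in the ambient dcpo \(X\). This is exactly why the definition of directedness was stressed as intrinsic to \(F\). The induction respects this, because every bound it produces is supplied by directedness of \(F\) and therefore lies in \(F\). No step is a real obstacle; finiteness is what lets the induction terminate.
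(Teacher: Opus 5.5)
Your proof is correct and follows essentially the same route as the paper: the backward direction takes the greatest element as the common upper bound in \(F\), and the forward direction builds an upper bound in \(F\) for \(b_1,\dots,b_n\) one element at a time using directedness. Your version simply makes the paper's ``continuing inductively'' into an explicit induction.
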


\begin{proof}
Suppose first that \(F\) is directed. Write
\(F=\{b_1,\ldots,b_n\}\). Since \(F\) is directed, there exists an
element of \(F\) above both \(b_1\) and \(b_2\). Combining this element
with \(b_3\) and continuing inductively, we obtain an element
\(c_F\in F\) satisfying \(b\le c_F\) for every \(b\in F\). Thus \(c_F\)
is the greatest element of \(F\).

Conversely, if \(F\) has a greatest element \(c_F\), then \(c_F\) is a
common upper bound in \(F\) for every pair of elements of \(F\). Hence
\(F\) is directed.
\end{proof}

By Lemma~\ref{lemma:finite-directed-greatest}, henceforth we shall
identify a finite directed subset with a nonempty finite subset having a
greatest element.

For example, \(\{\bot,b\}\) is directed for every \(b\in B\), with
greatest element \(b\). On the other hand,
\(\{\bot,b_1,b_2\}\) need not be directed if neither \(b_1\) nor
\(b_2\) is above the other. If \(c\in B\) satisfies
\(b_1,b_2\le c\), then \(\{\bot,b_1,b_2,c\}\) is directed, with
greatest element \(c\). In particular, since the adjoined element
\(\topel\) belongs to \(B\) and is the greatest element of \(X\),
adjoining \(\topel\) to any finite subset of \(B\) always produces a
finite directed subset.

\begin{construction}
\label{construction:finite-directed-poset}

Let \(B\) be a basis of \(X\) containing both \(\bot\) and
\(\topel\). Define
\[
P
:=
\FinDir_{\bot}(B)
=
\left\{
F\subseteq B:
\begin{array}{l}
F\text{ is finite and directed},\\
\bot\in F
\end{array}
\right\},
\]
ordered by set inclusion.

Thus the order on \(P\) is inclusion between finite subsets of \(B\),
rather than the original order on the elements of \(X\). The condition
\(\bot\in F\) ensures that every member of \(P\) is nonempty and, as we
shall see in the next lemma, gives \(P\) a least element.

Finally, define
\[
L:=\Idl(P).
\]
\end{construction}


\begin{lemma}
\label{lemma:P-directed}
The poset \(P\) has least element \(\{\bot\}\) and is directed.
\end{lemma}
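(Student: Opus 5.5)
The plan is to verify the two claims directly from Construction~\ref{construction:finite-directed-poset}, using Lemma~\ref{lemma:finite-directed-greatest} to recognise members of \(P\) as finite subsets of \(B\) containing \(\bot\) and possessing a greatest element.

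\emph{Least element.} First I will check that \(\{\bot\}\in P\). It is a finite subset of \(B\), since \(\bot\in B\) by hypothesis. It contains \(\bot\), and its only element \(\bot\) is its greatest element, so it is directed by Lemma~\ref{lemma:finite-directed-greatest}. Every \(F\in P\) contains \(\bot\) by definition, so \(\{\bot\}\subseteq F\). Since the order on \(P\) is inclusion, \(\{\bot\}\) is the least element.

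\emph{Directedness.} \(P\) is nonempty because it contains \(\{\bot\}\). Given \(F,G\in P\), I will take
\[
H:=F\cup G\cup\{\topel\}.
\]
This set is finite and contained in \(B\), because \(\topel\in B\) by hypothesis. It contains \(\bot\). Since \(\topel\) is the greatest element of \(X\), it is the greatest element of \(H\), so \(H\) is directed by Lemma~\ref{lemma:finite-directed-greatest}. Thus \(H\in P\), and \(F,G\subseteq H\), so \(H\) is a common upper bound of \(F\) and \(G\) in \(P\).

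No step is a genuine obstacle. The one point needing care is that \(F\cup G\) by itself need not be directed, because the greatest elements of \(F\) and \(G\) may be incomparable. This is why the construction requires \(\topel\in B\), and the plan adjoins \(\topel\) to the union to restore a greatest element.
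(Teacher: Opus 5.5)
Your proof is correct and matches the paper's argument: \(\{\bot\}\) is least because every member of \(P\) contains \(\bot\), and \(F\cup G\cup\{\topel\}\) is a common upper bound in \(P\) because \(\topel\) is its greatest element. You also check explicitly that \(\{\bot\}\in P\) and explain why \(F\cup G\) alone may fail to be directed, points the paper leaves implicit.
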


\begin{proof}
The singleton \(\{\bot\}\) belongs to \(P\), and every \(F\in P\)
contains \(\bot\). Hence \(\{\bot\}\subseteq F\) for every \(F\in P\),
so \(\{\bot\}\) is the least element of \(P\).

Now let \(F,G\in P\). Set
\[
H:=F\cup G\cup\{\topel\}.
\]
Then \(H\) is a finite subset of \(B\), contains \(\bot\), and has
greatest element \(\topel\). By
Lemma~\ref{lemma:finite-directed-greatest}, \(H\) is directed, and hence
\(H\in P\). Since \(F,G\subseteq H\), the elements \(F\) and \(G\) have
a common upper bound in \(P\). Therefore \(P\) is directed.
\end{proof}


\begin{proposition}
\label{prop:algebraic-target}
The dcpo \(L=\Idl(P)\) is a pointed algebraic domain. Its compact
elements are precisely the principal ideals \(\downarrow_P F\), where
\(F\in P\). If \(B\) is countable, then \(L\) is
\(\omega\)-algebraic. Moreover, \(L\) has greatest element
\(\topel_L=P\).
\end{proposition}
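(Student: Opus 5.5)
The statement is essentially an instance of the Ideal Completion Theorem (Proposition~\ref{prop:ideal-completion}) applied to the poset \(P=\FinDir_{\bot}(B)\), supplemented by Lemma~\ref{lemma:P-directed}. I would therefore verify the listed properties one at a time, in the order in which they are stated.

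First, Proposition~\ref{prop:ideal-completion} gives immediately that \(L=\Idl(P)\) is an algebraic dcpo, that its compact elements are exactly the principal ideals \(\downarrow_P F\) with \(F\in P\), and that directed suprema are unions.

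Next I would establish pointedness. By Lemma~\ref{lemma:P-directed}, \(\{\bot\}\) is the least element of \(P\). Every ideal \(I\) of \(P\) is nonempty and a lower set, so it contains \(\{\bot\}\), and hence \(\downarrow_P\{\bot\}=\{\{\bot\}\}\subseteq I\). Thus \(\downarrow_P\{\bot\}\) is the least element of \(L\).

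For the greatest element, Lemma~\ref{lemma:P-directed} shows that \(P\) is directed. It is trivially nonempty and a lower subset of itself, so \(P\in\Idl(P)\). Every ideal of \(P\) is contained in \(P\), so \(P=\topel_L\). (Note that \(\topel_L\) is not compact unless \(P\) has a greatest element; this is not claimed, so nothing more is needed.)

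The only step requiring a small argument is \(\omega\)-algebraicity when \(B\) is countable, and I expect it to be the main, though minor, obstacle. Since the finite subsets of a countable set form a countable family, \(P\) is countable. The compact elements \(\downarrow_P F\) then form a countable basis. By algebraicity, each ideal \(I\) is the directed union of \(\{\downarrow_P F : F\in I\}\), so these countably many compact elements form a basis. Hence \(L\) is \(\omega\)-algebraic.
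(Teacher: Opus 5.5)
Your proposal is correct and follows the paper's proof essentially step for step. It uses the Ideal Completion Theorem for algebraicity and the compact elements, Lemma~\ref{lemma:P-directed} for the least element \(\downarrow_P\{\bot\}\) and the top \(P\), and countability of the finite subsets of \(B\) for \(\omega\)-algebraicity; your explicit check that every ideal contains \(\{\bot\}\) just fills in a detail the paper leaves implicit.
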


\begin{proof}
By Proposition~\ref{prop:ideal-completion}, the ideal completion
\(L=\Idl(P)\) is an algebraic dcpo, and its compact elements are
precisely the principal ideals \(\downarrow_P F\), where \(F\in P\).

By Lemma~\ref{lemma:P-directed}, the poset \(P\) has least element
\(\{\bot\}\). Hence its principal ideal
\[
\downarrow_P\{\bot\}
=
\bigl\{\{\bot\}\bigr\}
\]
is the least element of \(L\). Therefore \(L\) is pointed.

If \(B\) is countable, then the collection of all finite subsets of
\(B\) is countable. Since \(P\) is a subcollection of this collection,
\(P\) is countable. Consequently,
\(\{\downarrow_P F:F\in P\}\) is a countable basis of compact elements
for \(L\), and hence \(L\) is \(\omega\)-algebraic.

Finally, Lemma~\ref{lemma:P-directed} shows that \(P\) is directed.
Moreover, \(P\) is nonempty and is trivially a lower subset of itself.
Thus \(P\) is an ideal of the poset \(P\), and hence an element of
\(L=\Idl(P)\). Since every ideal of \(P\) is contained in \(P\), this
ideal is the greatest element of \(L\). We therefore write
\(\topel_L=P\).
\end{proof}

\begin{lemma}[Finite principal lower sets below the top]
\label{lemma:finite-principal-lowersets}
If \(a\below\topel_L\), then the principal lower set
\[
\downarrow_L a
=
\{I\in L:I\subseteq a\}
\]
is finite.
\end{lemma}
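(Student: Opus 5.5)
Since \(L=\Idl(P)\) is algebraic by Proposition~\ref{prop:algebraic-target}, the plan is to reduce the way-below relation in \(L\) to compact elements and then count subsets. If \(a\below\topel_L\), then because \(\topel_L=P\) is the directed union of the principal ideals \(\downarrow_P F\), \(F\in P\), there is some \(F\in P\) with \(a\subseteq\downarrow_P F\). Then every \(I\in\downarrow_L a\) also satisfies \(I\subseteq\downarrow_P F\). It therefore suffices to show that for each fixed \(F\in P\) the set \(\{I\in\Idl(P):I\subseteq\downarrow_P F\}\) is finite.

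The key observation is that \(\downarrow_P F\) is finite. Since the order on \(P\) is set inclusion, we have
\[
\downarrow_P F=\{G\in P:G\subseteq F\}\subseteq\mathcal{P}(F),
\]
and \(F\) is a finite subset of \(B\), so \(\downarrow_P F\) has at most \(2^{|F|}\) elements. Every ideal \(I\subseteq\downarrow_P F\) is a subset of this finite set. Hence there are at most \(2^{2^{|F|}}\) such ideals, which gives the required finiteness of \(\downarrow_L a\).

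One can say more: every ideal of a finite poset is directed and finite, so it has a greatest element and is principal. Thus each such \(I\) equals \(\downarrow_P G\) for some \(G\subseteq F\) with \(G\in P\). This refinement is not needed for finiteness, but it identifies \(\downarrow_L a\) with a subposet of the finite lattice of subsets of \(F\).

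The only step needing care is the first, extracting \(F\) from \(a\below\topel_L\). Here we use Proposition~\ref{prop:ideal-completion}: directed suprema in \(L\) are unions, and \(P=\bigcup_{F\in P}\downarrow_P F\) is a directed union because \(P\) is directed (Lemma~\ref{lemma:P-directed}). The definition of \(\below\) then gives a single principal ideal containing \(a\). The remaining steps are routine counting, so we do not expect a genuine obstacle.
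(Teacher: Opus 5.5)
Your proposal is correct and is essentially the paper's proof. Both arguments extract \(F\in P\) with \(a\subseteq\downarrow_P F\) from the directed union \(\topel_L=P=\bigcup_{F\in P}\downarrow_P F\), note that \(\downarrow_P F=\{G\in P:G\subseteq F\}\) is finite, and conclude that every \(I\in\downarrow_L a\) is a subset of this fixed finite set; your extra remark that each such ideal is principal is also correct but, as you say, not needed.
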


\begin{proof}
By Proposition~\ref{prop:algebraic-target},
\(\topel_L=P\). Since \(P\) is directed, the family of principal ideals
\(\{\downarrow_PF:F\in P\}\) is directed in \(L\). By
Proposition~\ref{prop:ideal-completion}, its supremum is their union,
namely
\[
\bigcup_{F\in P}\downarrow_PF
=
P
=
\topel_L.
\]

Since \(a\below\topel_L\), there exists \(F\in P\) such that
\(a\subseteq\downarrow_PF\). Moreover,
\[
\downarrow_PF
=
\{E\in P:E\subseteq F\}.
\]
Because \(F\) is finite, it has only finitely many subsets. Hence
\(\downarrow_PF\) is finite.

Now let \(I\in\downarrow_La\). Then
\(I\subseteq a\subseteq\downarrow_PF\). Hence every element of
\(\downarrow_La\) is a subset of the fixed finite set
\(\downarrow_PF\). Therefore \(\downarrow_La\) is finite.
\end{proof}

The source domain \(X\) and the algebraic target domain \(L\) have now
been constructed. We next connect them by constructing
Scott-continuous maps \(s:X\to L\) and \(r:L\to X\), and prove that
\(r\circ s=\id_X\). This relationship will allow the local finiteness
of \(L\) established above to be transferred back to the source domain
\(X\), ultimately leading to the desired counterexample.

\subsection{A Scott-Continuous Retraction}
\label{subsec:retraction}

Having constructed the algebraic target domain \(L\), we now connect it
to the source domain \(X\). The section map records all finite directed
approximations to an element of \(X\) arising from the chosen basis,
while the retraction reconstructs an element of \(X\) by taking the
supremum of the basis elements belonging to an ideal. We shall prove
that both maps are Scott-continuous and satisfy \(r\circ s=\id_X\).


\begin{construction}
\label{construction:section-retraction}

For each \(x\in X\), let \(s(x)\) consist of all finite directed
subsets of the chosen basis whose elements approximate \(x\); that is,
\[
s(x)
:=
\{F\in P:F\subseteq B\cap\waybelowset{x}\}.
\]

For each ideal \(I\in L\), define \(r(I)\) to be the supremum of all
basis elements appearing in the members of \(I\):
\[
r(I)
:=
\bigvee_X\bigcup I.
\]
\end{construction}


\begin{lemma}
\label{lemma:sx-ideal}
For every \(x\in X\), the set \(s(x)\) is an ideal of \(P\).
Consequently, the assignment in
Construction~\ref{construction:section-retraction} defines a map
\(s:X\to L\).
\end{lemma}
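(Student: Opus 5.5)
The proof checks directly that \(s(x)\) meets the three defining conditions of an ideal of \(P\): it is nonempty, it is a lower set, and it is directed. The first two are immediate from the definitions. Directedness reduces, via Lemma~\ref{lemma:finite-directed-greatest}, to the directedness of \(B\cap\waybelowset{x}\).

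\emph{Nonemptiness.} I will show \(\{\bot\}\in s(x)\). Since \(\bot\) is the least element of \(X\), every directed \(A\) with \(x\le\sup A\) contains some \(y\ge\bot\). Hence \(\bot\below x\) for every \(x\in X\). As \(\bot\in B\) by Construction~\ref{construction:finite-directed-poset}, we get \(\{\bot\}\subseteq B\cap\waybelowset{x}\). Together with \(\{\bot\}\in P\) (Lemma~\ref{lemma:P-directed}), this gives \(\{\bot\}\in s(x)\).

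\emph{Lower set.} Let \(E\subseteq F\) with \(E\in P\) and \(F\in s(x)\). Then \(E\subseteq F\subseteq B\cap\waybelowset{x}\), so \(E\in s(x)\). This is immediate because the order on \(P\) is inclusion.

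\emph{Directedness.} This is the only step with content. Let \(F,G\in s(x)\). By Lemma~\ref{lemma:finite-directed-greatest} they have greatest elements \(c_F\) and \(c_G\), and both lie in \(B\cap\waybelowset{x}\). I will use the standard fact that for a basis \(B\) of a continuous dcpo, the set \(B\cap\waybelowset{x}\) is directed with supremum \(x\). It yields some \(c\in B\cap\waybelowset{x}\) with \(c_F,c_G\le c\). (In the special case \(x=\topel\) one can simply take \(c=\topel\), since \(\topel\in B\) and \(\topel\below\topel\) by Lemma~\ref{lemma:compact-top-continuous}.)

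Now set \(H:=F\cup G\cup\{c\}\). This set is finite, contains \(\bot\), and lies inside \(B\cap\waybelowset{x}\). Every element of \(F\cup G\) is below \(c_F\) or \(c_G\), hence below \(c\), so \(c\) is the greatest element of \(H\). By Lemma~\ref{lemma:finite-directed-greatest}, \(H\) is directed, so \(H\in P\) and therefore \(H\in s(x)\). Since \(F,G\subseteq H\), the pair \(F,G\) has an upper bound in \(s(x)\).

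The main point needing care is the directedness of \(B\cap\waybelowset{x}\). If one takes the definition of basis to be that \(B\cap\waybelowset{x}\) contains a directed set with supremum \(x\), I would argue as follows. Interpolation in continuous dcpos gives \(c_F\below y_1\below x\) and \(c_G\below y_2\below x\), and one may take \(y_1,y_2\in B\). Since \(\waybelowset{x}\) is directed, there is \(y\below x\) above both. Then \(y\) lies below the supremum of that directed subset of \(B\cap\waybelowset{x}\), so \(y\le b\) for some \(b\) in it, and \(c:=b\) works.

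Having shown that \(s(x)\) is a nonempty directed lower subset of \(P\), it is an element of \(\Idl(P)=L\), so \(s\) is a well-defined map \(X\to L\).
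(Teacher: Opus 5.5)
Your proof is correct and follows the paper's argument essentially verbatim: nonemptiness via \(\{\bot\}\), the lower-set property by inclusion, and directedness by adjoining an upper bound \(c\in B\cap\waybelowset{x}\) to \(F\cup G\). The only cosmetic difference is that you bound the greatest elements \(c_F,c_G\) rather than all of \(F\cup G\). Your extra justification that \(B\cap\waybelowset{x}\) is directed, which the paper simply asserts from \(B\) being a basis, is also sound; the interpolation step in it is superfluous, since directedness of \(\waybelowset{x}\) already gives some \(y\below x\) above \(c_F\) and \(c_G\).
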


\begin{proof}
First, \(\bot\below x\) and \(\bot\in B\), so
\(\{\bot\}\subseteq B\cap\waybelowset{x}\). Since
\(\{\bot\}\in P\), it follows that \(\{\bot\}\in s(x)\). Thus \(s(x)\)
is nonempty.

Next, suppose that \(F\in s(x)\) and \(E\in P\) with \(E\subseteq F\).
Then
\[
E\subseteq F\subseteq B\cap\waybelowset{x},
\]
so \(E\in s(x)\). Hence \(s(x)\) is a lower subset of \(P\).

Finally, let \(F,G\in s(x)\). Since \(B\) is a basis,
\(B\cap\waybelowset{x}\) is directed. As \(F\cup G\) is finite, there
exists \(c\in B\cap\waybelowset{x}\) such that \(b\le c\) for every
\(b\in F\cup G\). Set
\[
H:=F\cup G\cup\{c\}.
\]
Then \(H\) is finite, contains \(\bot\), and has greatest element \(c\).
By Lemma~\ref{lemma:finite-directed-greatest}, \(H\) is directed, so
\(H\in P\). Moreover,
\(F,G\subseteq H\subseteq B\cap\waybelowset{x}\), and hence
\(H\in s(x)\). Therefore \(s(x)\) is directed.

Thus \(s(x)\) is a nonempty directed lower subset of \(P\), and hence an
ideal.
\end{proof}


\begin{lemma}
\label{lemma:s-scott-continuous}
The map \(s:X\to L\) is Scott-continuous.
\end{lemma}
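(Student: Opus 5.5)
We must show that \(s\) is monotone and preserves directed suprema. Since directed suprema in \(L=\Idl(P)\) are unions (Proposition~\ref{prop:ideal-completion}), preserving directed suprema amounts to proving \(s(\sup A)=\bigcup_{a\in A}s(a)\) for every directed \(A\subseteq X\).

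\textbf{Monotonicity.} Let \(x\le y\) in \(X\). Every \(b\below x\) satisfies \(b\below y\), so \(B\cap\waybelowset{x}\subseteq B\cap\waybelowset{y}\). Hence any \(F\in s(x)\) also lies in \(s(y)\), and \(s(x)\subseteq s(y)\). Applied to a directed \(A\) with \(x:=\sup A\), this gives the inclusion \(\bigcup_{a\in A}s(a)\subseteq s(x)\). This union is a directed union of ideals, so it is an element of \(L\).

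\textbf{Reverse inclusion.} This is the main step. Let \(F\in s(x)\), so \(F\) is a finite subset of \(B\cap\waybelowset{x}\) with greatest element \(c_F\) (Lemma~\ref{lemma:finite-directed-greatest}).
\begin{enumerate}
\item Since \(c_F\below x\), interpolation in the continuous dcpo \(X\) (Lemma~\ref{lemma:compact-top-continuous}) yields some \(d\) with \(c_F\below d\below x=\sup A\).
\item Because \(d\below\sup A\), there is \(a\in A\) with \(d\le a\).
\item Then \(c_F\below d\le a\) gives \(c_F\below a\). Every \(b\in F\) satisfies \(b\le c_F\), so \(b\below a\) as well.
\end{enumerate}
Thus \(F\subseteq B\cap\waybelowset{a}\), that is, \(F\in s(a)\).

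The only delicate point is the use of interpolation. We need one element of \(A\) that \emph{strictly} approximates all of \(F\). Without interpolation we would only obtain \(c_F\le a\), which does not give \(c_F\below a\). Interpolation is available because \(X\) is continuous.

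Combining the two inclusions gives \(s(\sup A)=\bigcup_{a\in A}s(a)=\sup_L s[A]\). Together with monotonicity, this shows that \(s\) is Scott-continuous.
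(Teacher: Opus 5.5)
Your proof is correct and follows essentially the same route as the paper. You get monotonicity from the auxiliary property, and you get the reverse inclusion \(s(\bigvee A)\subseteq\bigcup_{a\in A}s(a)\) from interpolation, using that directed suprema in \(L\) are unions. The only difference is minor: you interpolate once at the greatest element \(c_F\) of \(F\). The paper instead interpolates separately for each \(b\in F\) and then uses directedness to find a single index above all the chosen ones, a step your version avoids.
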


\begin{proof}
First, \(s\) is monotone. Indeed, if \(x\le y\), then the auxiliary
property of the way-below relation gives
\(\waybelowset{x}\subseteq\waybelowset{y}\). Hence
\(s(x)\subseteq s(y)\).

Now let \((x_i)_{i\in I}\) be a directed family in \(X\), and put
\(x=\bigvee_{i\in I}x_i\). By monotonicity,
\[
\bigcup_{i\in I}s(x_i)\subseteq s(x).
\]

For the reverse inclusion, let \(F\in s(x)\). Then
\(F\subseteq B\cap\waybelowset{x}\). For each \(b\in F\), interpolation
yields an element \(c_b\in X\) such that
\(b\below c_b\below x\). Since \(x=\bigvee_{i\in I}x_i\), there exists
\(i_b\in I\) with \(c_b\le x_{i_b}\).

Because \(F\) is finite and \((x_i)_{i\in I}\) is directed, there is an
index \(i_0\in I\) such that \(x_{i_b}\le x_{i_0}\) for every \(b\in F\).
Thus \(b\below c_b\le x_{i_0}\), and hence
\(b\below x_{i_0}\) for every \(b\in F\). Therefore
\(F\subseteq B\cap\waybelowset{x_{i_0}}\), so
\(F\in s(x_{i_0})\).

Consequently,
\[
s(x)=\bigcup_{i\in I}s(x_i).
\]
By Proposition~\ref{prop:ideal-completion}, directed suprema in \(L\)
are given by unions. Hence
\[
s\left(\bigvee_{i\in I}x_i\right)
=
\bigvee_{i\in I}s(x_i),
\]
and therefore \(s\) is Scott-continuous.
\end{proof}


\begin{lemma}
\label{lemma:r-well-defined}
For every \(I\in L\), the set \(\bigcup I\) is directed in \(X\).
Consequently, the assignment in
Construction~\ref{construction:section-retraction} defines a map
\(r:L\to X\).
\end{lemma}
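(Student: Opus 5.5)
To show that \(\bigcup I\) is directed in \(X\), I will work directly from the definitions of \(P\) and of an ideal, and reduce everything to Lemma~\ref{lemma:finite-directed-greatest}.

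\emph{Nonemptiness.} Since \(I\) is an ideal of \(P\), it is nonempty. Pick some \(F\in I\). By the definition of \(P\), we have \(\bot\in F\), and therefore \(\bot\in\bigcup I\).

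\emph{Upper bounds.} Let \(b_1,b_2\in\bigcup I\), and choose \(F_1,F_2\in I\) with \(b_1\in F_1\) and \(b_2\in F_2\). Because \(I\) is directed, there is some \(H\in I\) with \(F_1\subseteq H\) and \(F_2\subseteq H\). This is where it matters that the order on \(P\) is inclusion: both \(b_1\) and \(b_2\) then lie in the single finite directed set \(H\). By Lemma~\ref{lemma:finite-directed-greatest}, \(H\) has a greatest element \(c_H\). Hence \(b_1,b_2\le c_H\), and \(c_H\in H\subseteq\bigcup I\). So \(\bigcup I\) is directed.

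\emph{Conclusion.} Since \(X\) is a dcpo (Lemma~\ref{lemma:compact-top-continuous}), the directed set \(\bigcup I\) has a supremum \(\bigvee_X\bigcup I\). Thus \(r(I)\) is a well-defined element of \(X\), and \(r\) is a map \(L\to X\).

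I expect no real obstacle. The only point needing care is that the witness for directedness in \(X\) is the top element of a member of \(I\), not an arbitrary upper bound in \(X\). This is exactly what makes the union of an ideal of finite directed sets directed.
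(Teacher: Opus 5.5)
Your proof is correct and follows the paper's argument essentially verbatim: nonemptiness comes from \(\bot\in F\), and for \(b_1,b_2\) an upper bound \(H\supseteq F_1\cup F_2\) in \(I\) supplies a common upper bound of \(b_1,b_2\) inside \(\bigcup I\). The only cosmetic difference is that you take the greatest element of \(H\) via Lemma~\ref{lemma:finite-directed-greatest}, whereas the paper applies directedness of \(H\) in \(X\) directly.
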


\begin{proof}
Since \(I\) is a nonempty ideal of \(P\), choose \(F\in I\). Every
member of \(P\) contains \(\bot\), so \(\bot\in F\subseteq\bigcup I\).
Thus \(\bigcup I\) is nonempty.

Now let \(b_1,b_2\in\bigcup I\). There exist \(F_1,F_2\in I\) such that
\(b_1\in F_1\) and \(b_2\in F_2\). Since \(I\) is directed in the
inclusion order, there exists \(F_3\in I\) with
\(F_1,F_2\subseteq F_3\). As \(F_3\in P\), it is directed in \(X\);
hence there exists \(b_3\in F_3\) such that
\(b_1,b_2\le b_3\). Since \(F_3\subseteq\bigcup I\), we have
\(b_3\in\bigcup I\).

Therefore \(\bigcup I\) is directed in \(X\). Since \(X\) is a dcpo,
the supremum \(\bigvee_X\bigcup I\) exists, and \(r\) is well defined.
\end{proof}


\begin{lemma}
\label{lemma:r-scott-continuous}
The map \(r:L\to X\) is Scott-continuous.
\end{lemma}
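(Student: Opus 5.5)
We show that \(r\) is monotone and then that it preserves directed suprema, in both cases reducing to the fact that directed suprema in \(L\) are unions (Proposition~\ref{prop:ideal-completion}).

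\emph{Monotonicity.} Let \(I\subseteq J\) in \(L\). Then \(\bigcup I\subseteq\bigcup J\). Both sets are directed subsets of \(X\) by Lemma~\ref{lemma:r-well-defined}, so
\[
\bigvee_X\bigcup I\le\bigvee_X\bigcup J,
\qquad\text{that is,}\qquad r(I)\le r(J).
\]

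\emph{Preservation of directed suprema.} Let \((I_k)_{k\in K}\) be a directed family in \(L\), and put \(I=\bigcup_{k\in K}I_k\). This union is the supremum of the family in \(L\). By monotonicity, \(\bigvee_k r(I_k)\le r(I)\), and the left-hand supremum exists because \(\{r(I_k)\}_{k\in K}\) is directed. For the reverse inequality, observe that
\[
\bigcup I=\bigcup_{k\in K}\bigcup I_k .
\]
Indeed, an element \(b\in\bigcup I\) lies in some \(F\in I\), hence \(F\in I_k\) for some \(k\), so \(b\in\bigcup I_k\); the other inclusion is immediate. Each \(b\in\bigcup I_k\) satisfies \(b\le r(I_k)\le\bigvee_k r(I_k)\). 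Hence \(\bigvee_k r(I_k)\) is an upper bound of \(\bigcup I\), which gives \(r(I)\le\bigvee_k r(I_k)\).

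The argument is a routine exchange of iterated suprema. The only point needing care is that every supremum involved exists in \(X\). For \(\bigcup I_k\) and \(\bigcup I\) this is Lemma~\ref{lemma:r-well-defined}, and the family \(\{r(I_k)\}_{k\in K}\) is directed by monotonicity. No continuity of \(X\) and no interpolation are required, so I do not expect any real obstacle.
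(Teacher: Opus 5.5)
Your proof is correct and follows essentially the same route as the paper: monotonicity from \(\bigcup I\subseteq\bigcup J\), directed suprema in \(L\) computed as unions, and the identity \(\bigcup(\bigcup_k I_k)=\bigcup_k\bigcup I_k\), which lets you compare \(r(\bigvee_k I_k)\) with \(\bigvee_k r(I_k)\). The only cosmetic difference is that you get one inequality from monotonicity, whereas the paper shows that both sides are the supremum of the same subset of \(X\).
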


\begin{proof}
First, \(r\) is monotone. Indeed, if \(I\subseteq J\), then
\(\bigcup I\subseteq\bigcup J\), and hence
\(r(I)\le r(J)\).

Now let \((I_j)_{j\in J}\) be a directed family in \(L\). By
Proposition~\ref{prop:ideal-completion}, its supremum is
\(\bigcup_{j\in J} I_j\). Therefore
\[
r\left(\bigvee_{j\in J} I_j\right)
=
\bigvee_X \bigcup\left(\bigcup_{j\in J} I_j\right)
=
\bigvee_X \bigcup_{j\in J}\bigcup I_j.
\]

Since \(r\) is monotone, the family \((r(I_j))_{j\in J}\) is directed.
Moreover, \(\bigvee_{j\in J}r(I_j)\) is the supremum in \(X\) of the
same set \(\bigcup_{j\in J}\bigcup I_j\). Indeed, it is an upper bound
of this set because each \(\bigcup I_j\) lies below \(r(I_j)\);
conversely, every upper bound of
\(\bigcup_{j\in J}\bigcup I_j\) is an upper bound of each
\(r(I_j)=\bigvee_X\bigcup I_j\). Hence
\[
r\left(\bigvee_{j\in J} I_j\right)
=
\bigvee_{j\in J} r(I_j).
\]
Thus \(r\) preserves directed suprema and is Scott-continuous.
\end{proof}


\begin{proposition}
\label{prop:section-retraction}
The Scott-continuous maps \(s:X\to L\) and \(r:L\to X\) satisfy
\(r\circ s=\id_X\). Moreover,
\[
s(\topel)=\topel_L.
\]
Thus \(r\) is a retraction with section \(s\).
\end{proposition}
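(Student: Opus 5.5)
We must prove two things: \(r(s(x))=x\) for every \(x\in X\), and \(s(\topel)=\topel_L\). Throughout we use that \(B\) is a basis of \(X\), so for each \(x\) the set \(B\cap\waybelowset{x}\) is directed with supremum \(x\).

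\textbf{Step 1: identify \(\bigcup s(x)\).} We claim
\[
\bigcup s(x)=B\cap\waybelowset{x}.
\]
The inclusion \(\subseteq\) is immediate from Construction~\ref{construction:section-retraction}, since every \(F\in s(x)\) satisfies \(F\subseteq B\cap\waybelowset{x}\).

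For \(\supseteq\), let \(b\in B\cap\waybelowset{x}\). We need a member of \(P\) that contains \(b\) and lies inside \(B\cap\waybelowset{x}\). The set \(\{\bot,b\}\) works:
\begin{itemize}
\item it is finite and contains \(\bot\);
\item it has greatest element \(b\), so it is directed by Lemma~\ref{lemma:finite-directed-greatest};
\item \(\bot\in B\) and \(\bot\below x\) (as already used in Lemma~\ref{lemma:sx-ideal}), so \(\{\bot,b\}\subseteq B\cap\waybelowset{x}\).
\end{itemize}
Hence \(\{\bot,b\}\in s(x)\), and so \(b\in\bigcup s(x)\).

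\textbf{Step 2: compute \(r(s(x))\).} By Step 1 and the basis property,
\[
r(s(x))=\bigvee_X\bigcup s(x)=\bigvee_X\bigl(B\cap\waybelowset{x}\bigr)=x.
\]
Thus \(r\circ s=\id_X\). Scott-continuity of \(s\) and \(r\) is already given by Lemmas~\ref{lemma:s-scott-continuous} and~\ref{lemma:r-scott-continuous}, so \(r\) is a retraction with section \(s\).

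\textbf{Step 3: show \(s(\topel)=\topel_L\).} By Corollary~\ref{cor:everything-below-top}, every element of \(X\) is way below \(\topel\), so \(\waybelowset{\topel}=X\). Therefore \(B\cap\waybelowset{\topel}=B\), and
\[
s(\topel)=\{F\in P:F\subseteq B\}=P.
\]
By Proposition~\ref{prop:algebraic-target}, \(P=\topel_L\).

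The only point needing care is the choice of witness in Step 1. A member of \(P\) must contain \(\bot\) and be directed, and \(\{\bot,b\}\) satisfies both conditions. Once that is checked, the rest is direct.
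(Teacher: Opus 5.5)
Your proof is correct and matches the paper's argument step for step: you identify $\bigcup s(x)$ with $B\cap\waybelowset{x}$ using the witness $\{\bot,b\}$, apply the basis property to get $r(s(x))=x$, and use Corollary~\ref{cor:everything-below-top} to conclude $s(\topel)=P=\topel_L$. The only difference is cosmetic: the paper handles $b=\bot$ as a separate case, while your witness $\{\bot,b\}$ simply becomes $\{\bot\}$ when $b=\bot$, which is equally valid.
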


\begin{proof}
We first show that
\[
\bigcup s(x)
=
B\cap\waybelowset{x}
\qquad(x\in X).
\]
The inclusion from left to right follows immediately from the definition
of \(s(x)\).

Conversely, let \(b\in B\cap\waybelowset{x}\). If \(b=\bot\), then
\(\{\bot\}\in s(x)\). If \(b\ne\bot\), then
\(\{\bot,b\}\) is a finite directed subset of \(B\), contains \(\bot\),
and is contained in \(B\cap\waybelowset{x}\). Hence
\(\{\bot,b\}\in s(x)\). In either case, \(b\in\bigcup s(x)\), proving
the reverse inclusion.

Since \(B\) is a basis of \(X\), it follows that
\[
r(s(x))
=
\bigvee_X\bigcup s(x)
=
\bigvee_X\bigl(B\cap\waybelowset{x}\bigr)
=
x.
\]
Therefore \(r\circ s=\id_X\).

Finally, Corollary~\ref{cor:everything-below-top} gives
\(b\below\topel\) for every \(b\in B\). Hence
\(B\cap\waybelowset{\topel}=B\). Since every \(F\in P\) is a subset of
\(B\), the definition of \(s\) yields
\[
s(\topel)
=
P
=
\topel_L.
\]
\end{proof}


The source and target domains are now linked by a Scott-continuous
section--retraction pair satisfying \(s(\topel)=\topel_L\). We next
analyse the elements \(u\below s\) in the function space \([X\to L]\).
The local finiteness of \(L\), together with evaluation at the greatest
element \(\topel\), will imply that every such approximant \(u\) has
finite image.

\subsection{Finite-Image Approximations of the Section}
\label{subsec:finite-image-approximations}

Having established a Scott-continuous section--retraction pair between
\(X\) and \(L\), we now investigate the approximants of the section map
\(s\) in the Scott function space \([X\to L]\). The key result of this
subsection is that every approximant of \(s\) has finite image. This
will later allow the retraction \(r\) to transform these approximants
into finite-image deflations on \(X\), leading to the desired
contradiction.


\begin{lemma}
\label{lemma:top-approximation}
If \(u\below s\) in \([X\to L]\), then
\(u(\topel)\below\topel_L\).
\end{lemma}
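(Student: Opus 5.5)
The plan is to transfer way-belowness from the function space to the value at \(\topel\) by a family of constant-on-top approximants of \(s\). For each compact element \(c=\downarrow_P F\) of \(L\) with \(F\in P\), define \(s_F:X\to L\) by
\[
s_F(x)=
\begin{cases}
s(x), & x\in D,\\
\downarrow_P F\vee s(\text{?}), & x=\topel,
\end{cases}
\]
but more simply, I will use
\[
s_F(x)=s(x)\ (x\in D),\qquad s_F(\topel)=\downarrow_P F\cup\bigcup_{y\in D}s(y),
\]
which is not obviously an ideal. So instead I choose the cleaner route: set \(s_F(x)=s(x)\cap\downarrow_P F'\)? That fails monotonicity-free issues too. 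The most robust choice is
\[
s_F(x)=
\begin{cases}
s(x), & x\in D,\\
s(x)\ \text{if } F\text{-independent}\ldots
\end{cases}
\]
Rather than guess, I commit to the following: for \(F\in P\) put \(s_F(x)=s(x)\) for \(x\in D\) and \(s_F(\topel)=J_F\), where \(J_F\) is the ideal generated by \(\bigcup_{x\in D}s(x)\cup\{F\}\). This is problematic because \(\bigcup_{x\in D}s(x)\) need not be directed. So the genuinely correct approach is to use maps that are constant on \(D\)-behaviour relative to \(\topel\), namely exploit that \(\topel\) is compact and isolated.

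Concretely: for \(F\in P\) define \(s_F:X\to L\) by \(s_F(x)=s(x)\cap \downarrow_P F\) is not an ideal in general, so I instead define
\[
s_F(x)=\begin{cases}s(x), & x\in D,\\ s(\topel)=\topel_L, & x=\topel\end{cases}
\]
— which is just \(s\). The right device is therefore a directed family \((g_F)_{F\in P}\) with \(g_F(x)=s(x)\) on \(D\) and \(g_F(\topel)=\downarrow_P F_\ast\), where \(F_\ast\) ranges over members of \(P\) large enough to dominate. Monotonicity requires \(s(x)\subseteq g_F(\topel)\) for all \(x\in D\), which fails since \(s(x)\) is infinite. Hence I abandon pointwise-on-\(D\) preservation and use the family \(h_F\) given by \(h_F(x)=\downarrow_P F\cap s(x)\)-generated principal ideal: define \(h_F(x)=\downarrow_P\bigl(\{\bot\}\cup (F\cap\waybelowset{x})\bigr)\) when \(F\cap \waybelowset x\) has a greatest element. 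To guarantee this, restrict to \(F\) with greatest element \(\topel\) and use \(F\cap\waybelowset{x}\) together with closure.

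\textbf{Simplest committed argument.} Since \(s(\topel)=\topel_L=\bigvee_{F\in P}\downarrow_PF\) and directed suprema in \([X\to L]\) are pointwise, it suffices to exhibit a directed family \((v_F)_{F\in P}\) in \([X\to L]\) with \(\bigvee_F v_F=s\) and \(v_F(\topel)\subseteq\downarrow_P F\) up to a cofinal reindexing. Then \(u\below s\) gives \(u\le v_F\) for some \(F\), so \(u(\topel)\subseteq\downarrow_P F\below\topel_L\), and \(u(\topel)\below\topel_L\) follows. I take \(v_F(x)=\{E\in s(x): E\subseteq F\}\), which is an ideal when \(F\) has the property that \(F\cap\waybelowset{x}\) is directed; I will enforce this by indexing over \(F\) closed under the relevant finite joins in \(B\), using that \(B\) is a basis so such \(F\) are cofinal. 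Scott continuity of \(v_F\) follows as in Lemma~\ref{lemma:s-scott-continuous} since \(F\) is finite, and \(\bigvee_F v_F=s\) holds because every \(E\in s(x)\) lies in some admissible \(F\). The main obstacle is precisely ensuring \(v_F(x)\) is directed for every \(x\); this is where I expect the real work, and I would first try choosing \(F\) to contain, for each subset of \(F\) with an upper bound way below some \(x\), a basis witness — a finite closure that may not terminate, in which case the fallback is to replace \(v_F(x)\) by the ideal generated inside \(\downarrow_P F\), which is finite hence has directed-closure issues resolved by taking the principal ideal of the largest member.
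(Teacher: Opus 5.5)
\textbf{There is a genuine gap: the family your argument depends on is never constructed, and the candidate you give is not well defined.}

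Your final reduction is sound. Suppose you had a directed family \((v_F)_{F\in P}\) in \([X\to L]\) with \(s\le\bigvee_F v_F\) and \(v_F(\topel)\subseteq\downarrow_P F\). Then \(u\below s\) would give \(u(\topel)\subseteq\downarrow_P F\). Since \(\downarrow_P F\) is compact and lies below \(\topel_L\), this yields \(u(\topel)\below\topel_L\).

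Your candidate is \(v_F(x)=\{E\in P:E\subseteq F\cap\waybelowset{x}\}\). This is an ideal only if \(F\cap\waybelowset{x}\) contains a common upper bound for each pair of its members, and that fails in general. Take \(F=\{\bottom,b_1,b_2,\topel\}\) with \(b_1,b_2\in B\) incomparable and both way below some \(x\in D\). Since \(\topel\below x\) forces \(x=\topel\), you get \(F\cap\waybelowset{x}=\{\bottom,b_1,b_2\}\). Then \(v_F(x)=\{\{\bottom\},\{\bottom,b_1\},\{\bottom,b_2\}\}\), which is not directed.

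Your proposed repairs do not close this. Choosing cofinally many \(F\) for which every \(F\cap\waybelowset{x}\) has a greatest element is a finite-closure condition you give no reason to expect. If it held, the maps \(x\mapsto\max(F\cap\waybelowset{x})\) would be a directed family of finite-image Scott-continuous maps below \(\id_X\) with supremum \(\id_X\). That is an RB-type structure on a domain chosen precisely to be non-RB. The fallback, taking ``the principal ideal of the largest member'', presupposes exactly the greatest element that is missing.

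The missing observation is that \(u\below s\) only needs a directed family whose supremum dominates \(s\), not one whose supremum equals \(s\). Because \(\topel_L\) is the top of \(L\), constant maps suffice. Given a directed \(A\subseteq L\) with \(\topel_L\le\bigvee A\), the constant maps \(\bar a\) (\(a\in A\)) form a directed family in \([X\to L]\). Its pointwise supremum is \(\overline{\topel_L}\ge s\). Hence \(u\le\bar a\) for some \(a\in A\), and so \(u(\topel)\le a\). This is the paper's two-line proof.

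In your own framework, simply replace \(v_F\) by the constant map with value \(\downarrow_P F\). This family is directed because \(P\) is, and its supremum is \(\overline{\topel_L}\ge s\). The directedness obstacle disappears entirely, and the argument shows that \(u(x)\below\topel_L\) for every \(x\), not only for \(x=\topel\).
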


\begin{proof}
Let \(A\subseteq L\) be directed and suppose that
\(\topel_L\le\bigvee A\). Since \(\topel_L\) is the greatest element of
\(L\), it follows that \(\bigvee A=\topel_L\).

For each \(a\in A\), let \(\bar a:X\to L\) denote the constant map with
value \(a\). The family \(\{\bar a:a\in A\}\) is directed in
\([X\to L]\), and its pointwise supremum is the constant map with value
\(\topel_L\). Since \(s\le\bar{\topel_L}\), we have
\[
s
\le
\bigvee_{a\in A}\bar a.
\]

As \(u\below s\), there exists \(a\in A\) such that \(u\le\bar a\).
Evaluating at \(\topel\) gives \(u(\topel)\le a\). This is precisely the
defining condition for \(u(\topel)\below\topel_L\).
\end{proof}


\begin{lemma}
\label{lemma:finite-image}
If \(u\below s\) in \([X\to L]\), then \(u[X]\) is finite.
\end{lemma}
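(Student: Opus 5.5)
The argument is short: combine Lemma~\ref{lemma:top-approximation} with Lemma~\ref{lemma:finite-principal-lowersets}, using monotonicity of \(u\) and the fact that \(\topel\) is the greatest element of \(X\).

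First, since \(u\below s\) in \([X\to L]\), Lemma~\ref{lemma:top-approximation} gives \(u(\topel)\below\topel_L\). Put \(a:=u(\topel)\). Lemma~\ref{lemma:finite-principal-lowersets} then tells us that the principal lower set \(\downarrow_L a\) is finite.

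Second, every element \(u\) of \([X\to L]\) is Scott-continuous and hence monotone. For every \(x\in X\) we have \(x\le\topel\), so \(u(x)\le u(\topel)=a\). Thus \(u[X]\subseteq\downarrow_L a\).

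Since \(u[X]\) is contained in the finite set \(\downarrow_L a\), it is finite. There is no real obstacle at this stage. The substantive work lies in the two cited lemmas: evaluation at the compact top transfers way-belowness from \(u\below s\) to \(u(\topel)\below\topel_L\), and the finite-subset structure of \(P\) makes lower sets of such elements finite. The only thing to check is that \(\topel\) really is the top of \(X\), which holds by Construction~\ref{construction:compact-top}.
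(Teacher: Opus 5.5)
Your proof is correct and is essentially the paper's argument: monotonicity of \(u\) and maximality of \(\topel\) give \(u[X]\subseteq\downarrow_L u(\topel)\), Lemma~\ref{lemma:top-approximation} gives \(u(\topel)\below\topel_L\), and Lemma~\ref{lemma:finite-principal-lowersets} makes this lower set finite. The only difference is that you invoke the two facts in the opposite order, which does not matter.
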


\begin{proof}
Since \(\topel\) is the greatest element of \(X\) and \(u\) is
monotone, \(u(x)\le u(\topel)\) for every \(x\in X\). Hence
\[
u[X]\subseteq\downarrow_Lu(\topel).
\]

By Lemma~\ref{lemma:top-approximation},
\(u(\topel)\below\topel_L\). Lemma~\ref{lemma:finite-principal-lowersets}
therefore implies that \(\downarrow_Lu(\topel)\) is finite. Consequently,
\(u[X]\) is finite.
\end{proof}


The preceding lemmas show that every approximant \(u\below s\) has
finite image. In the next subsection, we combine this fact with the
Scott-continuous retraction \(r:L\to X\). Assuming that the function
space \([X\to L]\) is continuous, the section \(s\) can be recovered as
the directed supremum of its approximants; composing these approximants
with \(r\) will then yield a directed family of finite-image deflations
whose supremum is \(\id_X\). This would make \(X\) an RB-domain,
contradicting its construction.

\subsection{The Counterexample}
\label{subsec:counterexample-proof}

Having shown that every approximant of the section map \(s\) has finite
image, we now derive the desired contradiction. If the Scott function
space \([X\to L]\) were continuous, then \(s\) would be the directed
supremum of the elements way below it. Composing these approximants with
the Scott-continuous retraction \(r:L\to X\) will produce a directed
family of finite-image deflations on \(X\) whose supremum is
\(\id_X\). This would force \(X\) to be an RB-domain, contrary to its
construction.

\begin{theorem}
\label{thm:continuity-implies-RB}
If the Scott function space \([X\to L]\) is continuous, then \(X\) is
an RB-domain.
\end{theorem}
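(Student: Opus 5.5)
Assume \([X\to L]\) is continuous. Then the set \(U:=\{u\in[X\to L]:u\below s\}\) is directed and \(s=\bigvee U\) pointwise. For each \(u\in U\) we form the Scott-continuous composite \(r\circ u:X\to X\). By Lemma~\ref{lemma:finite-image}, each \(u\) has finite image, so each \(r\circ u\) has finite image as well. Since \(U\) is directed and post-composition with \(r\) is monotone, the family \(\{r\circ u:u\in U\}\) is directed. Because \(r\) is Scott-continuous and suprema in \([X\to L]\) are pointwise, we get
\[
\bigvee_{u\in U}r\circ u=r\circ s=\id_X
\]
by Proposition~\ref{prop:section-retraction}. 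Moreover \(r\circ u\le r\circ s=\id_X\).

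The remaining issue, and the main obstacle, is that the maps \(r\circ u\) need not be idempotent. The definition of RB-domain in Subsection~\ref{subsec:rb} asks for finite-image deflations. I would not try to prove idempotence directly. Instead I would invoke the standard fact from \cite{AbramskyJung1994} that a continuous dcpo with a directed family of finite-image Scott-continuous maps below the identity, whose supremum is \(\id_X\), is an RB-domain.

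If I had to supply that step, I would use a maps-to-deflations argument. Let \(f\le\id_X\) be Scott-continuous with finite image. The iterates \(f^n\) form a descending chain in the finite set \(f[X]\) at each point. Since \(f[X]\) is finite, the restriction of \(f\) to it is a self-map of a finite poset lying below the identity. Hence \(f^{k}\) stabilises, with \(f^{k}=f^{k+1}\), for \(k=|f[X]|\). Then \(f^{k}\) is idempotent, Scott-continuous, has finite image, and satisfies \(f^k\le\id_X\); that is, it is a finite-image deflation.

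Directedness and the supremum condition then need care, because \(f^k\le f\). To handle this, replace the family by \(\{(r\circ u)^{k_u}\}\) and use directedness of \(U\) together with \(\bigvee_u (r\circ u)\circ(r\circ v)\ge\) compositions. Concretely, for \(u,v,w\in U\) one has \((r\circ w)^n\ge (r\circ u)^n\) whenever \(w\ge u\). Using the fact that the composite of maps converging to \(\id_X\) still converges to \(\id_X\), by joint Scott-continuity of composition, one gets \(\bigvee_u (r\circ u)^n=\id_X\) for each fixed \(n\). A diagonal argument over \(n\) then yields a directed family of deflations with supremum \(\id_X\).

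If this bookkeeping becomes delicate, I would simply cite \cite[Theorem~4.2.6-type characterisation]{AbramskyJung1994} of RB-domains via finite-image approximants of the identity. With that, \(X\) is an RB-domain, as the statement of Theorem~\ref{thm:continuity-implies-RB} claims.
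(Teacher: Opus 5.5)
Your first paragraph is exactly the paper's proof. It uses the family $\{r\circ u: u\below s\}$, finite image via Lemma~\ref{lemma:finite-image}, directedness from monotonicity of post-composition, $r\circ u\le r\circ s=\id_X$, and $\bigvee_u r\circ u=r\circ s=\id_X$ by Scott-continuity of $r$. You are right that $r\circ u$ need not be idempotent, although Subsection~\ref{subsec:bifinite} defines deflations as idempotent. The paper does not address this: it declares $f_u$ ``a deflation'' as soon as $f_u\le\id_X$. It thereby tacitly uses the Abramsky--Jung convention, in which a deflation need only be Scott-continuous, below the identity and of finite image; this is the reading under which RB-domains are the retracts of bifinite domains. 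With that reading your argument is complete after the first paragraph, and your fallback citation is the right way to close it.

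Your ``maps-to-deflations'' repair, however, is wrong and should be dropped. The stabilised iterate $f^{k}$ is indeed an idempotent finite-image deflation, and for each \emph{fixed} $n$ one does get $\bigvee_u (r\circ u)^n=\id_X$. But the exponent $k_u=|(r\circ u)[X]|$ grows with $u$, and nothing controls either $\bigvee_u (r\circ u)^{k_u}$ or the directedness of these maps. No diagonal argument can fix this. If $e$ is an idempotent finite-image deflation and $e(x)\le\bigvee A$ with $A$ directed, then $e(x)=e(e(x))\le\bigvee e[A]$. The finite directed set $e[A]$ has a largest element $e(a)\le a$, so every element of $e[X]$ is compact. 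A directed family of such maps with supremum $\id_X$ would therefore force $X$ to be algebraic. Concretely, on $[0,1]$ take $f_m(x)=\max\bigl(\{j2^{-m}:j2^{-m}<x\}\cup\{0\}\bigr)$. These maps form an increasing chain of finite-image Scott-continuous maps below the identity with supremum the identity, yet every stabilised iterate $f_m^{2^m}$ is the constant map $0$. For the same reason, the Abramsky--Jung characterisation does not produce idempotents either. It closes the proof only because it works with the non-idempotent notion of deflation; under the literally idempotent definition, every RB-domain would be algebraic, which is not the intended notion.
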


\begin{proof}
Assume that the Scott function space \([X\to L]\) is continuous. Since
\(s\in[X\to L]\), we have
\[
s
=
\bigvee
\{\,u\in[X\to L]:u\below s\,\},
\]
where the supremum is directed.

For each \(u\below s\), define
\[
f_u
:=
r\circ u
:
X\to X.
\]
Since both \(r\) and \(u\) are Scott-continuous,
\(f_u\) is Scott-continuous. Moreover, composition preserves the pointwise
order, so the family
\(\{f_u:u\below s\}\)
is directed.

For every \(u\below s\), we have \(u\le s\). Since \(r\) is monotone
and \(r\circ s=\id_X\), it follows that
\[
f_u
=
r\circ u
\le
r\circ s
=
\id_X.
\]
Thus each \(f_u\) is a deflation. Moreover, Lemma~\ref{lemma:finite-image}
shows that \(u[X]\) is finite, and hence
\[
f_u[X]
=
r[u[X]]
\]
is finite.

It remains to compute the supremum of this directed family. For every
\(x\in X\), Scott continuity of \(r\) gives
\[
\begin{aligned}
\left(\bigvee_{u\below s}f_u\right)(x)
&=
\bigvee_{u\below s}r(u(x)) \\
&=
r\left(\bigvee_{u\below s}u(x)\right) \\
&=
r(s(x)) \\
&=
x.
\end{aligned}
\]
Therefore
\[
\bigvee_{u\below s}f_u
=
\id_X.
\]

We have thus obtained a directed family of Scott-continuous
finite-image deflations on \(X\) whose supremum is \(\id_X\).
Consequently, \(X\) is an RB-domain.
\end{proof}

\begin{theorem}[Main Counterexample]
\label{thm:main-counterexample}
There exist an FS-domain \(X\) and an algebraic domain \(L\) such that
the Scott function space \([X\to L]\) is not continuous.
\end{theorem}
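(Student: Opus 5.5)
The plan is to instantiate the general machinery of this section with the planar closed-disk domain and then argue by contradiction using Theorem~\ref{thm:continuity-implies-RB}. Take \(D=\Disk\). By Proposition~\ref{prop:closed-disk-properties}, \(D\) is a continuous FS-domain that is not an RB-domain, and it is pointed, with least element the whole plane \(\mathbb{R}^2\). So \(D\) meets the standing assumption at the start of Section~\ref{subsec:compact-top}. Form \(X=D^{\top}\) as in Construction~\ref{construction:compact-top}. Lemma~\ref{lemma:compact-top-continuous} gives that \(X\) is a continuous dcpo, Lemma~\ref{lemma:compact-top-fs} gives that \(X\) is an FS-domain, and Lemma~\ref{lemma:compact-top-nonrb} gives that \(X\) is not an RB-domain.

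Next, fix a basis \(B\) of \(X\) containing \(\bot\) and \(\topel\). For instance, take \(B=X\) itself, or, to get an \(\omega\)-algebraic target, take the disks with rational centres and radii together with \(\mathbb{R}^2\) and \(\topel\). One should check that this countable set is a basis, which is standard for \(\Disk\). Let \(P=\FinDir_{\bot}(B)\) and \(L=\Idl(P)\) as in Construction~\ref{construction:finite-directed-poset}. By Proposition~\ref{prop:algebraic-target}, \(L\) is a pointed algebraic domain. This proposition is stated for an arbitrary basis containing \(\bot\) and \(\topel\), so no further verification is needed here.

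Now suppose, for contradiction, that \([X\to L]\) is continuous. The section and retraction of Proposition~\ref{prop:section-retraction} are available, with \(s(\topel)=\topel_L\). Lemma~\ref{lemma:finite-image} then shows that every approximant of \(s\) has finite image. Theorem~\ref{thm:continuity-implies-RB} therefore yields that \(X\) is an RB-domain, which contradicts Lemma~\ref{lemma:compact-top-nonrb}. Hence \([X\to L]\) is not continuous, and the pair \(X\), \(L\) witnesses the statement.

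Since all the substantive work has already been done, I expect no real obstacle. The one point needing care is that every hypothesis used earlier in the section is satisfied by \(D=\Disk\), namely pointedness, continuity, the FS property and the failure of RB. The last of these is the deep input, and it is imported from Chen, Kou and Lyu via Proposition~\ref{prop:closed-disk-properties}(iii). I would also remark that, as a consequence, Proposition~4.2.10 of Abramsky and Jung fails, since \(L\) is pointed and continuous.
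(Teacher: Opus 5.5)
Your proposal is correct and follows essentially the same route as the paper: form \(X=D^{\top}\), build \(L=\Idl(\FinDir_{\bot}(B))\), and obtain a contradiction by combining Theorem~\ref{thm:continuity-implies-RB} with Lemma~\ref{lemma:compact-top-nonrb}. The only difference is that you explicitly take \(D=\Disk\) via Proposition~\ref{prop:closed-disk-properties} to satisfy the section's standing hypothesis, whereas the paper's proof leaves that choice implicit.
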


\begin{proof}
By Lemma~\ref{lemma:compact-top-fs}, the domain \(X=D^{\top}\) is an
FS-domain. Since the original domain \(D\) is not an RB-domain,
Lemma~\ref{lemma:compact-top-nonrb} shows that \(X\) is not an
RB-domain. By Proposition~\ref{prop:algebraic-target}, the domain
\(L\) constructed in Section~\ref{subsec:algebraic-target} is
algebraic.

Suppose, for contradiction, that the Scott function space
\([X\to L]\) were continuous. Then
Theorem~\ref{thm:continuity-implies-RB} would imply that \(X\) is an
RB-domain, contradicting the preceding paragraph. Therefore,
\([X\to L]\) is not continuous.
\end{proof}

\begin{corollary}
\label{cor:AJ-counterexample}
Proposition~4.2.10 of
\cite{AbramskyJung1994}
is false.
\end{corollary}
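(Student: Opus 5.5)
The plan is to specialise the general construction of Section~\ref{sec:counterexample} to a concrete domain, and then compare its conclusion with the hypotheses of Proposition~4.2.10 of \cite{AbramskyJung1994}. That proposition asserts that \([D\to E]\) is continuous whenever \(D\) is an FS-domain and \(E\) is pointed and continuous. It therefore suffices to exhibit one FS-domain \(X\) and one pointed continuous dcpo \(L\) for which \([X\to L]\) fails to be continuous.

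First we fix the input. We take \(D=\Disk\), which is pointed, since \(\mathbb{R}^2\) is its least element. By Proposition~\ref{prop:closed-disk-properties} it is continuous, it is an FS-domain, and it is not an RB-domain. This is exactly the standing assumption of Section~\ref{subsec:compact-top}. We then form \(X=\Disk^{\top}\). By Lemma~\ref{lemma:compact-top-continuous} it is a continuous dcpo, and its least element is still \(\mathbb{R}^2\), so \(X\) is pointed. Lemma~\ref{lemma:compact-top-fs} shows that \(X\) is an FS-domain, and Lemma~\ref{lemma:compact-top-nonrb} shows that it is not an RB-domain. Next we choose a basis \(B\) of \(X\) containing \(\bot\) and \(\topel\); the basis \(X\) itself will do. From it we build \(L=\Idl(\FinDir_\bot(B))\). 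By Proposition~\ref{prop:algebraic-target}, \(L\) is a pointed algebraic domain, and in particular it is pointed and continuous.

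The key step is to check that the hypotheses of Proposition~4.2.10 hold for the pair \((X,L)\): \(X\) is an FS-domain, and \(L\) is pointed and continuous, since every algebraic dcpo is continuous. Proposition~4.2.10 would then assert that \([X\to L]\) is continuous. Theorem~\ref{thm:continuity-implies-RB} would in turn make \(X\) an RB-domain, contradicting Lemma~\ref{lemma:compact-top-nonrb}. Equivalently, we can invoke Theorem~\ref{thm:main-counterexample} directly, since its witnesses are precisely this pair.

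I expect no substantive obstacle, because the work has already been done in the preceding results. The only points needing care are bookkeeping. One is confirming that \(X\) inherits pointedness from \(\Disk\). The other is checking that the notion of ``FS-domain'' used in \cite{AbramskyJung1994} agrees with the one in Section~\ref{subsec:fs}, which it does by definition, so that the counterexample genuinely falls within the scope of the proposition.
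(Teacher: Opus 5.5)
Your proposal is correct and follows the paper's own argument. Both check that the pair \((X,L)\) underlying Theorem~\ref{thm:main-counterexample} satisfies the hypotheses of Proposition~4.2.10, and then derive a contradiction through Theorem~\ref{thm:continuity-implies-RB} and Lemma~\ref{lemma:compact-top-nonrb}. Your version is slightly more careful in two places: you instantiate \(D=\Disk\), so the standing hypothesis of Section~\ref{sec:counterexample} is actually witnessed, and you take pointedness of \(L\) from Proposition~\ref{prop:algebraic-target} rather than from the paper's overly broad remark that every algebraic domain is pointed.
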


\begin{proof}
Proposition~4.2.10 asserts that the Scott function space
\([D\to E]\) is continuous whenever \(D\) is an FS-domain and \(E\) is
pointed and continuous. Since every algebraic domain is pointed and
continuous, Theorem~\ref{thm:main-counterexample} provides an
FS-domain \(X\) and an algebraic domain \(L\) such that
\([X\to L]\) is not continuous. Hence Proposition~4.2.10 is false.
\end{proof}

\section{FS Sources and Plotkin's Tie as Target}
\label{sec:tie-target}

The counterexample constructed in the previous section shows that
continuity of Scott function spaces cannot be guaranteed under
one-sided FS assumptions in general. It is therefore natural to ask
whether the failure arises from the particular choice of target domain,
or whether it is an unavoidable phenomenon. In this section, we show
that the negative result is far from universal. When the target is
Plotkin's tie, every Scott function space \([D\to T]\) is continuous
whenever \(D\) is an FS-domain, despite the fact that Plotkin's tie is
itself neither an FS-domain nor an RB-domain.

The proof reveals a remarkable local approximation property of
Plotkin's tie. Although \(T\) is not globally approximated by finite
separated deflations, every compact approximation to a Scott-continuous
map into \(T\) is confined to a suitable finite truncation of the tie.
This localization allows the continuity argument to be carried out
within finite layers, whose directed union then recovers the original
function.

Our proof proceeds in five steps. We first introduce the finite-layer
truncations of Plotkin's tie. We then establish a local truncation
lemma showing that every compact approximation lands inside one of
these finite layers. Next, we prove that these approximants form a
directed family, allowing each Scott-continuous map to be recovered as
their supremum. This yields the continuity of the Scott function space
\([D\to T]\). Finally, we show that, despite this positive result,
Plotkin's tie remains neither an FS-domain nor an RB-domain.

\subsection{Finite-layer truncation maps}
\label{subsec:finite-layer-truncation-maps}

The key idea underlying the positive result of this section is that,
although Plotkin's tie is infinite, every compact approximation to a
Scott-continuous map into \(T\) interacts with only finitely many of its
levels. Rather than working directly with the entire domain, we
therefore introduce a family of finite-layer truncation maps that
preserve the initial levels of the tie while collapsing all higher
levels to a single value. These truncations retain enough of the
approximation structure to capture every compact approximation, yet have
finite image.

The finite-layer truncation maps will play the same role in this
section as the finite-image approximants of the section map did in the
counterexample of Section~\ref{sec:counterexample}. They provide the
finite approximations from which the continuity of the Scott function
space will ultimately be recovered.

We begin by defining the truncation maps.
For each integer \(n\ge0\), let \(T_n\) denote the finite subposet of
Plotkin's tie consisting of all elements lying in the first \(n\)
branching levels. Thus
\[
T_0\subseteq T_1\subseteq T_2\subseteq\cdots\subseteq T,
\]
and
\[
T=\bigcup_{n=0}^{\infty}T_n.
\]

Each \(T_n\) is finite and therefore forms a finite algebraic dcpo with
the induced order.

For every \(n\ge0\), we define the corresponding finite-layer
truncation map
\[
\tau_n:T\to T,
\]
which fixes every element of \(T_n\) and collapses every element lying
strictly above the \(n\)-th layer to the unique maximal element of
\(T_n\) lying below it.

\begin{definition}
\label{def:finite-layer-truncation}
For each \(N\in\mathbb{N}\), define the finite-layer truncation map
\(\tau_N:T\to T\) by
\[
\tau_N(\bottom_T)=\bottom_T,
\]
\[
\tau_N(a_n)=a_n
\quad\text{and}\quad
\tau_N(b_n)=b_n
\qquad (n\le N),
\]
and
\[
\tau_N(a_n)=\tau_N(b_n)=a_{N+1}
\qquad (n>N),
\]
together with
\[
\tau_N(\topel_T)=a_{N+1}.
\]
Thus \(\tau_N\) fixes the first \(N+1\) levels of \(T\) and collapses
all higher levels, as well as the greatest element, to \(a_{N+1}\).
\end{definition}

\begin{lemma}
\label{lemma:truncation-continuous}
For every \(N\in\mathbb{N}\), the map \(\tau_N\) is
Scott-continuous, preserves the least element, and has finite image.
\end{lemma}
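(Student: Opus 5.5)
The plan is to verify the three assertions directly from Definition~\ref{def:plotkin-tie} and Definition~\ref{def:finite-layer-truncation}, using one structural fact about directed subsets of \(T\). Preservation of the least element holds by definition, since \(\tau_N(\bottom_T)=\bottom_T\). For finite image, the image of \(\tau_N\) is contained in
\[
\{\bottom_T\}\cup\{a_n,b_n:n\le N\}\cup\{a_{N+1}\},
\]
which has \(2N+2\) elements.

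\textbf{Monotonicity.} I will check monotonicity by cases on the levels of \(x\le y\). Call the elements \(a_n,b_n\) with \(n\le N\), together with \(\bottom_T\), \emph{low}, and all remaining elements \emph{high}; the high elements are the \(a_n,b_n\) with \(n>N\) and \(\topel_T\).
\begin{itemize}
\item If \(x,y\) are both low, then \(\tau_N\) acts as the identity on both, so \(\tau_N(x)\le\tau_N(y)\).
\item If \(x,y\) are both high, then \(\tau_N(x)=\tau_N(y)=a_{N+1}\).
\item If \(x\) is low and \(y\) is high, then \(\tau_N(x)=x\) is \(\bottom_T\) or has level \(n\le N<N+1\), so \(x\le a_{N+1}=\tau_N(y)\) by the defining order of \(T\).
\item The remaining case, \(x\) high and \(y\) low, cannot occur with \(x\le y\): a high element lies at level above \(N\) or is \(\topel_T\), and is therefore not below a low element.
\end{itemize}

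\textbf{Structure of directed subsets.} For Scott continuity, the key step is to show that every directed \(A\subseteq T\) either has a greatest element or satisfies \(\sup A=\topel_T\) and contains elements of arbitrarily high level. Suppose \(A\) has no maximum; in particular \(\topel_T\notin A\). If the levels of elements of \(A\) were bounded by some \(k\), then \(A\) would lie in the finite set \(\{\bottom_T\}\cup\{a_n,b_n:n\le k\}\). A finite directed set has a greatest element by Lemma~\ref{lemma:finite-directed-greatest}, which is a contradiction. So the levels are unbounded. The only upper bound of elements of unbounded level is \(\topel_T\), hence \(\sup A=\topel_T\).

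\textbf{Scott continuity.} If \(A\) has a maximum \(m\), then \(\sup A=m\), and by monotonicity \(\tau_N(m)\) is the maximum of \(\tau_N[A]\), so \(\tau_N(\sup A)=\sup\tau_N[A]\). Otherwise \(\sup A=\topel_T\) and \(A\) contains some element of level \(>N\). That element is sent to \(a_{N+1}\), while every value of \(\tau_N\) lies below \(a_{N+1}\) (low values by the monotonicity computation above, high values by equality). Hence
\[
\sup\tau_N[A]=a_{N+1}=\tau_N(\topel_T)=\tau_N(\sup A).
\]

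The only step requiring real care is the classification of directed subsets. It rests on each level being finite, so that a directed set confined to finitely many levels is finite and has a greatest element. Everything else is routine case checking.
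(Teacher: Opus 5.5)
Your proposal is correct and follows essentially the same route as the paper: the same low/high case check for monotonicity, the same bound on the image, and a two-case treatment of directed suprema in which the only nontrivial case has \(a_{N+1}\) as the greatest element of \(\tau_N[A]\). The paper splits on whether \(\bigvee A=\topel_T\) and cites compactness of every element other than \(\topel_T\), whereas you split on whether \(A\) has a maximum and derive the dichotomy from the finiteness of each level via Lemma~\ref{lemma:finite-directed-greatest}; this also proves the claim the paper leaves unjustified, namely that a directed set with supremum \(\topel_T\) either contains \(\topel_T\) or has elements of unbounded level.
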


\begin{proof}
By definition, \(\tau_N(\bottom_T)=\bottom_T\). We first show that
\(\tau_N\) is monotone. The only case requiring attention is when
\(s\le t\), where \(s\) lies at a level at most \(N\) and \(t\) lies
above level \(N\). In this case,
\[
\tau_N(s)=s\le a_{N+1}=\tau_N(t).
\]
All other cases follow immediately from the definition.

Let \(A\subseteq T\) be directed. Suppose first that
\(\bigvee A\ne\topel_T\). Since every element of \(T\) other than
\(\topel_T\) is compact, there exists \(a\in A\) such that
\(a=\bigvee A\). By monotonicity,
\[
\tau_N\left(\bigvee A\right)
=
\tau_N(a)
=
\bigvee\tau_N[A].
\]

Suppose instead that \(\bigvee A=\topel_T\). Then either
\(\topel_T\in A\), or the levels of the elements of \(A\) are
unbounded. In either case, \(a_{N+1}\in\tau_N[A]\). Moreover,
\(a_{N+1}\) is the greatest element of \(\tau_N[A]\), and hence
\[
\tau_N\left(\bigvee A\right)
=
a_{N+1}
=
\bigvee\tau_N[A].
\]
Therefore \(\tau_N\) preserves directed suprema and is
Scott-continuous.

Finally,
\[
\tau_N[T]
\subseteq
\{\bottom_T\}
\cup
\{a_n,b_n:0\le n\le N\}
\cup
\{a_{N+1}\},
\]
so \(\tau_N[T]\) is finite.
\end{proof}

\begin{remark}
\label{remark:truncation-not-deflation}
The truncation map \(\tau_N\) is not a deflation, since
\[
\tau_N(b_{N+1})
=
a_{N+1}
\not\le
b_{N+1}.
\]
Thus finite image alone does not imply that \(\tau_N\) is way below the
identity. In the argument below, the required way-below relation will
instead arise from finite separation in the source FS-domain.
\end{remark}

The finite-layer truncation maps provide finite-image approximations to
Plotkin's tie, but they are not themselves deflations. Their role is
instead to localize the approximation problem: after composing with a
suitable finite approximation arising from the FS structure of the
source, a map into \(T\) can be forced to remain within one finite
layer. We next make this localization precise in the local truncation
lemma.

\subsection{The local truncation lemma}
\label{subsec:local-truncation}

The finite-layer truncation maps introduced in the previous subsection
cannot approximate the identity on \(T\) uniformly, since no fixed
finite layer contains all of Plotkin's tie. The crucial observation,
however, is that such a global approximation is unnecessary. For the
purpose of proving continuity of the Scott function space, it suffices
to approximate each compact map individually.

The key idea is that every compact approximation to a
Scott-continuous map into \(T\) has image contained in some finite
layer of the tie. Although the required layer depends on the
particular approximation, it is always finite. This locality property
allows the infinite approximation structure of \(T\) to be reduced,
one compact approximation at a time, to the finite-layer truncation
maps constructed above.

The following lemma makes this principle precise.

\begin{lemma}[Local truncation]
\label{lemma:local-truncation}
Let \(D\) be an FS-domain, and let
\((f_i,M_i)_{i\in I}\) be a finitely separating approximate identity
on \(D\). Let \(h:D\to T\) be Scott-continuous. Fix \(i\in I\), and
choose \(N\in\mathbb{N}\) such that
\[
m\in M_i
\quad\text{and}\quad
h(m)\ne\topel_T
\quad\Longrightarrow\quad
\rank(h(m))\le N.
\]
Define
\[
p_{i,N}
:=
\tau_N\circ h\circ f_i.
\]
Then \(p_{i,N}\below h\) in \([D\to T]\). In particular,
\(p_{i,N}\le h\), and \(p_{i,N}\) has finite image.
\end{lemma}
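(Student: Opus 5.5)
The plan is to verify the three claims in order: continuity and finite image, then \(p_{i,N}\le h\), then \(p_{i,N}\below h\). The last step uses the standard finite-separation argument, but with \(\tau_N\) absorbing the one non-compact element \(\topel_T\).

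First, \(p_{i,N}=\tau_N\circ h\circ f_i\) is Scott-continuous as a composite of Scott-continuous maps. Lemma~\ref{lemma:truncation-continuous} gives this for \(\tau_N\) and also shows that the image of \(p_{i,N}\) lies in the finite set \(\tau_N[T]\). Note also that \(\tau_N[T]\) does not contain \(\topel_T\), so every value of \(p_{i,N}\) is a compact element of \(T\).

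For \(p_{i,N}\le h\), fix \(x\in D\) and choose \(m\in M_i\) with \(f_i(x)\le m\le x\). Monotonicity gives \(h(f_i(x))\le h(m)\le h(x)\). I will split into two cases.
\begin{itemize}
\item If \(h(m)\ne\topel_T\), then \(\rank(h(m))\le N\) by the choice of \(N\). Hence \(h(f_i(x))\le h(m)\) also has rank at most \(N\), and \(\tau_N\) fixes it. This gives \(p_{i,N}(x)=h(f_i(x))\le h(x)\).
\item If \(h(m)=\topel_T\), then \(h(x)=\topel_T\), and the inequality is trivial.
\end{itemize}

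For \(p_{i,N}\below h\), let \(G\subseteq[D\to T]\) be directed with \(h\le\bigvee G\), the supremum being pointwise. The key observation is that for each \(m\in M_i\) the element \(c_m:=\tau_N(h(m))\) is compact in \(T\) and satisfies \(c_m\le h(m)\).
\begin{itemize}
\item If \(h(m)\ne\topel_T\), then \(c_m=h(m)\), by the rank bound.
\item If \(h(m)=\topel_T\), then \(c_m=a_{N+1}\), which is compact and lies below \(\topel_T\).
\end{itemize}
In either case \(c_m\le h(m)\le\bigvee_{g\in G}g(m)\), so compactness of \(c_m\) yields some \(g_m\in G\) with \(c_m\le g_m(m)\). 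Since \(M_i\) is finite and \(G\) is directed, there is a \(g\in G\) above all the \(g_m\).

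It remains to check \(p_{i,N}\le g\). Given \(x\), choose \(m\in M_i\) with \(f_i(x)\le m\le x\). Monotonicity of \(\tau_N\circ h\) then gives
\[
p_{i,N}(x)\le\tau_N(h(m))=c_m\le g_m(m)\le g(m)\le g(x).
\]
Hence \(p_{i,N}\below h\).

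The only delicate point I anticipate is the case \(h(m)=\topel_T\). There \(h(m)\) itself is not compact, so a naive use of \(h\circ f_i\) fails. The truncation replaces the value by the compact element \(a_{N+1}\), and the monotonicity of \(\tau_N\) from Lemma~\ref{lemma:truncation-continuous} is exactly what is needed in the final chain of inequalities.
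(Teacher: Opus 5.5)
Your proposal is correct and follows the paper's proof essentially step for step: the same case split on whether \(h(m)=\topel_T\) gives \(p_{i,N}\le h\), and the way-below part uses the same finite-separation argument. There, \(\tau_N(h(m))\) is a compact element below \(h(m)\) for each \(m\in M_i\), a single \(g\) is obtained by directedness, and the chain \(p_{i,N}(x)\le\tau_N(h(m))\le g(m)\le g(x)\) concludes; the only cosmetic difference is that you invoke compactness of \(\tau_N(h(m))\) where the paper writes \(\tau_N(h(m))\below h(m)\), which amounts to the same thing.
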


\begin{proof}
The map \(p_{i,N}\) is Scott-continuous and has finite image by
Lemma~\ref{lemma:truncation-continuous}.

We first show that \(p_{i,N}\le h\). Let \(x\in D\). Since \(M_i\)
finitely separates \(f_i\) from \(\id_D\), there exists \(m\in M_i\)
such that
\[
f_i(x)\le m\le x.
\]
Suppose first that \(h(m)\ne\topel_T\). By the choice of \(N\), the
element \(h(m)\) lies at a level at most \(N\). Since
\(h(f_i(x))\le h(m)\), the element \(h(f_i(x))\) is also fixed by
\(\tau_N\). Hence
\[
p_{i,N}(x)
=
h(f_i(x))
\le
h(x).
\]
If \(h(m)=\topel_T\), then \(m\le x\) gives
\(\topel_T=h(m)\le h(x)\), and therefore \(h(x)=\topel_T\). Thus again
\(p_{i,N}(x)\le h(x)\).

It remains to prove that \(p_{i,N}\below h\). Let
\((g_\lambda)_{\lambda\in\Lambda}\) be directed in \([D\to T]\), and
suppose that
\[
h
\le
\bigvee_{\lambda\in\Lambda}g_\lambda.
\]
For every \(m\in M_i\), we have
\[
\tau_N(h(m))\below h(m).
\]
Indeed, if \(h(m)\ne\topel_T\), then the choice of \(N\) gives
\(\tau_N(h(m))=h(m)\), and this element is compact. If
\(h(m)=\topel_T\), then
\(\tau_N(h(m))=a_{N+1}\below\topel_T\).

Consequently, for each \(m\in M_i\), there exists
\(\lambda_m\in\Lambda\) such that
\[
\tau_N(h(m))
\le
g_{\lambda_m}(m).
\]
Since \(M_i\) is finite and the family \((g_\lambda)\) is directed,
there exists a single \(\lambda\in\Lambda\) such that
\[
\tau_N(h(m))
\le
g_\lambda(m)
\qquad
(m\in M_i).
\]

Now let \(x\in D\), and choose \(m\in M_i\) with
\(f_i(x)\le m\le x\). Then
\[
\begin{aligned}
p_{i,N}(x)
&=
\tau_N(h(f_i(x)))\\
&\le
\tau_N(h(m))\\
&\le
g_\lambda(m)\\
&\le
g_\lambda(x).
\end{aligned}
\]
Thus \(p_{i,N}\le g_\lambda\), proving that
\(p_{i,N}\below h\).
\end{proof}

The local truncation lemma provides a systematic supply of finite-image
maps way below a given \(h\in[D\to T]\). The truncation level may depend
on both the chosen source approximation \(f_i\) and the finitely many
values of \(h\) on its separating set \(M_i\), but no global bound is
required.

To prove continuity, however, we must understand not only these specially
constructed approximants, but the entire set of maps way below \(h\).
We next show that every such map is bounded by a constant map at some
finite level of the tie. This boundedness will allow any two way-below
approximants to be dominated by a common local truncation.


\subsection{Directedness of the way-below approximants}
\label{subsec:directedness-approximants}

The local truncation lemma constructs finite-image approximants below
\(h\), but continuity requires the full set
\(\waybelowset h\) to be directed. The first step is to show that every
map way below \(h\) is uniformly bounded at some finite level of
Plotkin's tie.

\begin{lemma}
\label{lemma:waybelow-finite-level}
Let \(h\in[D\to T]\). If \(u\below h\), then there exists
\(r\in\mathbb{N}\) such that
\[
u\le \bar a_r.
\]
Consequently, \(u[D]\) is finite and does not contain \(\topel_T\).
\end{lemma}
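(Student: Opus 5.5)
The plan is to test \(u\below h\) against the directed family obtained by truncating the constant top map. For each \(r\in\mathbb{N}\), let \(\bar a_r:D\to T\) denote the constant map with value \(a_r\). Constant maps are Scott-continuous, and the sequence \((\bar a_r)_{r\ge1}\) is increasing, hence directed in \([D\to T]\), because \(a_r\le a_{r+1}\) in \(T\).

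Next we compute its supremum. Suprema in \([D\to T]\) are pointwise. In \(T\), the chain \(a_1\le a_2\le\cdots\) has no upper bound other than \(\topel_T\), since no element at a finite level lies above all \(a_r\). Hence \(\bigvee_r a_r=\topel_T\), and so
\[
\bigvee_{r\ge1}\bar a_r=\bar{\topel_T}.
\]
Because \(h\le\bar{\topel_T}\), the way-below relation \(u\below h\) supplies some \(r\) with \(u\le\bar a_r\). This is the first claim.

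For the consequence, the inequality \(u\le\bar a_r\) gives \(u(x)\le a_r\) for every \(x\in D\). Thus
\[
u[D]\subseteq\downarrow_T a_r=\{\bottom_T\}\cup\{a_n,b_n:n<r\}\cup\{a_r\},
\]
which is a finite set and does not contain \(\topel_T\), since \(\topel_T\not\le a_r\).

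The only point that needs checking is that the \(a_r\) have no upper bound in \(T\) other than \(\topel_T\). This follows directly from Definition~\ref{def:plotkin-tie}: an element \(u_m\) at level \(m\) satisfies \(a_r\le u_m\) only when \(r<m\) or \(u_m=a_r\), so it fails to bound \(a_{m+1}\). I therefore expect no real obstacle; the argument is a direct adaptation of the proof of Lemma~\ref{lemma:top-approximation}, with \(h\le\bar{\topel_T}\) playing the role of \(s\le\bar{\topel_L}\).
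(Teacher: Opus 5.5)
Your proposal is correct and follows the paper's proof essentially step for step: test $u\below h$ against the increasing chain of constant maps $\bar a_r$, whose supremum is $\bar{\topel_T}\ge h$, and then read off $u[D]\subseteq\downarrow_T a_r$. Your explicit checks that $\topel_T$ is the only upper bound of the $a_r$, and your description of $\downarrow_T a_r$, supply details that the paper leaves implicit.
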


\begin{proof}
The constant maps form an increasing chain
\[
\bar a_0\le \bar a_1\le\cdots
\]
whose supremum is the constant map \(\bar{\topel_T}\). Since
\(h\le\bar{\topel_T}\), we have
\[
h
\le
\bigvee_{r\in\mathbb{N}}\bar a_r.
\]
As \(u\below h\), there exists \(r\in\mathbb{N}\) such that
\(u\le\bar a_r\). Hence
\[
u[D]\subseteq\downarrow_Ta_r.
\]
The principal lower set \(\downarrow_Ta_r\) is finite and does not
contain \(\topel_T\). Therefore \(u[D]\) is finite and
\(\topel_T\notin u[D]\).
\end{proof}
\begin{lemma}
\label{lemma:directed-waybelow}
For every \(h\in[D\to T]\), the set
\(\waybelowset h\) is directed.
\end{lemma}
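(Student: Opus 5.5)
We show that \(\waybelowset h\) is nonempty and that any two of its members have a common upper bound in it. The common bound will be a single local truncation \(p_{k,K}=\tau_K\circ h\circ f_k\) from Lemma~\ref{lemma:local-truncation}, with \(k\) and \(K\) chosen large enough. We do not try to show that the family of all \(p_{i,N}\) is itself directed. Indeed, \(\tau_N\) and \(\tau_K\) are not comparable for \(N<K\), because \(\tau_N(b_{N+1})=a_{N+1}\) while \(\tau_K(b_{N+1})=b_{N+1}\).

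Nonemptiness is immediate. The constant map \(\bar\bot\) is way below \(h\): for any directed family \((g_\lambda)\) with \(h\le\bigvee g_\lambda\), we have \(\bar\bot\le g_\lambda\) for every \(\lambda\).

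Now let \(u,v\below h\), and fix a finitely separating approximate identity \((f_i,M_i)_{i\in I}\) on \(D\).

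\emph{Step 1 (a common \(f_k\)).} The family \((h\circ f_i)_{i\in I}\) is directed in \([D\to T]\), and by Scott continuity of \(h\) its pointwise supremum is \(h\circ\id_D=h\). Since \(u\below h\), there is \(i\) with \(u\le h\circ f_i\). Likewise there is \(j\) with \(v\le h\circ f_j\). Choose \(k\ge i,j\) in the directed index set; then \(u,v\le h\circ f_k\).

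\emph{Step 2 (a common finite level).} By Lemma~\ref{lemma:waybelow-finite-level}, there are \(r_1,r_2\) with \(u\le\bar a_{r_1}\) and \(v\le\bar a_{r_2}\). Put \(r=\max(r_1,r_2)\). Since \(a_{r_1},a_{r_2}\le a_r\), both \(u\) and \(v\) are bounded by \(\bar a_r\).

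\emph{Step 3 (the upper bound).} Because \(M_k\) is finite, we can choose \(K\ge r\) that also exceeds the rank of every non-top value \(h(m)\) with \(m\in M_k\). This \(K\) satisfies the hypothesis of Lemma~\ref{lemma:local-truncation}, so \(w:=\tau_K\circ h\circ f_k\) satisfies \(w\below h\).

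It remains to show \(u\le w\); the argument for \(v\) is identical. Fix \(x\in D\) and put \(t=h(f_k(x))\). We know \(u(x)\le t\) and \(u(x)\le a_r\). There are two cases.
\begin{itemize}
\item If \(t\ne\topel_T\) and the level of \(t\) is at most \(K\), then \(\tau_K\) fixes \(t\), so \(w(x)=t\ge u(x)\).
\item Otherwise \(t=\topel_T\) or \(t\) lies above level \(K\), and then \(w(x)=a_{K+1}\). Since \(r\le K<K+1\), we have \(u(x)\le a_r\le a_{K+1}=w(x)\).
\end{itemize}
Hence \(u,v\le w\below h\), which proves that \(\waybelowset h\) is directed.

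The only delicate point is Step 3. The truncation level must be taken at least \(r\), so that whatever \(\tau_K\) collapses to \(a_{K+1}\) still dominates the uniformly bounded values of \(u\) and \(v\). Lemma~\ref{lemma:waybelow-finite-level} is exactly what makes such a choice possible, and Lemma~\ref{lemma:local-truncation} allows any sufficiently large \(K\).
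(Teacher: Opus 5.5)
Your proof is correct and follows the paper's argument step for step: you pass to a common $h\circ f_k$ dominating both $u$ and $v$, bound them at a finite level via Lemma~\ref{lemma:waybelow-finite-level}, and take the single local truncation $\tau_K\circ h\circ f_k$ with $K$ large enough as a common upper bound in $\waybelowset h$. The only differences are cosmetic: you use the uniform bound $\bar a_r$ where the paper bounds the ranks of the finite images, and you get nonemptiness from the constant map $\bar\bot$ where the paper uses Lemma~\ref{lemma:local-truncation}.
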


\begin{proof}
Let \(u,v\below h\). Since
\[
h
=
\bigvee_{i\in I} h\circ f_i
\]
and the family \((h\circ f_i)_{i\in I}\) is directed, there exist
\(i_u,i_v\in I\) such that
\[
u\le h\circ f_{i_u}
\qquad\text{and}\qquad
v\le h\circ f_{i_v}.
\]
By directedness of the family \((f_i)_{i\in I}\), we may choose
\(i\in I\) such that
\[
f_{i_u},f_{i_v}\le f_i.
\]
Since \(h\) is monotone, it follows that
\[
u,v\le h\circ f_i.
\]

By Lemma~\ref{lemma:waybelow-finite-level}, the images \(u[D]\) and
\(v[D]\) are finite and do not contain \(\topel_T\). Choose
\(N\in\mathbb{N}\) sufficiently large that

\begin{enumerate}
\item every nonbottom element of \(u[D]\cup v[D]\) has rank at most
\(N\); and
\item every element \(m\in M_i\) for which \(h(m)\ne\topel_T\) satisfies
\(\rank(h(m))\le N\).
\end{enumerate}

Define
\[
p
:=
\tau_N\circ h\circ f_i.
\]
By Lemma~\ref{lemma:local-truncation}, we have \(p\below h\).

We claim that \(u\le p\). Let \(x\in D\), and put
\(t=h(f_i(x))\). Since \(u\le h\circ f_i\), we have \(u(x)\le t\).

If \(t\) lies at a level at most \(N\), then \(\tau_N(t)=t\), and hence
\[
u(x)\le t=p(x).
\]

If \(t\) lies above level \(N\), or if \(t=\topel_T\), then \(u(x)\) is
either \(\bottom_T\) or lies at a level at most \(N\). Therefore
\[
u(x)\le a_{N+1}=\tau_N(t)=p(x).
\]
Thus \(u\le p\). The same argument gives \(v\le p\).

Hence every two elements of \(\waybelowset h\) have an upper bound in
\(\waybelowset h\). Moreover, \(\waybelowset h\) is nonempty by
Lemma~\ref{lemma:local-truncation}. Therefore
\(\waybelowset h\) is directed.
\end{proof}

The previous two lemmas establish the essential approximation
properties of Scott-continuous maps into Plotkin's tie. Every compact
approximant is localized within a finite layer of the tie, and these
approximants form a directed family. We are therefore ready to prove
that every Scott-continuous map is the directed supremum of its
way-below approximants, thereby establishing the continuity of the
Scott function space \([D\to T]\).

\subsection{Continuity of the function space}
\label{subsec:continuity-function-space}

The preceding subsections establish the two ingredients required for
continuity. The local truncation lemma provides a directed family of
finite-image approximants below every Scott-continuous map, while the
directedness lemma shows that these approximants form a directed set.
The remainder of the argument follows the standard characterization of
continuous dcpos: we prove that every Scott-continuous map is the
directed supremum of the maps way below it. This establishes the
continuity of the Scott function space \([D\to T]\).

\begin{theorem}
\label{thm:tie-target-continuous}
If \(D\) is an FS-domain, then the Scott function space
\([D\to T]\) is continuous.
\end{theorem}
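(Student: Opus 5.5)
Since Lemma~\ref{lemma:directed-waybelow} already shows that \(\waybelowset h\) is directed for every \(h\in[D\to T]\), it remains to show \(h=\bigvee\waybelowset h\). Every element of \(\waybelowset h\) lies below \(h\), so \(\bigvee\waybelowset h\le h\) pointwise, and only the reverse inequality is needed. The plan is to exhibit, for each \(x\in D\), maps \(p_{i,N}\) from Lemma~\ref{lemma:local-truncation} whose values at \(x\) approach \(h(x)\). Suprema in \([D\to T]\) are pointwise, so this suffices.

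Fix \(x\in D\), and let \((f_i,M_i)_{i\in I}\) be a finitely separating approximate identity on \(D\). For each \(i\in I\) and each \(N\) exceeding the ranks of the non-top values \(h(m)\), \(m\in M_i\), Lemma~\ref{lemma:local-truncation} gives \(p_{i,N}=\tau_N\circ h\circ f_i\below h\). The family \(\{p_{i,N}(x)\}\) lies in the directed set \((\waybelowset h)(x)\), so its supremum is at most \(\bigl(\bigvee\waybelowset h\bigr)(x)\). By Scott continuity of \(h\) we have \(h(x)=\bigvee_i h(f_i(x))\), and we split into two cases.

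\emph{Case \(h(x)\neq\topel_T\).} The value \(h(x)\) is compact, so some \(i\) satisfies \(h(f_i(x))=h(x)\). Taking \(N\ge\rank(h(x))\) as well, \(\tau_N\) fixes \(h(x)\), and therefore \(p_{i,N}(x)=h(x)\).

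\emph{Case \(h(x)=\topel_T\).} Here either \(h(f_i(x))=\topel_T\) for some \(i\), or the ranks of \(h(f_i(x))\) are unbounded in \(i\). In both situations, for every \(n\) we can find \(i\) and \(N\ge n\) with \(p_{i,N}(x)\) at level \(\ge n\). If \(h(f_i(x))=\topel_T\), increasing \(N\) gives \(p_{i,N}(x)=a_{N+1}\). Otherwise we choose \(i\) with \(\rank(h(f_i(x)))\ge n\) and \(N\ge \rank(h(f_i(x)))\), so that \(p_{i,N}(x)=h(f_i(x))\). Elements of unbounded level have supremum \(\topel_T\), so the supremum of these values is \(\topel_T=h(x)\).

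This case is the main obstacle, because the approximants can never attain \(\topel_T\), so we must check that the freedom to enlarge \(N\) independently of \(i\) really produces unbounded levels. Lemma~\ref{lemma:local-truncation} only requires \(N\) to be \emph{large enough}, so enlarging it is permitted. Combining both cases gives \(h\le\bigvee\waybelowset h\), and hence \([D\to T]\) is continuous.
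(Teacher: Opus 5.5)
Your proposal is correct and follows essentially the same route as the paper. Both arguments combine the directedness lemma with the local-truncation approximants \(p_{i,N}=\tau_N\circ h\circ f_i\below h\), choosing \(i\) from \(h(x)=\bigvee_i h(f_i(x))\) and then enlarging \(N\) independently of \(i\).

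The only difference is presentation. The paper bounds each \(t\below h(x)\) by some \(p_{i,N}(x)\) and then invokes continuity of \(T\). You instead split into the cases \(h(x)\) compact and \(h(x)=\topel_T\). In the second case, strictly speaking, the family \(\{p_{i,N}(x)\}\) need not have a supremum, since \(T\) is not a lattice. All you need, and all your argument actually uses, is that \((\bigvee\waybelowset h)(x)\) is an upper bound of values of unbounded level, and hence equals \(\topel_T\).
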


\begin{proof}
Let \(h\in[D\to T]\). By
Lemma~\ref{lemma:directed-waybelow}, the set
\(\waybelowset h\) is directed. Since every \(u\below h\) satisfies
\(u\le h\), we have
\[
\bigvee\waybelowset h\le h.
\]
It remains to prove the reverse inequality.

Fix \(x\in D\), and let \(t\below h(x)\). Since
\((f_i)_{i\in I}\) is an approximate identity on \(D\),
\[
x=\bigvee_{i\in I}f_i(x).
\]
Scott continuity of \(h\) therefore gives
\[
h(x)=\bigvee_{i\in I}h(f_i(x)).
\]
As \(t\below h(x)\), there exists \(i\in I\) such that
\[
t\le h(f_i(x)).
\]

Choose \(N\in\mathbb{N}\) sufficiently large that

\begin{enumerate}
\item every \(m\in M_i\) with \(h(m)\ne\topel_T\) satisfies
\(\rank(h(m))\le N\); and
\item \(t=\bottom_T\), or \(\rank(t)\le N\).
\end{enumerate}

Set
\[
p_{i,N}:=\tau_N\circ h\circ f_i.
\]
By Lemma~\ref{lemma:local-truncation},
\(p_{i,N}\below h\).

We claim that \(t\le p_{i,N}(x)\). Put \(y=h(f_i(x))\), so that
\(t\le y\). If \(y\) lies at a level at most \(N\), then
\(\tau_N(y)=y\), and hence
\[
t\le y=p_{i,N}(x).
\]
If \(y\) lies above level \(N\), or \(y=\topel_T\), then
\(\tau_N(y)=a_{N+1}\). Since \(t=\bottom_T\) or \(t\) lies at a level
at most \(N\), we again obtain
\[
t\le a_{N+1}=p_{i,N}(x).
\]

Thus every \(t\below h(x)\) lies below \(u(x)\) for some
\(u\below h\). Since \(T\) is continuous,
\[
h(x)
=
\bigvee\waybelowset{h(x)}
\le
\bigvee_{u\below h}u(x).
\]
The reverse inequality follows from \(u\le h\) for every
\(u\below h\). Therefore
\[
h(x)=\bigvee_{u\below h}u(x).
\]
Since this holds for every \(x\in D\), directed suprema in
\([D\to T]\) being computed pointwise give
\[
h=\bigvee\waybelowset h.
\]

Hence every element of \([D\to T]\) is the directed supremum of the
elements way below it. Therefore \([D\to T]\) is continuous.
\end{proof}

Theorem~\ref{thm:tie-target-continuous} shows that Plotkin's tie behaves
exceptionally well as a target: continuity of \([D\to T]\) holds for
every FS-domain \(D\), even though the corresponding one-sided closure
statement fails for general pointed continuous targets. This positive
result might suggest that \(T\) itself belongs to one of the standard
classes supporting function-space closure. We conclude the section by
showing that this is not the case: Plotkin's tie is neither an
FS-domain nor an RB-domain.

\subsection{Failure of the FS and RB properties}
\label{subsec:tie-not-fs}

Theorem~\ref{thm:tie-target-continuous} establishes that Plotkin's tie
behaves remarkably well as the target of Scott function spaces.
One might therefore expect this behaviour to stem from membership in one
of the familiar cartesian-closed classes of domains. Surprisingly, this
is not the case. In this subsection, we show that Plotkin's tie is
neither an FS-domain nor an RB-domain. Consequently, the continuity of
\([D\to T]\) for every FS-domain \(D\) cannot be explained by the
existing closure theory, but instead reflects the distinctive local
approximation structure of Plotkin's tie.

\begin{theorem}
\label{thm:tie-not-fs-rb}
Plotkin's tie \(T\) is neither an FS-domain nor an RB-domain.
\end{theorem}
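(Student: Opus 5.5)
Since every RB-domain is an FS-domain (Section~\ref{subsec:rb}), it suffices to show that \(T\) is not an FS-domain; the failure of the RB property then follows at once. The plan is a contradiction argument. Suppose \((f_i)_{i\in I}\) is a directed family of Scott-continuous maps \(T\to T\), each finitely separated from \(\id_T\) by a finite set \(M_i\), with \(\bigvee_i f_i=\id_T\).

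\emph{Step 1: force a large value at \(\topel_T\).} Because \(a_1\) is compact and \(a_1\le\topel_T=\bigvee_i f_i(\topel_T)\), there is an index \(i\) with \(f_i(\topel_T)\ge a_1\). In particular \(f_i(\topel_T)\ne\bottom_T\).

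\emph{Step 2: a compact value.} Use finite separation at \(\topel_T\): choose \(m\in M_i\) with \(f_i(\topel_T)\le m\le\topel_T\). We want \(f_i(\topel_T)\ne\topel_T\). Scott continuity gives \(f_i(\topel_T)=\bigvee_n f_i(a_n)\). Suppose \(f_i(\topel_T)=\topel_T\). Then \(f_i(a_n)\) has unbounded rank or equals \(\topel_T\) for some \(n\), so for each \(n\) we can find \(k\) with \(\rank f_i(a_k)>\) any prescribed bound. Separation at \(a_k\) gives some \(m'\in M_i\) with \(f_i(a_k)\le m'\le a_k\). Hence \(\rank m'\ge\rank f_i(a_k)\), and \(m'\) is not \(\topel_T\) because \(m'\le a_k\). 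This would require infinitely many distinct elements of \(M_i\), a contradiction. Therefore \(f_i(\topel_T)=c\) is a compact element at some finite level \(N\), and \(c\ge a_1\).

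\emph{Step 3: the key obstacle, exploiting the crossing structure.} Monotonicity gives \(f_i(a_n),f_i(b_n)\le c\) for all \(n\). Continuity at \(\topel_T\) along the chain \(a_1\le a_2\le\cdots\) shows that \(f_i(a_n)=c\) for all large \(n\), and similarly \(f_i(b_n)=c\) for large \(n\). Take \(n>N\) large. Separation at \(a_n\) gives \(m\in M_i\) with \(c\le m\le a_n\). The same holds for every larger \(n\). Since \(M_i\) is finite, we obtain \(m\in M_i\) lying below \(a_n\) and \(b_n\) for infinitely many \(n\) (and above \(c\)). This by itself is no contradiction, since lower levels lie below both branches.

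To obtain a contradiction we need more than one map \(f_i\). The approach will be to play the whole directed family against the finite separators. By Step 2, every \(f_j\) with \(j\ge i\) has \(f_j(\topel_T)\) compact and \(f_j(\topel_T)\ge c\). Taking \(j\) large enough that \(f_j(a_{N+1})=a_{N+1}\), which is possible because \(a_{N+1}\) is compact and \(\bigvee_j f_j(a_{N+1})=a_{N+1}\), forces \(f_j(\topel_T)\ge a_{N+1}\), so \(f_j(\topel_T)\) has rank \(N'>N\). By choosing \(j\) with \(f_j(a_{N'+1})=a_{N'+1}\), we get \(f_j(\topel_T)\ge a_{N'+1}\), which contradicts the fact that \(f_j(\topel_T)\) sits at level \(N'\).

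I expect Step 2 to be the main obstacle, namely showing that \(f_j(\topel_T)\) is never \(\topel_T\). The finite-rank bound there must be derived carefully from the separators. Once it is established, the argument just given closes the proof: for any \(j\), whatever level \(N'\) the value \(f_j(\topel_T)\) occupies, we pick \(j'\ge j\) with \(f_{j'}(a_{N'+1})=a_{N'+1}\). Then \(f_{j'}(\topel_T)\ge a_{N'+1}\), so its rank grows without bound. The contradiction must come from a single map, so I would derive it for that single \(f_{j'}\): Step 2 applied to \(f_{j'}\) says its value at \(\topel_T\) is compact, at some level \(L\). Choosing \(j''\ge j'\) with \(f_{j''}(a_{L+1})=a_{L+1}\) then forces \(f_{j''}(\topel_T)\ge a_{L+1}\) strictly above \(f_{j'}(\topel_T)\), which is consistent. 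So the real contradiction must be sought within one map via the crossing: I would aim to show that \(f(\topel_T)=c\) compact together with \(f(a_n)=c\le a_n\) and \(f(b_n)=c\le b_n\) cannot hold for a map that is the identity on \(a_1,\dots,a_{N+1},b_1,\dots,b_{N+1}\). Indeed \(a_{N+1}=f(a_{N+1})\le f(\topel_T)=c\) has rank at most \(N\), a contradiction. Choosing the index so that \(f\) fixes levels up to one beyond the level of \(f(\topel_T)\) requires a fixed-point argument on levels, and that is the delicate point.
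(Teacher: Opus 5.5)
Your proposal never reaches a contradiction, so the proof is incomplete. As you observe yourself, the iteration over indices in Step 3 is consistent: a directed family whose values \(f_j(\topel_T)\) climb to ever higher finite levels violates nothing you have established. The step you defer is not a delicate technicality; it is the whole content of the theorem. In the form you state it, it is impossible. If \(f(\topel_T)=c\) has level \(N\), then monotonicity already gives \(f(a_{N+1})\le c\). So no choice of index can make a single \(f_j\) fix a level above that of \(f_j(\topel_T)\). Asking for such an index is asking for the contradiction itself, and directedness alone will not supply it. (Your Step 2 is correct, but it does not help close the argument.)

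The missing idea is that the crossing order propagates fixed points upward.
\begin{itemize}
\item Finite separation gives \(f\le\id_T\). If \(f\) is monotone with \(f\le\id_T\) and fixes both \(a_k\) and \(b_k\), then \(a_k,b_k\le f(a_{k+1})\le a_{k+1}\).
\item The only element of \(T\) in that interval is \(a_{k+1}\), so \(f(a_{k+1})=a_{k+1}\). Likewise \(f(b_{k+1})=b_{k+1}\), and by induction \(f\) fixes every \(a_n,b_n\) with \(n\ge k\).
\item Some single \(f_j\) fixes both \(a_1\) and \(b_1\). Indeed, \(a_1,b_1\) are compact and \(\bigvee_j f_j=\id_T\), so there are \(j_1,j_2\) with \(a_1\le f_{j_1}(a_1)\) and \(b_1\le f_{j_2}(b_1)\). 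Take \(f_j\) above \(f_{j_1}\) and \(f_{j_2}\); then \(a_1\le f_j(a_1)\le a_1\), and similarly for \(b_1\).
\item Hence \(f_j(a_n)=a_n\) for all \(n\). Separation at \(a_n\) then forces \(a_n\in M_j\) for every \(n\), contradicting the finiteness of \(M_j\).
\end{itemize}
This makes your Steps 1--3 unnecessary.

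For comparison, the paper does not argue directly. It cites that every algebraic FS-domain is bifinite and that \(T\) is not bifinite, then handles the RB case exactly as you do. Your direct route, once repaired as above, is elementary and self-contained. It also yields the sharper fact that no map finitely separated from \(\id_T\) can fix both elements of any level.
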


\begin{proof}
Recall that \(T\) is an algebraic domain but is not bifinite. On the
other hand, every algebraic FS-domain is bifinite
\cite{AbramskyJung1994}. It follows that \(T\) cannot be an FS-domain.

Moreover, every RB-domain is an FS-domain. Indeed, an RB-domain is a
Scott-continuous retract of a bifinite domain, every bifinite domain is
an FS-domain, and the class of FS-domains is closed under
Scott-continuous retracts
\cite[Proposition~4.2.12]{AbramskyJung1994}. Since \(T\) is not an
FS-domain, it cannot be an RB-domain.
\end{proof}

Theorem~\ref{thm:tie-not-fs-rb}, together with
Theorem~\ref{thm:tie-target-continuous}, shows that the continuity of
\([D\to T]\) for every FS-domain \(D\) is not a consequence of
Plotkin's tie belonging to one of the standard cartesian-closed classes.
Rather, it arises from the local truncation property established above:
each way-below approximation to a map into \(T\) can be confined to a
suitable finite layer, even though no global FS- or RB-approximation of
\(T\) exists.


\section{Plotkin's Tie as Source and FS Targets}
\label{sec:tie-source}

The preceding section showed that Plotkin's tie behaves exceptionally
well as the target of Scott function spaces with FS sources. We now
turn to the reverse configuration and consider Scott-continuous maps
from \(T\) into an arbitrary FS-domain \(L\). Although the conclusion is
again that the function space is continuous, the mechanism is entirely
different. The finite-layer truncations used when \(T\) was the target
are no longer available on the codomain side; instead, the proof must
exploit the finite separation structure of \(L\) together with the
particular order-theoretic shape of the source \(T\).

The first key observation is that a Scott-continuous map
\(r:L\to L\) that is finitely separated from \(\id_L\) satisfies
\(r(x)\below x\) for every \(x\in L\). This pointwise approximation
property does not by itself imply that \(r\circ h\below h\) for an
arbitrary source domain. For Plotkin's tie, however, the infinitely
many pointwise conditions can be reduced to finitely many initial
values together with the two tails represented by the ascending chains
\((a_n)_{n\in\mathbb N}\) and \((b_n)_{n\in\mathbb N}\), both of which
have supremum \(\topel_T\). This finite reduction yields the uniform
way-below estimate required in the function space.

Our proof proceeds in four stages. We first show that finite separation
in the target induces pointwise way-below approximation. We then use
the two-branch structure of Plotkin's tie to strengthen this pointwise
information to a function-space estimate. Next, we establish the
directedness of the way-below approximants. Finally, applying these
results to a finitely separating approximate identity on \(L\), we
prove that every element of \([T\to L]\) is the directed supremum of
the maps way below it, and hence that \([T\to L]\) is continuous.

\subsection{Finite separation induces pointwise approximation}
\label{subsec:pointwise-approximation}

The proof of continuity for the Scott function space \([T\to L]\)
begins with a simple but fundamental observation about finitely
separated maps. Let \(L\) be an FS-domain, and let
\(r:L\to L\) be Scott-continuous. If \(r\) is finitely separated from
\(\id_L\), then every point of \(L\) is approximated by its image under
\(r\) in the way-below relation. Thus finite separation, which is
defined globally in terms of a finite separating set, immediately
produces a pointwise approximation property.

This pointwise estimate is the starting point of the argument.
Although it is insufficient to conclude that
\(r\circ h\below h\) for an arbitrary source domain, it provides the
local approximation from which the special order-theoretic structure of
Plotkin's tie will later allow a global function-space approximation to
be recovered.

We begin by making this pointwise approximation property precise.
\begin{lemma}
\label{lem:cofinal-colour}
Let \(A\) be a directed set, let \(M\) be finite, and let
\(c:A\to M\).
Then there exists \(m\in M\) such that
\[
\{a\in A:c(a)=m\}
\]
is cofinal in \(A\).
\end{lemma}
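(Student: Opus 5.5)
The plan is a pigeonhole argument by contradiction. A subset \(C\subseteq A\) is cofinal if for every \(a\in A\) there is \(a'\in C\) with \(a\le a'\). So, assuming \(A\) is nonempty as directed sets are, I will suppose that for no \(m\in M\) is the fibre \(c^{-1}(m)=\{a\in A:c(a)=m\}\) cofinal. Then, for each \(m\in M\), I choose a witness \(a_m\in A\) such that no \(a'\ge a_m\) satisfies \(c(a')=m\).

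Next I combine the witnesses. Since \(M\) is finite and \(A\) is directed, the finite family \(\{a_m:m\in M\}\) has an upper bound \(a^\ast\in A\). When \(M\) is empty, \(A\) cannot be nonempty while \(c:A\to M\) exists, so I may take \(M\) nonempty; the upper bound is then obtained by iterating pairwise upper bounds, exactly as in Lemma~\ref{lemma:finite-directed-greatest}. Now set \(m^\ast:=c(a^\ast)\in M\). Since \(a^\ast\ge a_{m^\ast}\), the element \(a'=a^\ast\) lies above \(a_{m^\ast}\) and has colour \(m^\ast\). This contradicts the choice of \(a_{m^\ast}\), so some fibre is cofinal.

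I will also record the useful consequence that a cofinal subset of a directed set is itself directed. Any two of its elements have an upper bound in \(A\), and this upper bound is dominated by an element of the subset. In the same way, a monotone net restricted to a cofinal subset has the same supremum. This is how I expect the lemma to be used later, on the chains \((a_n)\) and \((b_n)\) and on directed families in \([T\to L]\).

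No step is a real obstacle. The only care needed is with the edge cases, namely that \(A\) is nonempty and that the finite upper bound exists, both of which follow from directedness.
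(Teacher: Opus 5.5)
Your proof is correct and follows the paper's argument: assume no fibre is cofinal, choose witnesses $a_m$, take a common upper bound $a$ of them in $A$, and get a contradiction at the colour $m_0=c(a)$. Your explicit treatment of the empty-$M$ case and your remark that cofinal subsets preserve directedness and suprema of monotone maps are harmless additions the paper leaves implicit.
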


\begin{proof}
For each \(m\in M\), set
\[
A_m:=\{a\in A:c(a)=m\}.
\]
Suppose, for contradiction, that no \(A_m\) is cofinal in \(A\).
Then, for every \(m\in M\), there exists \(a_m\in A\) such that no
element of \(A_m\) lies above \(a_m\).

Since \(M\) is finite and \(A\) is directed, there exists \(a\in A\)
such that \(a_m\le a\) for every \(m\in M\). Let \(m_0:=c(a)\). Then
\(a\in A_{m_0}\) and \(a_{m_0}\le a\), contradicting the choice of
\(a_{m_0}\).

Hence \(A_m\) is cofinal in \(A\) for some \(m\in M\).
\end{proof}

\begin{lemma}
\label{lemma:pointwise-waybelow}
Let \(L\) be a dcpo, and let \(r:L\to L\) be a Scott-continuous map
that is finitely separated from \(\id_L\). Then
\(
r(x)\below x
\)
for every \(x\in L\).
\end{lemma}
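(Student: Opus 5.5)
Fix \(x\in L\) and a directed set \(A\subseteq L\) with \(x\le\bigvee A\). We must find \(a\in A\) with \(r(x)\le a\). Let \(M\) be a finite set separating \(r\) from \(\id_L\): for every \(y\in L\) there is \(m\in M\) with \(r(y)\le m\le y\). The plan is to apply this separation to the elements of \(A\) themselves and then use Lemma~\ref{lem:cofinal-colour} to choose one separating element that works cofinally often.

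For each \(a\in A\), choose \(c(a)\in M\) with \(r(a)\le c(a)\le a\). This defines a colouring \(c:A\to M\). By Lemma~\ref{lem:cofinal-colour}, there is \(m\in M\) such that \(A_m:=\{a\in A:c(a)=m\}\) is cofinal in \(A\). Since \(A_m\) is cofinal in the directed set \(A\), it is itself directed, and \(\bigvee A_m=\bigvee A\). Applying Scott continuity of \(r\) to \(A_m\) gives
\[
r\Bigl(\bigvee A\Bigr)=\bigvee_{a\in A_m}r(a)\le m,
\]
because every \(a\in A_m\) satisfies \(r(a)\le c(a)=m\).

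Monotonicity of \(r\) together with \(x\le\bigvee A\) then yields \(r(x)\le r(\bigvee A)\le m\). Finally, choose any \(a\in A_m\), which is possible since \(A_m\) is nonempty. Then \(m=c(a)\le a\), so \(r(x)\le a\) with \(a\in A\), and hence \(r(x)\below x\).

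The only delicate point is that the separating elements \(c(a)\) vary with \(a\), so we need a single \(m\) that is simultaneously an upper bound of \(r[A_m]\) and below some member of \(A\). Lemma~\ref{lem:cofinal-colour} resolves exactly this, and cofinality is what guarantees that \(A_m\) is directed with the same supremum as \(A\). No continuity of \(L\) is needed; the argument works for any dcpo.
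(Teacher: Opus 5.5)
Your proof is correct and follows the paper's argument essentially step for step. You colour each \(a\in A\) by a separating element, use Lemma~\ref{lem:cofinal-colour} to obtain a cofinal colour class \(A_m\), bound \(r(\bigvee A)\) by \(m\), and conclude from \(m\le a\) for any \(a\in A_m\). The only cosmetic difference is that you apply Scott continuity directly to the directed cofinal subset \(A_m\), whereas the paper applies it to \(A\) and then restricts the supremum to \(A_m\).
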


\begin{proof}
Let \(x\in L\), and let \(A\subseteq L\) be directed with
\(x\le\bigvee A\). We must show that \(r(x)\le a\) for some
\(a\in A\).

Let \(M\) be a finite separating set for \(r\). For each \(a\in A\),
choose an element \(m(a)\in M\) satisfying
\begin{equation}
\label{eq:separator-at-a}
r(a)\le m(a)\le a.
\end{equation}

By Lemma~\ref{lem:cofinal-colour}, there exists
\(m\in M\) such that the subset
\[
A_m
=
\{\,a\in A:m(a)=m\,\}
\]
is cofinal in \(A\).

Since \(r\) is Scott-continuous and \(A_m\) is cofinal in \(A\),
\[
\begin{aligned}
r(x)
&\le
r\!\left(\bigvee A\right)\\
&=
\bigvee_{a\in A}r(a)\\
&=
\bigvee_{a\in A_m}r(a)\\
&\le
m.
\end{aligned}
\]

Now choose any \(a\in A_m\). By
\eqref{eq:separator-at-a},
\(
m\le a.
\)
Hence
\[
r(x)\le m\le a.
\]
Therefore \(r(x)\below x\).
\end{proof}

\subsection{Controlling maps on the two branches}
\label{subsec:two-branches}

The pointwise approximation property established in the previous
subsection does not immediately imply that
\(r\circ h\below h\) in the Scott function space \([T\to L]\).
Indeed, for a general source domain, verifying the way-below relation
would require controlling the values of \(h\) on infinitely many
independent points.

The essential feature of Plotkin's tie is that its infinite structure
is concentrated in two ascending chains,
\[
a_0\le a_1\le\cdots\le\topel_T,
\qquad
b_0\le b_1\le\cdots\le\topel_T,
\]
whose suprema are both equal to the greatest element
\(\topel_T\). Consequently, Scott continuity implies that the behaviour
of \(h\) on sufficiently high levels of each branch is already
determined by its value at \(\topel_T\). Only finitely many initial
elements of the two branches therefore require separate attention.

This observation allows the infinitely many pointwise estimates
\(r(h(x))\below h(x)\) to be reduced to finitely many conditions,
leading to a global way-below approximation in the function space.
The following lemma makes this reduction precise.

\begin{lemma}
\label{lemma:two-branch-control}
Let \(L\) be a continuous dcpo, let \(r:L\to L\) be a
Scott-continuous map that is finitely separated from \(\id_L\), and
let \(h:T\to L\) be Scott-continuous. Suppose that
\(\mathcal D\subseteq[T\to L]\) is directed and
\[
h\le\bigvee\mathcal D.
\]
Then there exist \(g_0\in\mathcal D\) and \(N\in\mathbb N\) such that
\[
r(h(\topel_T))\le g_0(\topel_T),
\]
and, for every \(n\ge N\),
\[
r(h(a_n))\le g_0(a_n)
\qquad\text{and}\qquad
r(h(b_n))\le g_0(b_n).
\]
\end{lemma}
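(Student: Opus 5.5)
The plan is to exploit the fact that \(\topel_T\) is the supremum of each of the two chains \((a_n)\) and \((b_n)\). By Lemma~\ref{lemma:pointwise-waybelow}, \(r(y)\below y\) for every \(y\in L\); in particular \(r(h(\topel_T))\below h(\topel_T)\). I intend to obtain a single element \(g_0\in\mathcal D\) and a single value \(m\in L\) from the finite separating set \(M\) of \(r\) such that \(r(h(\topel_T))\le m\le g_0(\topel_T)\). The tail estimates will then be derived from the monotonicity of \(h\) and \(r\), together with a separating element reached on both chains.

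\textbf{Step 1.} I will separate along the chains in the target. Since \(h\) is Scott-continuous, \(h(\topel_T)=\bigvee_n h(a_n)=\bigvee_n h(b_n)\). Moreover, \(h\le\bigvee\mathcal D\), and suprema in \([T\to L]\) are pointwise. Hence
\[
h(\topel_T)\le\bigvee_{g\in\mathcal D,\,n\in\mathbb N} g(a_n),
\]
where the right-hand family is directed in \(L\), indexed by the product \(\mathcal D\times\mathbb N\). The same holds for the \(b_n\). I do not want to invoke way-below merely to bound \(r(h(\topel_T))\) by one value \(g(a_n)\). That alone would control only the single point \(a_n\), whereas the statement needs control at every \(a_k\) with \(k\ge N\).

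\textbf{Step 2.} This is the key step: I will reuse the argument of Lemma~\ref{lemma:pointwise-waybelow}. For each pair \((g,n)\), choose \(m(g,n)\in M\) with \(r(g(a_n))\le m(g,n)\le g(a_n)\). By Lemma~\ref{lem:cofinal-colour} there is \(m_a\in M\) whose colour class \(C_a\) is cofinal in \(\mathcal D\times\mathbb N\). Scott continuity of \(r\) then gives
\[
r(h(\topel_T))\le\bigvee_{(g,n)\in C_a} r(g(a_n))\le m_a .
\]
Fix \((g_a,N_a)\in C_a\), so that \(m_a\le g_a(a_{N_a})\). Doing the same on the \(b\)-chain yields \(m_b\) and \((g_b,N_b)\). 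Then choose \(g_0\in\mathcal D\) above \(g_a\) and \(g_b\), and set \(N:=\max(N_a,N_b)\).

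\textbf{Step 3.} Verification. For \(n\ge N\) we have \(a_n\ge a_{N_a}\), so
\[
r(h(a_n))\le r(h(\topel_T))\le m_a\le g_a(a_{N_a})\le g_0(a_n),
\]
by monotonicity of \(r\), \(h\) and \(g_0\). The \(b\)-branch is symmetric. At the top,
\[
r(h(\topel_T))\le m_a\le g_0(a_N)\le g_0(\topel_T).
\]

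The main obstacle is Step 2. A pointwise way-below bound at \(\topel_T\) alone does not pass to the tails, because \(g(\topel_T)\) can be large while every \(g(a_n)\) stays small. For this reason the directed set must be formed jointly over functions and chain indices. A reader might object that the bound on \(r(h(a_n))\) is obtained through \(r(h(\topel_T))\), which is possibly much bigger. This is harmless, since only an upper bound is required. Continuity of \(L\) is not even needed here.
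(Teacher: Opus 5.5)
Your proof is correct, but it obtains the tail index by a different route from the paper.

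The paper's route is a two-stage selection. With \(z=h(\topel_T)\), it interpolates in \(L\) to get \(u\) with \(r(z)\ll u\ll z\). It then uses \(u\ll z\le\bigvee_{g\in\mathcal D}g(\topel_T)\) to pick \(g_0\) with \(u\le g_0(\topel_T)\). Finally, \(r(z)\ll u\le g_0(\topel_T)=\bigvee_n g_0(a_n)=\bigvee_n g_0(b_n)\) yields \(N_a\) and \(N_b\). Choosing the function first and the index second is why the paper needs interpolation, and hence continuity of \(L\).

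You instead index a single directed family \(\{g(a_n):(g,n)\in\mathcal D\times\mathbb N\}\), whose supremum dominates \(z\), and select the function and the index at once. This needs no interpolation, so, as you say, your argument works for an arbitrary dcpo \(L\). The one routine detail to state is that the cofinal class \(C_a\) is itself directed, so Scott continuity of \(r\) applies to it.

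Your Step~2 is just the proof of Lemma~\ref{lemma:pointwise-waybelow} re-run against that joint family. You could cite the lemma directly: \(r(z)\ll z\le\bigvee_{(g,n)}g(a_n)\) gives \((g_a,N_a)\) with \(r(z)\le g_a(a_{N_a})\), with no detour through \(m_a\).

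Your stated reason for avoiding this is mistaken. You say a bound at a single point \(a_n\) controls only that point, but monotonicity of \(g_0\) and of \(r\circ h\) spreads it to every \(a_k\) with \(k\ge N_a\); your own Step~3 does exactly this. What is genuinely needed is the joint indexing, or the paper's interpolation. A bare inequality \(r(z)\le g(\topel_T)\) does not pass to the tails, and that part of your diagnosis is right.
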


\begin{proof}
Set
\(
z:=h(\topel_T).
\)
By Lemma~\ref{lemma:pointwise-waybelow},
\(r(z)\below z\). Since \(L\) is continuous, interpolation gives
\(u\in L\) such that
\[
r(z)\below u\below z.
\]

Evaluating \(h\le\bigvee\mathcal D\) at \(\topel_T\), we obtain
\[
z
\le
\bigvee_{g\in\mathcal D}g(\topel_T).
\]
Since \(u\below z\), there exists \(g_0\in\mathcal D\) such that
\(
u\le g_0(\topel_T).
\)

The two branches of Plotkin's tie have supremum \(\topel_T\). Hence
Scott continuity of \(g_0\) gives
\[
g_0(\topel_T)
=
\bigvee_{n\in\mathbb N}g_0(a_n)
=
\bigvee_{n\in\mathbb N}g_0(b_n).
\]
Because
\[
r(z)\below u\le g_0(\topel_T),
\]
there exist \(N_a,N_b\in\mathbb N\) such that
\[
r(z)\le g_0(a_{N_a})
\qquad\text{and}\qquad
r(z)\le g_0(b_{N_b}).
\]
Let
\(
N:=\max\{N_a,N_b\}.
\)
By monotonicity of \(g_0\), for every \(n\ge N\),
\[
r(z)\le g_0(a_n)
\qquad\text{and}\qquad
r(z)\le g_0(b_n).
\]

Since \(a_n,b_n\le\topel_T\), monotonicity of \(h\) and \(r\) yields
\[
r(h(a_n))\le r(z)
\qquad\text{and}\qquad
r(h(b_n))\le r(z).
\]
Consequently, for every \(n\ge N\),
\[
r(h(a_n))\le g_0(a_n)
\qquad\text{and}\qquad
r(h(b_n))\le g_0(b_n).
\]

Finally,
\[
r(h(\topel_T))
=
r(z)
\le
u
\le
g_0(\topel_T).
\]
Thus \(g_0\) simultaneously controls the greatest element and both
branches beyond the finite level \(N\).
\end{proof}

\begin{lemma}
\label{lemma:composition-waybelow}
Let \(L\) be a continuous dcpo, let \(r:L\to L\) be a
Scott-continuous map that is finitely separated from \(\id_L\), and
let \(h:T\to L\) be Scott-continuous. Then
\(
r\circ h\below h
\)
in the Scott function space \([T\to L]\).
\end{lemma}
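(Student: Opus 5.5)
To prove \(r\circ h\below h\), fix a directed family \(\mathcal D\subseteq[T\to L]\) with \(h\le\bigvee\mathcal D\). The goal is a single \(g\in\mathcal D\) with \(r(h(x))\le g(x)\) for every \(x\in T\). The plan is to split \(T\) into a cofinite part, controlled uniformly by Lemma~\ref{lemma:two-branch-control}, and a finite remainder, handled pointwise. Directedness of \(\mathcal D\) then merges the finitely many witnesses.

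\textbf{Step 1 (the tail and the top).} Apply Lemma~\ref{lemma:two-branch-control} to \(h\), \(r\) and \(\mathcal D\). This gives \(g_0\in\mathcal D\) and \(N\in\mathbb N\) with \(r(h(\topel_T))\le g_0(\topel_T)\) and \(r(h(a_n))\le g_0(a_n)\), \(r(h(b_n))\le g_0(b_n)\) for all \(n\ge N\). So \(g_0\) already controls every point of \(T\) except the finite set
\[
F:=\{\bottom_T\}\cup\{a_n,b_n:n<N\}.
\]

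\textbf{Step 2 (the finitely many initial points).} Fix \(x\in F\). Lemma~\ref{lemma:pointwise-waybelow} gives \(r(h(x))\below h(x)\). Evaluating at \(x\), we have \(h(x)\le\bigvee_{g\in\mathcal D}g(x)\), because directed suprema in \([T\to L]\) are computed pointwise. Hence there is \(g_x\in\mathcal D\) with \(r(h(x))\le g_x(x)\).

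\textbf{Step 3 (merging).} \(F\) is finite and \(\mathcal D\) is directed, so there is \(g\in\mathcal D\) lying above \(g_0\) and above every \(g_x\) with \(x\in F\). Since \(g\ge g_0\) pointwise, Step 1 gives \(r(h(x))\le g(x)\) for \(x=\topel_T\) and for all \(a_n,b_n\) with \(n\ge N\). Step 2 gives the same inequality for \(x\in F\). These cases exhaust \(T\), so \(r\circ h\le g\).

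Two routine checks remain. First, \(r\circ h\) is Scott-continuous as a composite. Second, \(r\circ h\le h\): from \(r(y)\le m\le y\) for the separating element \(m\), we get \(r\le\id_L\). Together these show \(r\circ h\below h\).

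I expect no genuine obstacle here. The substantive difficulty, controlling the infinitely many points near \(\topel_T\) by one member of \(\mathcal D\), is handled by Lemma~\ref{lemma:two-branch-control}. The only point needing care is that \(T\setminus F\) really consists of \(\topel_T\) together with the two tails beyond level \(N\), so that the finitely many pointwise witnesses from Step 2 suffice.
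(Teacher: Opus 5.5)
Your proposal is correct and follows the paper's proof essentially step for step. Lemma~\ref{lemma:two-branch-control} supplies a single $g_0\in\mathcal D$ controlling $\topel_T$ and both tails beyond level $N$, Lemma~\ref{lemma:pointwise-waybelow} handles each of the finitely many points of $\{\bottom_T\}\cup\{a_n,b_n:n<N\}$, and directedness of $\mathcal D$ merges the witnesses. Your extra check that $r\circ h\le h$ is harmless but unnecessary, since it already follows from the way-below condition applied to $\mathcal D=\{h\}$.
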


\begin{proof}
Let \(\mathcal D\subseteq[T\to L]\) be directed and suppose that
\[
h\le\bigvee\mathcal D.
\]
We must show that \(r\circ h\le g\) for some \(g\in\mathcal D\).

By Lemma~\ref{lemma:two-branch-control}, there exist
\(g_0\in\mathcal D\) and \(N\in\mathbb N\) such that
\[
r(h(\topel_T))\le g_0(\topel_T),
\]
and, for every \(n\ge N\),
\[
r(h(a_n))\le g_0(a_n)
\qquad\text{and}\qquad
r(h(b_n))\le g_0(b_n).
\]

It remains to control only the finite set
\[
F_N
:=
\{\bottom_T\}
\cup
\{\,a_n,b_n:0\le n<N\,\}.
\]
For each \(t\in F_N\), Lemma~\ref{lemma:pointwise-waybelow} gives
\[
r(h(t))\below h(t).
\]
Since
\(
h(t)
\le
\bigvee_{g\in\mathcal D}g(t),
\)
there exists \(g_t\in\mathcal D\) such that
\(
r(h(t))\le g_t(t).
\)

The family \(\mathcal D\) is directed and \(F_N\) is finite. Hence
there exists \(g\in\mathcal D\) such that
\[
g_0\le g
\qquad\text{and}\qquad
g_t\le g
\quad(t\in F_N).
\]

We now verify that \(r\circ h\le g\). If \(t\in F_N\), then
\[
r(h(t))
\le
g_t(t)
\le
g(t).
\]
For every \(n\ge N\), the choice of \(g_0\) gives
\[
r(h(a_n))
\le
g_0(a_n)
\le
g(a_n)
\]
and
\[
r(h(b_n))
\le
g_0(b_n)
\le
g(b_n).
\]
Finally,
\[
r(h(\topel_T))
\le
g_0(\topel_T)
\le
g(\topel_T).
\]

These cases exhaust all elements of \(T\). Therefore
\(
r\circ h\le g.
\)
It follows that \(r\circ h\below h\).
\end{proof}
\subsection{A directed family of way-below approximants}
\label{subsec:directed-waybelow-family}

\begin{theorem}
\label{thm:directed-waybelow-family}
Let \(L\) be an FS-domain, and let
\(
(r_i,M_i)_{i\in I}
\)
be a finitely separating approximate identity on \(L\).
Then, for every Scott-continuous map
\(
h:T\to L,
\)
the family
\(
(r_i\circ h)_{i\in I}
\)
is directed,
every map \(r_i\circ h\) satisfies
\[
r_i\circ h\below h,
\text{ and }
h
=
\bigvee_{i\in I}
r_i\circ h.
\]
\end{theorem}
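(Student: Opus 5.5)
The theorem makes three assertions, and I would verify them in that order: directedness, the way-below property, and the supremum formula. Each reduces to results already established or to the defining properties of a finitely separating approximate identity \((r_i,M_i)_{i\in I}\). Recall that this means each \(r_i\) is Scott-continuous and finitely separated from \(\id_L\) by \(M_i\). The family \((r_i)_{i\in I}\) is directed in \([L\to L]\), and \(\bigvee_{i\in I}r_i=\id_L\) pointwise. Since \(L\) is an FS-domain, it is in particular a continuous dcpo, so the hypotheses of Lemma~\ref{lemma:composition-waybelow} are available.

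\emph{Directedness.} Given \(i,j\in I\), I will choose \(k\in I\) with \(r_i,r_j\le r_k\). Then for every \(t\in T\) we have \(r_i(h(t))\le r_k(h(t))\) and \(r_j(h(t))\le r_k(h(t))\). Hence \(r_i\circ h,\,r_j\circ h\le r_k\circ h\). Nonemptiness follows from \(I\ne\emptyset\). Each \(r_i\circ h\) lies in \([T\to L]\), being a composite of Scott-continuous maps.

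\emph{Way-below.} For each fixed \(i\), the map \(r_i\) is Scott-continuous and finitely separated from \(\id_L\). Lemma~\ref{lemma:composition-waybelow} then applies directly with \(r=r_i\) and yields \(r_i\circ h\below h\) in \([T\to L]\). All of the real work is already contained in that lemma and in Lemma~\ref{lemma:two-branch-control}. Those results reduce the infinitely many pointwise conditions to the top element, the two tails of the tie, and a finite initial segment \(F_N\).

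\emph{Supremum.} Because the family is directed, its supremum in \([T\to L]\) is computed pointwise. For \(t\in T\), I will compute
\[
\Bigl(\bigvee_{i\in I}r_i\circ h\Bigr)(t)=\bigvee_{i\in I}r_i(h(t))=\id_L(h(t))=h(t),
\]
using \(\bigvee_i r_i=\id_L\) evaluated at the point \(h(t)\). I do not expect any step to be a genuine obstacle here. The only point needing care is to note that the approximate identity being pointwise directed makes \(\{r_i(h(t))\}_{i}\) directed for each \(t\), so that the pointwise supremum is legitimate and coincides with the supremum in the function space.
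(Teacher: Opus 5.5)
Your proposal is correct and follows the paper's proof essentially step for step: directedness of \((r_i\circ h)_{i\in I}\) from directedness of \((r_i)_{i\in I}\) and monotonicity of precomposition, \(r_i\circ h\below h\) by direct appeal to Lemma~\ref{lemma:composition-waybelow} (using that an FS-domain is continuous), and \(h=\bigvee_{i\in I}r_i\circ h\) by evaluating \(\bigvee_{i\in I}r_i=\id_L\) pointwise at \(h(t)\). The only difference is the order in which you verify the three claims, which does not matter.
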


\begin{proof}
By Lemma~\ref{lemma:composition-waybelow},
\(
r_i\circ h
\below
h
\)
for every \(i\in I\).
Since
\((r_i)_{i\in I}\)
is directed, composition with the Scott-continuous map \(h\) preserves
the pointwise order. Hence
\(
(r_i\circ h)_{i\in I}
\)
is directed.

Finally, for every \(t\in T\),
\[
\bigvee_{i\in I} r_i(h(t))=h(t),
\]
since \((r_i,M_i)_{i\in I}\) is a finitely separating approximate identity on \(L\).
Therefore
\[
\begin{aligned}
\left(
\bigvee_{i\in I}
r_i\circ h
\right)(t)
&=
\bigvee_{i\in I}
r_i(h(t))        \\
&=
h(t).
\end{aligned}
\]
Since directed suprema in
\([T\to L]\)
are computed pointwise,
\[
h
=
\bigvee_{i\in I}
r_i\circ h.
\]
\end{proof}

\subsection{Continuity of the function space}

We are now ready to establish the continuity of the Scott function
space. The preceding subsection shows that every Scott-continuous map
admits a directed family of approximants obtained by composing it with
the finitely separating approximate identity of the codomain. Since
each approximant lies way below the original map, continuity follows
immediately from the definition.

\begin{theorem}
\label{thm:function-space-continuous}
Let \(T\) be Plotkin's tie and let \(L\) be an FS-domain.
Then the Scott function space
\[
[T\to L]
\]
is a continuous dcpo.
\end{theorem}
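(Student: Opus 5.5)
The plan is to derive the statement directly from Theorem~\ref{thm:directed-waybelow-family}, once we know the function space is a dcpo. First, \([T\to L]\) is a dcpo, since directed suprema of Scott-continuous maps are computed pointwise (Section~\ref{subsec:scott-function-spaces}). Fix \(h\in[T\to L]\) and a finitely separating approximate identity \((r_i,M_i)_{i\in I}\) on \(L\), which exists because \(L\) is an FS-domain. Theorem~\ref{thm:directed-waybelow-family} then gives a directed family \((r_i\circ h)_{i\in I}\) contained in \(\waybelowset h\) whose supremum is \(h\). It remains to upgrade this to the two requirements of continuity: \(\waybelowset h\) is directed, and \(h=\bigvee\waybelowset h\).

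For directedness, take \(u,v\below h\). Since \(h=\bigvee_{i\in I}r_i\circ h\) is a directed supremum, the definition of way-below yields indices \(i_u,i_v\) with \(u\le r_{i_u}\circ h\) and \(v\le r_{i_v}\circ h\). Because \((r_i)\) is directed, we can choose \(i\) with \(r_{i_u},r_{i_v}\le r_i\). Monotonicity of composition then gives \(u,v\le r_i\circ h\), and \(r_i\circ h\below h\) by Lemma~\ref{lemma:composition-waybelow}. Nonemptiness holds since \(I\neq\emptyset\): any \(r_i\circ h\) lies in \(\waybelowset h\).

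For the supremum, note that each \(u\in\waybelowset h\) satisfies \(u\le h\), so \(\bigvee\waybelowset h\le h\). Conversely, \(\bigvee\waybelowset h\ge\bigvee_{i}r_i\circ h=h\), because every \(r_i\circ h\) belongs to \(\waybelowset h\). Hence \(h=\bigvee\waybelowset h\) with the supremum directed, and \([T\to L]\) is continuous.

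I expect no real obstacle at this stage. The substantive step, that \(r\circ h\below h\) uniformly over all of \(T\), was already handled by the two-branch reduction in Lemmas~\ref{lemma:two-branch-control} and~\ref{lemma:composition-waybelow}. The only point needing care is the continuity hypothesis on \(L\) used there for interpolation; an FS-domain is continuous by definition, so that hypothesis is satisfied.
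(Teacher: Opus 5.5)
Your proof is correct and follows the paper's route: both reduce the statement to Theorem~\ref{thm:directed-waybelow-family}, which supplies a directed family \((r_i\circ h)_{i\in I}\subseteq\waybelowset h\) with supremum \(h\). The only difference is that you explicitly check that this forces \(\waybelowset h\) to be directed with supremum \(h\), which is the paper's stated definition of continuity; the paper's proof takes this standard equivalence for granted.
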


\begin{proof}
Let \(h\in[T\to L]\). By
Theorem~\ref{thm:directed-waybelow-family},
\[
h=\bigvee_{i\in I}(r_i\circ h),
\]
where the family \((r_i\circ h)_{i\in I}\) is directed and each
\(r_i\circ h\below h\). Thus \(h\) is the directed supremum of
elements way below it. Since \(h\) was arbitrary, every element of
\([T\to L]\) has a directed family of way-below approximants whose
supremum is itself. Hence \([T\to L]\) is a continuous dcpo.
\end{proof}

The preceding theorem shows that Plotkin's tie, although neither an
FS-domain nor an RB-domain, is sufficiently well behaved as a source
space to preserve continuity of Scott function spaces with FS-domain
codomains. In the next section, we show that this phenomenon is
strictly weaker than preserving the FS property itself.

\section{The Closed-Disk--Plotkin-Tie Contrast}
\label{sec:disk-tie-contrast}

The preceding section shows that Plotkin's tie is an unexpectedly
well-behaved source domain for Scott function spaces with FS-domain
codomains. In particular, if \(L\) is an FS-domain, then
\([T\to L]\) is always a continuous dcpo.

A natural question is whether this conclusion can be strengthened.
Must the Scott function space itself be an FS-domain whenever the
codomain is? The answer is negative. In this section we return to the
planar closed-disk domain introduced in
Section~\ref{subsec:closed-disc} and use it to construct a concrete
counterexample.

The resulting example demonstrates that the preservation of continuity
established in Section~\ref{sec:tie-source} is strictly weaker than the
preservation of the FS property. Thus the approximation mechanism
developed in this paper captures precisely what is needed for
continuity, but does not force the existence of a finitely separating
approximate identity in the function space itself.

\subsection{Constructing the counterexample}

We begin with the planar closed-disk domain
\(\Disk\) introduced in
Section~\ref{subsec:closed-disc}. By
Proposition~\ref{prop:closed-disk-properties},
\(\Disk\) is an FS-domain but not an RB-domain.

To apply the results of the preceding section, we adjoin a greatest
element to obtain a pointed FS-domain. We first establish the basic
properties of this extension before considering the associated Scott
function space.

\begin{definition}
\label{def:disk-top}
Let \(\Disk\) denote the planar closed-disk domain introduced in
Definition~\ref{def:closed-disk}. Define
\[
\Disk^{\top}
:=
\Disk\cup\{\topel\},
\]
where \(\topel\notin\Disk\) is a new element satisfying
\[
x\le\topel
\qquad
\text{for every }x\in\Disk.
\]
Thus \(\Disk^{\top}\) is obtained from \(\Disk\) by adjoining a greatest
element.
\end{definition}

The introduction of a greatest element serves two purposes. First, it
produces a pointed FS-domain to which the results of
Section~\ref{sec:tie-source} apply directly. Second, the additional
top element simplifies the construction of the Scott function space
used in the counterexample developed below.

\begin{proposition}
\label{prop:disk-top-properties}
The extension \(\Disk^{\top}\) is a dcpo whose greatest element
\(\topel\) is compact. Moreover, if
\((f_i,M_i)_{i\in I}\) is a finitely separating approximate identity
on \(\Disk\), then the maps
\[
\widehat f_i(x)
:=
\begin{cases}
f_i(x), & x\in\Disk,\\
\topel, & x=\topel,
\end{cases}
\]
together with the finite sets \(M_i\cup\{\topel\}\), form a finitely
separating approximate identity on \(\Disk^{\top}\). Consequently,
\(\Disk^{\top}\) is an FS-domain.
\end{proposition}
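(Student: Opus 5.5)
This is essentially Lemma~\ref{lemma:compact-top-continuous} together with Lemma~\ref{lemma:compact-top-fs}, specialised to \(D=\Disk\). The plan is to verify each clause directly, following those proofs and paying attention to what is actually used about \(\Disk\).

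\emph{Step 1: dcpo structure and compactness of \(\topel\).} Let \(A\subseteq\Disk^{\top}\) be directed. If \(\topel\in A\), then \(\sup A=\topel\). Otherwise \(A\subseteq\Disk\), and its supremum in \(\Disk\) is also its supremum in \(\Disk^{\top}\). So every directed supremum of a subset of \(\Disk\) stays in \(\Disk\), and \(\sup A=\topel\) forces \(\topel\in A\). This is exactly compactness of \(\topel\).

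\emph{Step 2: the extended maps.} Monotonicity of \(\widehat f_i\) is immediate because \(\topel\) is sent to \(\topel\). For Scott continuity I would split on \(\sup A\). If \(\sup A\in\Disk\), then \(A\subseteq\Disk\) and continuity of \(f_i\) applies. If \(\sup A=\topel\), then Step~1 gives \(\topel\in A\), so \(\topel\) is the greatest element of \(\widehat f_i[A]\) and the supremum is preserved.

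\emph{Step 3: finite separation.} For \(x\in\Disk\), take the same \(m\in M_i\) as before, so that \(f_i(x)\le m\le x\). For \(x=\topel\), take \(m=\topel\). Thus \(M_i\cup\{\topel\}\) is a finite separating set.

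\emph{Step 4: the approximate identity.} Directedness of \((\widehat f_i)_{i\in I}\) is inherited from \((f_i)_{i\in I}\) on \(\Disk\) and is trivial at \(\topel\). The pointwise supremum is \(x\) on \(\Disk\) and \(\topel\) at \(\topel\), so it equals \(\id_{\Disk^{\top}}\).

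\emph{Step 5: continuity of \(\Disk^{\top}\).} The FS definition also requires \(\Disk^{\top}\) to be a continuous dcpo. This follows either from Lemma~\ref{lemma:compact-top-continuous} applied with \(D=\Disk\), using Proposition~\ref{prop:closed-disk-properties}, or from Lemma~\ref{lemma:pointwise-waybelow}, which gives \(\widehat f_i(x)\below x\) for every \(x\). The latter yields continuity directly from the directed family \((\widehat f_i(x))_{i\in I}\) with supremum \(x\). Hence \(\Disk^{\top}\) is an FS-domain.

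\emph{Main obstacle.} The only substantive point is the case \(\sup A=\topel\) in Step~2. It rests entirely on the fact that \(\Disk\) is closed under directed suprema inside \(\Disk^{\top}\), which holds because \(\Disk\) is a dcpo: a decreasing directed family of closed disks has nonempty compact intersection, which is again a disk or a point. I would double-check this nesting fact, since it is what prevents a directed subset of \(\Disk\) from having supremum \(\topel\). Once it is in place, the remaining steps are routine.
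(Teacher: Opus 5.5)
Your proposal is correct and follows the paper's proof essentially step for step: the dcpo structure and compactness of \(\topel\), the case split on whether \(\sup A\in\Disk\) or \(\sup A=\topel\), separation by \(M_i\cup\{\topel\}\), and directedness with pointwise supremum \(\id_{\Disk^{\top}}\). Your Step~5, which checks that \(\Disk^{\top}\) is a continuous dcpo, is a small addition; the paper's proof leaves it implicit, though it is covered by Lemma~\ref{lemma:compact-top-continuous} with \(D=\Disk\).
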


\begin{proof}
Let \(D\subseteq\Disk^{\top}\) be directed. If \(\topel\in D\), then
\(\bigvee D=\topel\). Otherwise \(D\subseteq\Disk\), and its supremum
in \(\Disk\) is also its supremum in \(\Disk^{\top}\). Thus
\(\Disk^{\top}\) is a dcpo.

To see that \(\topel\) is compact, suppose that \(D\subseteq
\Disk^{\top}\) is directed and \(\topel\le\bigvee D\). Then
\(\bigvee D=\topel\). If \(\topel\notin D\), then \(D\subseteq\Disk\),
so its supremum belongs to \(\Disk\), contradicting
\(\bigvee D=\topel\). Hence \(\topel\in D\), and therefore
\(\topel\below\topel\).

For each \(i\in I\), the map \(\widehat f_i\) is monotone. We verify
that it preserves directed suprema. Let \(D\subseteq\Disk^{\top}\) be
directed. If \(\bigvee D\in\Disk\), then \(D\subseteq\Disk\), and Scott
continuity of \(f_i\) gives
\[
\widehat f_i\left(\bigvee D\right)
=
f_i\left(\bigvee D\right)
=
\bigvee_{x\in D}f_i(x)
=
\bigvee_{x\in D}\widehat f_i(x).
\]
If \(\bigvee D=\topel\), compactness of \(\topel\) implies that
\(\topel\in D\). Hence both sides of the corresponding equality are
\(\topel\). Thus \(\widehat f_i\) is Scott-continuous.

Since \(f_i\) is finitely separated from \(\id_{\Disk}\) by \(M_i\),
for every \(x\in\Disk\) there exists \(m\in M_i\) such that
\[
\widehat f_i(x)=f_i(x)\le m\le x.
\]
For \(x=\topel\), we have
\(\widehat f_i(\topel)=\topel\le\topel\le\topel\). Therefore
\(\widehat f_i\) is finitely separated from
\(\id_{\Disk^{\top}}\) by \(M_i\cup\{\topel\}\).

The family \((\widehat f_i)_{i\in I}\) is directed because
\((f_i)_{i\in I}\) is directed. Finally, for every \(x\in\Disk\),
\[
\bigvee_{i\in I}\widehat f_i(x)
=
\bigvee_{i\in I}f_i(x)
=
x,
\]
while \(\bigvee_{i\in I}\widehat f_i(\topel)=\topel\). Hence
\(\bigvee_{i\in I}\widehat f_i=\id_{\Disk^{\top}}\), so
\((\widehat f_i,M_i\cup\{\topel\})_{i\in I}\) is a finitely separating
approximate identity on \(\Disk^{\top}\). Therefore
\(\Disk^{\top}\) is an FS-domain.
\end{proof}

\subsection{Continuity of the Scott function space}

The preceding proposition places \(\Disk^{\top}\) within the scope of
Theorem~\ref{thm:function-space-continuous}. We therefore obtain the
following consequence immediately.

\begin{corollary}
\label{cor:tie-disk-top-continuous}
The Scott function space
\[
[T\to\Disk^{\top}]
\]
is a continuous dcpo.
\end{corollary}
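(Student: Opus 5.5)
This is a direct instance of Theorem~\ref{thm:function-space-continuous} with \(L=\Disk^{\top}\). The only hypothesis of that theorem is that the codomain be an FS-domain, so the proof reduces to verifying this for \(\Disk^{\top}\).

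The plan has three steps.

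\begin{enumerate}
\item By Proposition~\ref{prop:closed-disk-properties}(ii), \(\Disk\) is an FS-domain. Fix a finitely separating approximate identity \((f_i,M_i)_{i\in I}\) on \(\Disk\).
\item Proposition~\ref{prop:disk-top-properties} shows that the extended maps \(\widehat f_i\), together with the separating sets \(M_i\cup\{\topel\}\), form a finitely separating approximate identity on \(\Disk^{\top}\).
\item To know that \(\Disk^{\top}\) is an FS-domain in the sense of the definition, which requires a \emph{continuous} dcpo, we also need continuity of \(\Disk^{\top}\). This follows from Lemma~\ref{lemma:compact-top-continuous} applied with \(D=\Disk\), since \(\Disk\) is continuous by Proposition~\ref{prop:closed-disk-properties}(i).
\end{enumerate}

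Once \(\Disk^{\top}\) is known to be an FS-domain, Theorem~\ref{thm:function-space-continuous} applies with \(L=\Disk^{\top}\). Concretely, Theorem~\ref{thm:directed-waybelow-family} gives, for each \(h\in[T\to\Disk^{\top}]\), the directed family \((\widehat f_i\circ h)_{i\in I}\). Each member satisfies \(\widehat f_i\circ h\below h\) by Lemma~\ref{lemma:composition-waybelow}, and the family has supremum \(h\). Hence \(h\) is a directed supremum of elements way below it, and \([T\to\Disk^{\top}]\) is continuous.

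There is no substantive obstacle. The only point needing care is the continuity of \(\Disk^{\top}\) in step~3, since Proposition~\ref{prop:disk-top-properties} as stated asserts only that \(\Disk^{\top}\) is a dcpo. Lemma~\ref{lemma:compact-top-continuous} fills this gap; its proof does not use pointedness and applies verbatim to \(\Disk\).
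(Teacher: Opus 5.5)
Your proposal is correct and follows the paper's own argument: obtain that $\Disk^{\top}$ is an FS-domain from Proposition~\ref{prop:disk-top-properties}, then apply Theorem~\ref{thm:function-space-continuous} with $L=\Disk^{\top}$. Your extra step securing continuity of $\Disk^{\top}$ via Lemma~\ref{lemma:compact-top-continuous} is valid but not strictly needed. Continuity already follows from the approximate identity itself: Lemma~\ref{lemma:pointwise-waybelow} gives $\widehat f_i(x)\below x$, and these values form a directed family with supremum $x$.
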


\begin{proof}
By Proposition~\ref{prop:disk-top-properties},
\(\Disk^{\top}\) is an FS-domain. The conclusion follows directly from
Theorem~\ref{thm:function-space-continuous}.
\end{proof}

\subsection{Failure of the FS property}

The continuity result established in Corollary~\ref{cor:tie-disk-top-continuous}
cannot, in general, be strengthened to preservation of the FS property.
In fact, the obstruction is completely independent of the closed-disk
domain. It arises from the fact that Plotkin's tie occurs as a
Scott-continuous retract of every Scott function space having \(T\) as
its codomain.

\begin{theorem}
\label{thm:function-space-not-fs}
Let \(X\) be a nonempty FS-domain. Then the Scott function space
\[
[X\to T]
\]
is not an FS-domain.
\end{theorem}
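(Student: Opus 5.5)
The plan is to realise \(T\) as a Scott-continuous retract of \([X\to T]\) and then use the fact that the class of FS-domains is closed under Scott-continuous retracts (recalled in Section~\ref{subsec:retracts}). If \([X\to T]\) were an FS-domain, then \(T\) would be one as well, contradicting Theorem~\ref{thm:tie-not-fs-rb}.

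For the retraction I will use constant maps and evaluation. Define
\[
s:T\to[X\to T],\qquad s(t)=\bar t,
\]
where \(\bar t\) is the constant map with value \(t\); this is Scott-continuous into \([X\to T]\). Since \(X\) is nonempty, fix a point \(x_0\in X\); because \(X\) is an FS-domain it is pointed in the paper's usage, so we may take \(x_0=\bot_X\), although any point works. Define
\[
r:[X\to T]\to T,\qquad r(g)=g(x_0).
\]

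Both maps are Scott-continuous. For \(s\): it is monotone, and for directed \(A\subseteq T\) the pointwise supremum of \(\{\bar t:t\in A\}\) is the constant map with value \(\bigvee A\). This uses the fact that directed suprema in \([X\to T]\) are computed pointwise (Section~\ref{subsec:scott-function-spaces}). For \(r\): evaluation at a fixed point is monotone and preserves directed suprema, again because suprema are pointwise. Finally, \(r(s(t))=\bar t(x_0)=t\), so \(r\circ s=\id_T\) and \(T\) is a Scott-continuous retract of \([X\to T]\).

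It remains to check that the retract-closure theorem applies. As stated in Section~\ref{subsec:retracts}, it concerns continuous dcpos, and \([X\to T]\) is continuous by Theorem~\ref{thm:tie-target-continuous}. In any case the hypothesis under test is that \([X\to T]\) is an FS-domain, which already includes continuity. I expect the only real obstacle to be making sure the retract closure is available for arbitrary Scott-continuous retractions rather than only for deflation-like ones. The standard argument covers this: if \((f_i)\) is a finitely separating approximate identity on \(E\), then \((r\circ f_i\circ s)\) is one on \(T\), separated by the finite sets \(r[M_i]\). Indeed, given \(f_i(s(t))\le m\le s(t)\) with \(m\in M_i\), applying \(r\) gives \(r f_i s(t)\le r(m)\le t\). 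Directedness of this family and the identity \(\bigvee_i r f_i s=r\circ s=\id_T\) follow from continuity of \(r\). I would include this short verification inline to make the proof self-contained.
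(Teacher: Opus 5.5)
Your proposal is correct and is essentially the paper's proof: the constant-map section \(t\mapsto\bar t\) and evaluation at a chosen point \(x_0\in X\) exhibit \(T\) as a Scott-continuous retract of \([X\to T]\), so closure of FS-domains under retracts would make \(T\) an FS-domain, which it is not. Your inline check that \((r\circ f_i\circ s)_{i}\), separated by the finite sets \(r[M_i]\), is a finitely separating approximate identity on the (algebraic, hence continuous) domain \(T\) correctly makes the cited closure property self-contained; as you note, any \(x_0\in X\) works, so the remark about pointedness is unnecessary.
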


\begin{proof}
Choose an element \(x_0\in X\). Define
\[
c:T\to[X\to T],
\qquad
c(t)(x):=t
\]
for every \(t\in T\) and \(x\in X\). Thus \(c(t)\) is the constant map
with value \(t\). Since constant maps preserve directed suprema, \(c\)
is Scott-continuous.

Next define the evaluation map
\[
\operatorname{ev}_{x_0}:[X\to T]\to T,
\qquad
\operatorname{ev}_{x_0}(h):=h(x_0).
\]
Directed suprema in Scott function spaces are computed pointwise, so
\(\operatorname{ev}_{x_0}\) is Scott-continuous.

For every \(t\in T\),
\[
(\operatorname{ev}_{x_0}\circ c)(t)
=
\operatorname{ev}_{x_0}(c(t))
=
c(t)(x_0)
=
t.
\]
Hence
\[
\operatorname{ev}_{x_0}\circ c
=
\id_T,
\]
showing that \(T\) is a Scott-continuous retract of
\([X\to T]\).

Suppose that \([X\to T]\) were an FS-domain. Since the class of
FS-domains is closed under Scott-continuous retracts, it would follow
that \(T\) is also an FS-domain. This contradicts
Proposition~\ref{prop:plotkin-tie-properties}. Therefore
\([X\to T]\) is not an FS-domain.
\end{proof}

\begin{corollary}
\label{cor:disk-top-tie-not-fs}
The Scott function space
\[
[\Disk^{\top}\to T]
\]
is a continuous dcpo but is not an FS-domain.
\end{corollary}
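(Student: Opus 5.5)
The corollary combines two results already proved, so the plan is to check that \(\Disk^{\top}\) meets the hypotheses of both and then quote them. First, Proposition~\ref{prop:disk-top-properties} shows that \(\Disk^{\top}\) is an FS-domain. It is continuous because it is obtained from the continuous domain \(\Disk\) by adjoining a compact top; Lemma~\ref{lemma:compact-top-continuous} gives this for any continuous \(D\). It is also nonempty, since it contains \(\mathbb{R}^2\) and \(\topel\).

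For continuity, I would apply Theorem~\ref{thm:tie-target-continuous} with \(D=\Disk^{\top}\), which gives directly that \([\Disk^{\top}\to T]\) is a continuous dcpo. This should be noted explicitly, because Corollary~\ref{cor:tie-disk-top-continuous} concerns the reversed space \([T\to\Disk^{\top}]\) and does not give this conclusion. The argument behind the theorem is the local truncation maps \(\tau_N\circ h\circ \widehat f_i\), built from the approximate identity of Proposition~\ref{prop:disk-top-properties}.

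For the failure of the FS property, I would apply Theorem~\ref{thm:function-space-not-fs} with \(X=\Disk^{\top}\). Fix any \(x_0\), for instance \(x_0=\topel\). The constant-map embedding \(c\) and the evaluation \(\operatorname{ev}_{x_0}\) make \(T\) a Scott-continuous retract of \([\Disk^{\top}\to T]\). FS-domains are closed under Scott-continuous retracts, and \(T\) is not an FS-domain (Theorem~\ref{thm:tie-not-fs-rb}), so \([\Disk^{\top}\to T]\) is not an FS-domain.

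I expect no real obstacle, since each step only instantiates an earlier result. The one point to watch is using the right direction of the function space for the continuity claim, that is Theorem~\ref{thm:tie-target-continuous} and not Corollary~\ref{cor:tie-disk-top-continuous}. As the retraction argument shows, the non-RB nature of \(\Disk\) plays no role in the failure of the FS property: \(\Disk^{\top}\) serves only as a concrete nonempty FS source.
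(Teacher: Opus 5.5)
Your proof is correct and is essentially the paper's own argument: Proposition~\ref{prop:disk-top-properties} makes \(\Disk^{\top}\) an FS-domain, Theorem~\ref{thm:tie-target-continuous} gives continuity of \([\Disk^{\top}\to T]\), and Theorem~\ref{thm:function-space-not-fs} rules out the FS property, since it exhibits \(T\) as a Scott-continuous retract via constant maps and evaluation. You are also right that Corollary~\ref{cor:tie-disk-top-continuous} concerns the reversed space \([T\to\Disk^{\top}]\) and cannot be used here, and the paper likewise invokes Theorem~\ref{thm:tie-target-continuous} for the continuity claim.
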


\begin{proof}
Proposition~\ref{prop:disk-top-properties} shows that
\(\Disk^{\top}\) is an FS-domain. Hence
\([\Disk^{\top}\to T]\) is a continuous dcpo by
Theorem~\ref{thm:tie-target-continuous}
and is not an FS-domain by
Theorem~\ref{thm:function-space-not-fs}.
\end{proof}

\subsection{The Closed-Disk--Plotkin-Tie Contrast}

The preceding results reveal a clear distinction between continuity and
the FS property for Scott function spaces. Although
\(\Disk^{\top}\) is an FS-domain, the Scott function space
\[
[\Disk^{\top}\to T]
\]
is a continuous dcpo but is not an FS-domain. Thus, the continuity
established in Theorem~\ref{thm:tie-target-continuous} cannot, in general, be
strengthened to preservation of the FS property.

This contrast completes the picture developed throughout the paper.
Section~\ref{sec:tie-target} established that Scott function spaces of
the form \([X\to T]\) are continuous whenever \(X\) is an FS-domain,
while Section~\ref{sec:tie-source} established that Scott function
spaces of the form \([T\to L]\) are continuous whenever \(L\) is an
FS-domain. These two positive results show that Plotkin's tie enjoys
remarkable continuity-preserving properties, whether it appears as the
target or as the source of Scott-continuous maps.

The present section demonstrates, however, that continuity is the
strongest conclusion that can be expected in general. Even when the
source is the Lawson closed-disk domain, one of the most natural and
well-studied examples of an FS-domain, the resulting Scott function
space need not itself be an FS-domain.

Consequently, the approximation mechanism developed in this paper
identifies the precise structural ingredient required for continuity of
Scott function spaces, but no more. The Lawson closed-disk domain and
Plotkin's tie together provide a concrete and conceptually transparent
witness that continuity and the FS property are genuinely distinct
notions in the theory of Scott function spaces.

\section{Consequences, Boundaries, and Open Questions}
\label{sec:open-questions}

The principal contribution of this paper is to establish two
complementary continuity theorems for Scott function spaces involving
Plotkin's tie. Together with the Lawson closed-disk example, these
results clarify the relationship between continuity and finite
separation, and identify a precise boundary beyond which the present
approximation methods no longer extend.

Rather than concluding with a summary, we close by discussing the
mathematical consequences of the present work, the limitations of the
methods developed herein, and several natural directions for future
research.

\subsection{Mathematical consequences}

The results obtained in this paper establish that finite separation
provides the approximation required to guarantee continuity of Scott
function spaces in two complementary situations. When Plotkin's tie
appears as the target, finite-layer truncation maps produce directed
families of way-below approximants. When Plotkin's tie appears as the
source, finitely separating approximate identities on the codomain
generate the required approximants by composition.

These two arguments are structurally different. In the first, the
approximating maps arise from the finite-level geometry of the target.
In the second, they arise from the finite-separation structure of the
codomain together with the two-branch geometry of the source. Their
common effect is nevertheless the same: every element of the relevant
Scott function space is recovered as the directed supremum of elements
way below it.

The closed-disk--Plotkin-tie example shows, however, that continuity is
strictly weaker than preservation of the FS property. In particular,
although \(\Disk^{\top}\) is an FS-domain, the Scott function space
\([\Disk^{\top}\to T]\) is continuous but is not an FS-domain.
Consequently, the existence of sufficiently many way-below
approximants in a Scott function space does not imply that those
approximants can be generated by a finitely separating approximate
identity on the function space itself.

Continuity and finite separation should therefore be regarded as
genuinely different approximation phenomena. The results of this paper
show that finite separation may induce continuity under function-space
formation without being preserved by that formation.

\subsection{Boundaries of the present methods}

The proofs developed in this paper rely on two fundamentally different
approximation mechanisms.

\begin{enumerate}
\item
Finite-layer truncation maps arising from the order structure of
Plotkin's tie when it appears as the target.

\item
Finitely separating approximate identities arising from the
FS-structure of the codomain when Plotkin's tie appears as the source.
\end{enumerate}

The first mechanism depends strongly on the fact that the infinite
structure of Plotkin's tie is exhausted by finitely many initial levels
together with two convergent branches. The second depends on the same
two-branch structure in a different way: it allows infinitely many
pointwise way-below estimates to be reduced to finitely many initial
conditions and two controlled tails.

Neither argument extends automatically to an arbitrary continuous
domain. A general target need not admit finite-level truncation maps
with the required way-below properties, while a general source may
contain infinitely many independent directions that cannot be
controlled by finitely many pointwise estimates. Similarly, continuity
of a codomain alone does not provide the finitely separating maps used
in the second argument.

The present methods should therefore be viewed as identifying two
successful approximation principles rather than as providing a general
function-space theorem. They reveal sufficient structural conditions
for continuity, but they do not yet characterize those conditions.

The negative result of the preceding section marks a further boundary.
Even when one of these mechanisms succeeds in establishing continuity,
it need not produce a finitely separating approximate identity on the
resulting function space. Thus the passage from local or externally
supplied approximants to an intrinsic FS-structure remains a genuinely
stronger requirement.

\subsection{Open questions}

The results of this paper suggest several natural problems.

\begin{question}
\label{question:target-characterization}
Characterize those continuous domains \(X\) for which the Scott
function space
\[
[X\to T]
\]
is continuous.
\end{question}

Theorem~\ref{thm:tie-target-continuous} gives the FS property of \(X\) as a
sufficient condition. It is not clear whether this hypothesis can be
weakened, nor which intrinsic property of \(X\) is actually detected
by the finite-layer truncation argument.

\begin{question}
\label{question:source-characterization}
Characterize those continuous domains \(L\) for which the Scott
function space
\[
[T\to L]
\]
is continuous.
\end{question}

Theorem~\ref{thm:function-space-continuous} gives a positive answer
when \(L\) is an FS-domain. The proof uses finite separation to obtain
pointwise way-below estimates and the geometry of \(T\) to promote them
to function-space estimates. It remains open whether finite separation
is necessary, or whether a substantially weaker approximation property
of \(L\) suffices.

\begin{question}
\label{question:intermediate-property}
Is there an intrinsic approximation property, strictly weaker than the
FS property but stronger than continuity, that characterizes the
continuity-preserving behaviour exhibited by the domains considered in
this paper?
\end{question}

Such a property would need to explain how a domain can supply enough
approximants to make a Scott function space continuous without forcing
the function space itself to admit a finitely separating approximate
identity.

\begin{conjecture}
\label{conj:unified-approximation-principle}
There exists an intrinsic approximation principle for continuous
domains that simultaneously generalizes finite-layer truncation maps
and finitely separating approximate identities, and whose presence
characterizes continuity of the corresponding Scott function spaces.
\end{conjecture}

The conjectured principle should not merely assert the existence of
way-below approximants pointwise. It should encode a method for
constructing, from the order-theoretic structure of the source and
target, a directed family of Scott-continuous maps lying way below a
given function and having that function as supremum.

In the target case, the principle should recover finite-layer
truncations of Plotkin's tie. In the source case, it should recover
composition with finitely separating approximating maps, together with
a finite-control argument for the geometry of the source. A successful
formulation would place the two main continuity theorems of this paper
within a single approximation theory.

\subsection*{Final remarks}

The results presented here show that Plotkin's tie occupies a remarkable
position in the theory of Scott function spaces. Whether appearing as
the source or as the target, it admits approximation mechanisms
sufficient to guarantee continuity under one-sided FS assumptions. At
the same time, the Lawson closed-disk example demonstrates that these
mechanisms reach a natural boundary: continuity can be preserved
without preserving the FS property.

The two continuity proofs therefore appear to be manifestations of a
more general phenomenon. In each case, the essential task is not to
construct an FS-structure on the function space, but to construct enough
Scott-continuous approximants below each function to recover it as a
directed supremum. The origin and form of those approximants differ,
but their function-space role is identical.

This suggests that the central problem is no longer simply whether
FS-domains are preserved by Scott function-space formation. The deeper
question is to identify the order-theoretic approximation structures
that are precisely strong enough to ensure continuity of Scott function
spaces. Conjecture~\ref{conj:unified-approximation-principle} formulates
this possibility and points toward a broader theory in which the
results of the present paper would arise as two complementary special
cases.


\end{document}